\documentclass[11pt,a4paper,reqno]{amsart}
\usepackage{amsmath, amssymb, amsthm, graphicx, hyperref, cleveref, mathrsfs, euscript, upgreek, stmaryrd, leftidx, mathtools, pb-diagram, tikz-cd, pigpen, bbm, multirow, wrapfig, epsfig, epsf, epic, pdfsync, tikz, color, xcolor, xypic, indentfirst, textcomp, tcolorbox, fancyhdr, blindtext, appendix, relsize, multicol, cutwin}

\usepackage[margin=2cm]{geometry}
\usepackage{enumitem}
\usepackage[export]{adjustbox}

\usepackage{libertinus}
\usepackage[T1]{fontenc}

\pagestyle{fancy}
\fancyhf{}
\fancyhead[LE,RO]{\thepage}
\fancyhead[LO]{\nouppercase\rightmark}
\fancyhead[RE]{\nouppercase\leftmark}

\newtheorem*{namedthm}{\namedthmname}    	
\newcounter{namedthm}

\makeatletter
\newenvironment{named}[1]
{\def\namedthmname{#1}%
	\refstepcounter{namedthm}%
	\namedthm\def\@currentlabel{#1}}
{\endnamedthm}
\makeatother

\marginparwidth 0pt

\theoremstyle{plain}
\newtheorem{thm}{Theorem}[section]
\newtheorem{theorem}[thm]{Theorem}
\newtheorem{lemma}[thm]{Lemma}
\newtheorem{prop}[thm]{Proposition}
\newtheorem{corollary}[thm]{Corollary}
\theoremstyle{definition}
\newtheorem{definition}[thm]{Definition}

\newtheorem{example}[thm]{Example}

\newtheorem{remark}[thm]{Remark}
\numberwithin{equation}{section}

\newtheorem*{theorem*}{Theorem}

\NewDocumentCommand{\myparen}{m}{\textup{(#1)}}
\newcommand{\set}{\EuScript{S}\mathsf{et}}
\newcommand{\sset}{\mathsf{s}\EuScript{S}\mathsf{et}}

\newcommand*{\pb}{\mbox{\LARGE{$\mathrlap{\cdot}\lrcorner$}}}

\providecommand{\bysame}{\leavevmode\hbox to3em{\hrulefill}\thinspace}
\providecommand{\MR}{\relax\ifhmode\unskip\space\fi MR }
\providecommand{\MRhref}[2]{%
	\href{http://www.ams.org/mathscinet-getitem?mr=#1}{#2}
}

\begin{document}
	
\title{Reshaping limit diagrams and cofinality in higher category theory}
\author{Peng Du}

\begin{abstract}
	In this article, we present some results on (co)limits of diagrams in $\infty$-categories, as well as those in $(n, 1)$-categories. In particular, we deduce a way to reshape colimit diagrams into simplicial ones, and some characterisations of $n$-cofinality for functors between $\infty$-categories. Some basics on $n$-siftedness are also treated.
\end{abstract}

\maketitle
\makeatletter
\patchcmd{\@tocline}
{\hfil}
{\leaders\hbox{\,.\,}\hfil}{}{}
\makeatother
{\large {\tableofcontents}}

\section{Introduction}

In this article, we discuss some results in higher category theory. The first main theme is to reshape colimit diagrams into simplicial ones. Another one is $n$-cofinality, which will give a simplification of the shape if the diagrams are actually in $(n, 1)$-categories. As an application of $n$-cofinality, we also present some fundamental results on $n$-siftedness, extending the usual siftedness in ordinary category theory and the very useful notion of siftedness for $\infty$-categories.

The motivation and a prototypical example is the result \cite[Lemma 18.9.1]{Hir03}, which computes homotopy colimits of diagrams of the shape of the opposite to the category of simplices of another simplicial set, and reshape such diagrams to simplicial ones. On the other hand,  a (co)limit is a (co)equaliser of (co)products in any ordinary category. We seek for a common generalisation of these facts.

By means of computing limits by decomposing diagrams, we can give a quick and non-technical proof of such a result for (co)limits of diagrams of the shape $\mathbf{\Delta}_{/C}$ for a simplicial set $C$ in the $\infty$-categorical setting (\Cref{lim-catsimp}). Based on this and taking advantage of the well-developped $\infty$-categorical techniques, we can indeed formulate a common generalisation of these facts for $\infty$-categories, and find a shortcut to confirm it, which we summarise as our first main result, in the following form.
\begin{named}{\textsc{Main Theorem 1}}
	Let $C$ be a simplicial set and let $\EuScript{E}$ be an $\infty$-category. Let $F\colon C\to\EuScript{E}$ be a diagram. We have a natural equivalence
	\[\operatorname{colim}_{[r]\in\mathbf{\Delta}^{\rm op}}\coprod_{\sigma\in C_r}F(\sigma(0))\simeq\operatorname{colim}(C\xrightarrow{F}\EuScript{E})\]
	whenever all terms make sense, in which case, we have a colimit diagram
	\newcommand{\stackspacesemi}{4}
	\newcommand{\stackk}[2][.8cm]{\;\tikz[baseline, yshift=.65ex]%
		{\foreach \k [evaluate=\k as \r using (.5*#2+.5-\k)*\stackspacesemi] in {1,...,#2}{%
				\k{\draw[->](0,\r pt)--(#1,\r pt);}
		}}\;}
	\[
	\begin{tikzcd}
		\cdots\cdots\stackk{4}\coprod_{\sigma\in C_2}F(\sigma(0))\stackk{3}\coprod_{\sigma\in C_1}F(\sigma(0)) \stackk{2}\coprod_{\sigma\in C_0}F(\sigma(0)) \arrow[r]   & \operatorname{colim}(C\xrightarrow{F}\EuScript{E}).
	\end{tikzcd}\]
	
	If $\EuScript{E}$ is an $(n, 1)$-category, then the same holds if we truncate the diagram at level $n$:
	\[\operatorname{colim}_{[r]\in(\mathbf{\Delta}^{\leqslant n})^{\rm op}}\coprod_{\sigma\in C_r}F(\sigma(0))\simeq\operatorname{colim}(C\xrightarrow{F}\EuScript{E}).\]
\end{named}
It should be a folklore result and might already exists somewhere; I cannot find in the literature only due to my ignorance. 

To demonstrate, we only need a seemingly very weak special application of \Cref{lim-catsimp}, namely, for a simplicial set $C$, we can identify $\operatorname{colim}_{[n]\in\mathbf{\Delta}^{\rm op}}C_n$ with its homotopy type (\Cref{htyp-sset}).

The first part of the above theorem is given in \Cref{colimits-simplicial}. The last part is a consequence of \Cref{simp-ncofinal} together with our second main result (on $n$-cofinality), \Cref{$n$-cofinal-lim-pres}.
\begin{named}{\textsc{Main Theorem 2}}
	A functor $p\colon\EuScript{C}\to\EuScript{D}$ between small $\infty$-categories is right $n$-cofinal if and only if $p$ respects colimits in $(n, 1)$-categories: for every $(n, 1)$-category $\EuScript{E}$ and every functor $F\colon\EuScript{D}\to\EuScript{E}$, we have
	\[\operatorname{colim}p^*F\xrightarrow{\simeq}\operatorname{colim}F.\]
\end{named}
In fact, our \Cref{$n$-cofinal-lim-pres} characterises $n$-cofinality, another main theme of this article, in a few equivalent ways and generalises a form of Quillen's Theorem A (see e.g. \cite[Theorem 6.4.5]{CiHi} and \cite[Theorem 2.19]{COJ}). We then commence to a relatively thorough study in \S 6 on $n$-cofinality, largely base on this. Those familiar with ($\infty$-)cofinality can easily recognise its similarity with the corresponding $\infty$-categorical version as in \cite{HTT, kerodon} of Lurie. Such characterisation via respecting of (co)limits is useful --- for instance, one can easily deduce that a constant diagram in an $(n, 1)$-category indexed by an $n$-connective simplicial set has that constant object as its colimit (see \Cref{cofintopt-cst}); it is also easy to apply in practice, as we may see in later part of \S 6.

\vspace{3mm}

The \emph{essence} turns out to be the following: if an $\infty$-category is an $(n, 1)$-category, while it is correct to use notions $\infty$-categorically, we could truncate at level $n$ without losing anything, or should loosen condition to level $n$ (rather than up to level $\infty$) to have the correct notions for $(n, 1)$-categories. This also justifies our notion of being left/right $n$-cofinal in \Cref{$n$-cofinality}. See \Cref{lim-trunc-des} for further comments in this aspect.

As an illustration, a (non-empty small) $\infty$-category $\EuScript{C}$ is sifted if and only if, for every pair of objects $a, b\in\EuScript{C}$, the underlying simplicial set $\EuScript{C}_{a/}\times_{\EuScript{C}}\EuScript{C}_{b/}$ is weakly contractible (in the Kan-Quillen model structure on $\sset$); for an ordinary category (i.e. a $(1, 1)$-category) $\EuScript{C}$ to be $1$-sifted however, we only require $\EuScript{C}_{a/}\times_{\EuScript{C}}\EuScript{C}_{b/}$ to be (non-empty and) connected. One may infer that the correct notion of ($n$-)siftedness suits for the study of $(n, 1)$-categories should require $\EuScript{C}_{a/}\times_{\EuScript{C}}\EuScript{C}_{b/}$ to be (non-empty and) $n$-connective, for all objects $a, b\in\EuScript{C}$; this will in particular diminish some confusion like conflict of terminology as noted in \cite[Warning 5.5.8.2]{HTT}. We will discuss $n$-siftedness in \S 7. Our main result regarding $n$-siftedness is as follows (see \Cref{nsift-colim-pres}).
\begin{named}{\textsc{Main Theorem 3}}
	Let $\EuScript{C, D}$ be $(n, 1)$-categories with $\EuScript{C}$ cocomplete, let $F\colon\EuScript{C}\to\EuScript{D}$ be a functor. Then $F$ preserves $n$-sifted colimits if and only if $F$ preserves filtered colimits and $(\mathbf{\Delta}^{\leqslant n})^{\rm op}$-shaped colimits.
\end{named}

\vspace{3mm}

Let me emphasise again that these are largely folklore results and should be familiar by experts (but they do not bother to write down). A main reason for writing up this article is, on one hand, the author feels it hard to find a reference where such results are properly recorded; on the other, the author hopes that the results presented here will be useful to others, especially for novices in $\infty$-categories who are familiar with ordinary categories, hence would probably expect such folklore results like me.

\vspace{3mm}

Throughout, we use the language of $\infty$-category theory as established in \cite{HTT, kerodon}. In particular, we use Joyal's theory of quasi-categories as a model for $(\infty, 1)$-categories. Since notions and results of Lurie are used almost everywhere, we do not always spell out explicitly when we use them but will be easily identified by readers familiar with \cite{HTT}. The notation ${\rm Fun}^{\rm bla}(-, -)$ is to indicate the full subcategory of functors in ${\rm Fun}(-, -)$ that preserve certain operation or structure $\emph{bla}$ (e.g. ${\rm Fun}^{\operatorname{lim}}(\EuScript{C}, \EuScript{E})\subset{\rm Fun}(\EuScript{C}, \EuScript{E})$ consists of limit-preserving functors). We also make the convention that $\mathbf{\Delta}^{\leqslant n}=\varnothing$ if $n<0$, and  $\mathbf{\Delta}^{\leqslant n}=\mathbf{\Delta}$ if $n=\infty$.

\vspace{3mm}

{\noindent\bf Acknowledgements.} I thank Dustin Clausen for inspiring discussions on a few things in this article during a stay at Bristol, and especially for pointing out the work \cite{COJ}. I also thank Denis-Charles Cisinski for helping with the argument of \Cref{htyp-sset} here, and Daniel Gratzer for pointing out the reference \cite{HP22}. The author gratefully acknowledges the support of the EPSRC standard grant EP/T012625/1.

\section{Cofinality}

In this section, we recall some facts about cofinal functors between $\infty$-categories for later use.

Let $\EuScript{C, D}$ be small $\infty$-categories, we say that a functor $p\colon\EuScript{C}\to\EuScript{D}$ is \emph{right cofinal}, if for every object $d\in\EuScript{D}$, the simplicial set $\EuScript{C}_{d/}:=\EuScript{C}\times_{\EuScript{D}}\EuScript{D}_{d/}$ is weakly contractible (in the Kan-Quillen model structure on $\sset$); this notation could be misleading if $d$ is a common object of $\EuScript{C}$ and $\EuScript{D}$, e.g. if $p\colon\EuScript{C}\to\EuScript{D}$ is the embedding of a wide subcategory --- a better way might be to include $p$ in the notation. A functor is \emph{left cofinal} if its opposite is right cofinal.

Being left/right cofinal is invariant under categorical equivalences. A map is left anodyne if and only if it is a left cofinal monomorphism, a map is right anodyne if and only if it is a right cofinal monomorphism. A left cofinal map factors as a composition of a left anodyne map followed by a trivial Kan fibration, a right cofinal map factors as a composition of a right anodyne map followed by a trivial Kan fibration.

\begin{example}
	 \emph{The inclusion functor $\mathbf{\Delta}_{\rm s}\hookrightarrow\mathbf{\Delta}$ is left cofinal.} See \cite[Lemma 6.5.3.7]{HTT}.
	
	In fact, for $[m]\in\mathbf{\Delta}$, we have the following commutative diagram, with a pullback square on the right:
\begin{equation}\label{cof-simpidx}
	\begin{tikzcd}
		(\mathbf{\Delta}_{\rm s}^{\leqslant n})_{/[m]} \ar[r]    \ar[d, hook]      & \mathbf{\Delta}_{/[m]}\times_{\mathbf{\Delta}}\mathbf{\Delta}_{\rm s} \ar[r] \ar[d, hook]  \arrow[dr, phantom, "\pb", very near start]   & \mathbf{\Delta}_{\rm s} \arrow[d, hook]\\
		(\mathbf{\Delta}^{\leqslant n})_{/[m]}  \ar[r]    & \mathbf{\Delta}_{/[m]} \ar[r]      & \mathbf{\Delta},
	\end{tikzcd}
\end{equation}
	in which, \emph{the leftmost vertical arrow is right cofinal} (see the proof of \cite[Lemma 6.5.3.8]{HTT}) and \emph{the other two vertical inclusions are left cofinal} (for the middle vertical arrow, we use that the arrow $\mathbf{\Delta}_{/[m]}\to\mathbf{\Delta}$ is a cartesian fibration and that left cofinal maps are stable under pullback along cartesian fibrations --- cartesian fibrations are smooth). Here we write $(\mathbf{\Delta}_{\rm s}^{\leqslant n})_{/[m]}:=(\mathbf{\Delta}_{\rm s})_{/[m]}\times_{\mathbf{\Delta}_{\rm s}}\mathbf{\Delta}_{\rm s}^{\leqslant n}$ (so that the square on the left is not a pullback in general). Note that, if $m\leqslant n$, then $(\mathbf{\Delta}_{\rm s}^{\leqslant n})_{/[m]}\cong(\mathbf{\Delta}_{\rm s}^{\leqslant m})_{/[m]}\cong(\mathbf{\Delta}_{\rm s})_{/[m]}$ in fact has a final object.
\end{example}
\begin{example}\label{ex-rcofmaps}
For any $n\geqslant 0$, the inclusion $\Delta^0\xrightarrow{n=d^{n-1}\circ\cdots\circ d^1\circ d^0}\Delta^n$ is right anodyne (\cite[Lemma 4.1.15]{CiHi}). If $0<r\leqslant n$, the inclusion $\Delta^0\xhookrightarrow{n}\Lambda^n_r$ is right anodyne (\cite[Lemma 4.4.3]{CiHi}). So all three inclusions $\Delta^0\xhookrightarrow{n}\Lambda^n_r\hookrightarrow\Delta^n$ are right anodyne. Any map $\Delta^0\xrightarrow{a}C$ in $\sset$ yields a right anodyne map $\Delta^0\xrightarrow{a}C_{/a}$ (\cite[Corollary 4.3.8]{CiHi}). All these maps are right cofinal.
\end{example}

\section{Functor-fibration correspondences}

In this section, we state Lurie's straightening-unstraightening equivalence (\cite[Theorem 3.2.0.1]{HTT}), also known as Lurie-Grothendieck correspondence. It is a nice way to treat the subtle problem of describing ${\rm CAT}_{\infty}$-valued functors by means of (co)cartesian fibrations. This is to fix notations and to state it in a form easy for us to apply.

We do not repeat definition of (co)cartesian fibrations here, which can be found in \cite[Chapter 2]{HTT}. We only emphasise that the class of (co)cartesian fibrations between simplicial sets is stable under pullback and compositions ({\cite[Proposition 2.4.2.3]{HTT}}).

For a small $\infty$-category $\EuScript{C}$, we denote by ${\rm CAT}_{\infty/\EuScript{C}}^{\rm cart}, {\rm CAT}_{\infty/\EuScript{C}}^{\rm cocart}, {\rm CAT}_{\infty/\EuScript{C}}^{\rm bicart}$ the (non-full) $\infty$-subcategories of ${\rm CAT}_{\infty/\EuScript{C}}$ with objects respectively being cartesian fibration, cocartesian fibration, bicartesian fibration, over $\EuScript{C}$, and with morphisms the functors (over $\EuScript{C}$) preserving cartesian arrows, cocartesian arrows, bicartesian arrows, respectively. We have ${\rm CAT}_{\infty/\EuScript{C}^{\rm op}}^{\rm cart}={\rm CAT}_{\infty/\EuScript{C}}^{\rm cocart}, {\rm CAT}_{\infty/\EuScript{C}^{\rm op}}^{\rm cocart}={\rm CAT}_{\infty/\EuScript{C}}^{\rm cart}$.
\begin{theorem}\label{unsteq}
	Let $\EuScript{C}$ be a small $\infty$-category. We have \emph{unstraightening} equivalences
	\begin{equation}
		\begin{aligned}
			&{\rm Un}=\int^{\EuScript{C}}\colon{\rm Fun}(\EuScript{C}^{\rm op}, {\rm CAT}_{\infty})\xrightarrow{\sim}{\rm CAT}_{\infty/\EuScript{C}}^{\rm cart}, \\
			&{\rm Un}=\int_{\EuScript{C}}\colon{\rm Fun}(\EuScript{C}, {\rm CAT}_{\infty})\xrightarrow{\sim}{\rm CAT}_{\infty/\EuScript{C}}^{\rm cocart}\simeq{\rm CAT}_{\infty/\EuScript{C}^{\rm op}}^{\rm cart}.
		\end{aligned}
	\end{equation}
	
	The inverse \emph{straightening} are given by taking fibre $\infty$-categories. They are related by a commutative diagram
	\begin{equation}
		\begin{tikzcd}[column sep=2em]
			{\rm CAT}_{\infty/\EuScript{C}}^{\rm cart} \arrow[d, "{\rm op}"']	 \ar[r, "{\rm Str}"]  	&	{\rm Fun}(\EuScript{C}^{\rm op}, {\rm CAT}_{\infty}) \ar[r, "\int^{\EuScript{C}}"] \ar[d, "{\rm op}\circ"']  & {\rm CAT}_{\infty/\EuScript{C}}^{\rm cart} \arrow[d, "{\rm op}"]\\
			{\rm CAT}_{\infty/\EuScript{C}^{\rm op}}^{\rm cocart} \ar[r, "{\rm Str}"]  	&	{\rm Fun}(\EuScript{C}^{\rm op}, {\rm CAT}_{\infty})  \ar[r, "\int_{\EuScript{C}^{\rm op}}"]     & {\rm CAT}_{\infty/\EuScript{C}^{\rm op}}^{\rm cocart}
		\end{tikzcd}
	\end{equation}
	of equivalences, where the middle vertical arrow is given by composing with the auto-equivalence ${\rm op}\colon{\rm CAT}_{\infty}\to{\rm CAT}_{\infty}, \EuScript{D}\mapsto\EuScript{D}^{\rm op}$. So we have
	\[
	\left(\int^{\EuScript{C}}F\to\EuScript{C}\right)^{\rm op}\simeq\left(\int_{\EuScript{C}^{\rm op}}{\rm op}\circ F\to\EuScript{C}^{\rm op}\right)\in{\rm CAT}_{\infty/\EuScript{C}^{\rm op}}^{\rm cocart}\]
	for any $F\in{\rm Fun}(\EuScript{C}^{\rm op}, {\rm CAT}_{\infty})$.
	
	The unstraightening equivalences are compatible with base change$\colon$for a functor $f\colon\EuScript{C}\to\EuScript{D}$ of $\infty$-categories, we have a commutative diagram
	\[
	\begin{tikzcd}[column sep=2em]
		{\rm Fun}(\EuScript{D}, {\rm CAT}_{\infty}) \ar[r, "{\rm Un}"] \ar[d, "f^*"']  & {\rm CAT}_{\infty/\EuScript{D}}^{\rm cocart} \arrow[d, "f^*"]\\
		{\rm Fun}(\EuScript{C}, {\rm CAT}_{\infty})  \ar[r, "{\rm Un}"']     & {\rm CAT}_{\infty/\EuScript{C}}^{\rm cocart},
	\end{tikzcd}\]
	where the right vertical $f^*$ sends a cocartesian fibration $\EuScript{E}\to\EuScript{D}$ to $\EuScript{E}\times_{\EuScript{D}}\EuScript{C}\to\EuScript{C}$.
\end{theorem}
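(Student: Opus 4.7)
The plan is to assemble the statement from Lurie's foundational unstraightening theorem \cite[Theorem 3.2.0.1]{HTT}, together with two compatibility diagrams (the $\mathrm{op}$-exchange and base change) that are either already in \emph{loc.~cit.} or follow formally by checking fibres. Since the result is essentially a packaging of Lurie's work, the bulk of the ``proof'' should be invocation plus a careful check that the two compatibility diagrams commute as claimed.

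First, I would quote Lurie's unstraightening directly to get the two equivalences of the first displayed formula: one for cartesian fibrations out of $\EuScript{C}$, parametrised by $\mathrm{Fun}(\EuScript{C}^{\mathrm{op}}, \mathrm{CAT}_\infty)$, and the dual one for cocartesian fibrations, parametrised by $\mathrm{Fun}(\EuScript{C}, \mathrm{CAT}_\infty)$. The identification ${\rm CAT}_{\infty/\EuScript{C}}^{\rm cocart}\simeq{\rm CAT}_{\infty/\EuScript{C}^{\rm op}}^{\rm cart}$ is tautological: passing to the opposite simplicial set sends a cocartesian fibration $\EuScript{E}\to\EuScript{C}$ to a cartesian fibration $\EuScript{E}^{\mathrm{op}}\to\EuScript{C}^{\mathrm{op}}$, and this operation is an involution respecting cartesian/cocartesian morphisms (hence defines the desired equivalence of $\infty$-categories).

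For the $\mathrm{op}$-compatibility diagram, the strategy is to exploit the characterisation of straightening by fibres. Both composites $\int_{\EuScript{C}^{\mathrm{op}}}\circ(\mathrm{op}\circ)\circ\mathrm{Str}$ and $\mathrm{op}\circ\mathrm{id}$ are functors ${\rm CAT}_{\infty/\EuScript{C}}^{\rm cart}\to{\rm CAT}_{\infty/\EuScript{C}^{\rm op}}^{\rm cocart}$ and I would check that they agree on objects (a cartesian fibration $\EuScript{E}\to\EuScript{C}$ with fibre $\EuScript{E}_c$ over $c$ goes to its opposite fibration, whose fibre over $c\in\EuScript{C}^{\mathrm{op}}$ is $\EuScript{E}_c^{\mathrm{op}}$, matching what one gets by straightening, post-composing with $\mathrm{op}$, and unstraightening). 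Since the vertical arrows are all equivalences, agreement on fibres plus naturality in $\EuScript{C}$ gives the commutativity.

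For the base change square, I would cite the naturality in $\EuScript{C}$ of unstraightening \cite[\S 3.2]{HTT}: given $f\colon\EuScript{C}\to\EuScript{D}$, pullback of cocartesian fibrations along $f$ is known to correspond under $\mathrm{Un}$ to restriction $f^*$ of $\mathrm{CAT}_\infty$-valued functors. The verification again reduces to matching fibres, since the fibre of $\EuScript{E}\times_{\EuScript{D}}\EuScript{C}\to\EuScript{C}$ over $c\in\EuScript{C}$ is the fibre of $\EuScript{E}\to\EuScript{D}$ over $f(c)$, which is exactly $(f^*F)(c)$. The main (only) obstacle, such as it is, is ensuring the $\infty$-categorical coherence in the op-compatibility diagram rather than just a bijection on equivalence classes; but this is taken care of by the fact that straightening is itself an equivalence of $\infty$-categories, so it suffices to exhibit the natural transformation on objects and check it is an equivalence there.
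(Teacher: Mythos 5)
The paper gives no proof of this theorem at all: it is recorded purely as a restatement of \cite[Theorem 3.2.0.1]{HTT} to fix notation, and your proposal takes essentially the same route, deferring the two equivalences to Lurie and treating the op-compatibility and base-change squares as formal consequences. One caution: in the $\infty$-categorical setting, ``agreement on fibres'' does not by itself produce the required natural equivalence of functors (you must construct or cite the coherence data --- the base-change compatibility is part of Lurie's treatment in \cite[\S 3.2]{HTT}, while the op-compatibility is a genuinely non-trivial folklore statement rather than a fibrewise check) --- but since the paper itself asserts these compatibilities without proof, your proposal is in line with its approach.
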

For a functor $\varphi\colon\EuScript{C}\to{\rm CAT}_{\infty}$ corresponding to a cocartesian fibration $p\colon\EuScript{E}\to\EuScript{C}$ by unstraightening, we say that the cocartesian fibration $p$ is classified by the functor $\varphi\colon\EuScript{C}\to{\rm CAT}_{\infty}$.

We now give some examples that will be useful later.
\begin{example}
	Let $\EuScript{C, D}$ be $\infty$-categories, then the projection $\EuScript{C}\times\EuScript{D}\to\EuScript{C}$ is a bicartesian fibration, whose straightening are the constant ${\rm CAT}_{\infty}$-valued functors with value $\EuScript{D}$.
\end{example}
\begin{example}
	Let $\EuScript{C}$ be a small $\infty$-category, let $X\in\EuScript{P}(\EuScript{C})$. Then the projection $q\colon\EuScript{C}_{/X}\to\EuScript{C}$ is a right fibration classified by the functor $X\colon\EuScript{C}^{\rm op}\to\EuScript{S}$.
\end{example}
\begin{example}\label{catsimp-cart}
	Let $C$ be a simplicial set.
	\begin{enumerate}[label=(\arabic*)]
		\item The identity functor $\operatorname{id}\colon C\to C$
		is a bicartesian fibration, with all edges in $C$ being $\operatorname{id}$-bicartesian. It is classified by the constant functor
		\[
		\begin{aligned}
			\underline{*}\colon C&\to\EuScript{S},\\
			a&\mapsto *.
		\end{aligned}\]
		\item Let $\mathbf{\Delta}_{/C}$ be the category of simplices of $C$ (constructed relative to the inclusion functor $\mathbf{\Delta}\hookrightarrow\sset, [n]\mapsto\Delta^n$). The functor
		\[
		\begin{aligned}
			p\colon\mathbf{\Delta}_{/C}&\to\mathbf{\Delta},\\
			(\Delta^n\xrightarrow{\sigma}C)&\mapsto [n]
		\end{aligned}\]
		is a cartesian fibration, with all edges in $\mathbf{\Delta}_{/C}$ being $p$-cartesian. So $p^{\rm op}\colon(\mathbf{\Delta}_{/C})^{\rm op}\to\mathbf{\Delta}^{\rm op}$
		is a cocartesian fibration, with all edges in $(\mathbf{\Delta}_{/C})^{\rm op}$ $p^{\rm op}$-cocartesian. It is classified by the functor
		\[
		\begin{aligned}
			C\colon\mathbf{\Delta}^{\rm op}&\to\EuScript{S},\\
			[n]&\mapsto C_n.
		\end{aligned}\]
	\end{enumerate}
\end{example}
\begin{example}\label{ar-cart-fib}
Let $\EuScript{C}$ be an $\infty$-category, then the functor $d_0\colon\EuScript{C}^{\Delta^1}\to\EuScript{C}$ sending an arrow in $\EuScript{C}$ to its target is a cocartesian fibration (\cite[Corollary 2.4.7.12]{HTT} applied to the opposite of the identity functor of $\EuScript{C}$). An edge $\sigma$ in $\EuScript{C}^{\Delta^1}$ is $d_0$-cocartesian if and only if its source $d_1(\sigma)$ is an equivalence (as an edge in $\EuScript{C}$). Straightening $d_0\colon\EuScript{C}^{\Delta^1}\to\EuScript{C}$, we obtain a functor
\[\EuScript{C}\to\operatorname{CAT}_{\infty}, c\mapsto\EuScript{C}_{/c}.\]

If $\EuScript{C}$ admits fibered products, then the functor $d_0\colon\EuScript{C}^{\Delta^1}\to\EuScript{C}$ is also a cartesian fibration. An edge in $\EuScript{C}^{\Delta^1}$ is $d_0$-cartesian if and only if it is a cartesian square in $\EuScript{C}$ (\cite[Lemma 6.1.1.1]{HTT}). Straightening $d_0\colon\EuScript{C}^{\Delta^1}\to\EuScript{C}$, we obtain a functor
\[F_{\EuScript{C}}\colon\EuScript{C}^{\mathrm{op}}\to\operatorname{CAT}_{\infty}, c\mapsto\EuScript{C}_{/c}.\]
\end{example}

\section{Limits by decomposing diagrams}

In this section, we recall some results on limits and colimits in $\operatorname{Cat}_{\infty}$, with emphasis on the technique of decomposing diagrams (as in \cite[\S 2]{COJ}).
\begin{prop}\label{bas-crit-catcolim}
	Let $K$ be a simplicial set and let $\varphi\colon K^{\triangleright}\to\operatorname{Cat}_{\infty}$ be a diagram. Then the following are equivalent.
	\begin{enumerate}[label=\myparen{\arabic*}]
		\item $\varphi\colon K^{\triangleright}\to\operatorname{Cat}_{\infty}, i\mapsto\EuScript{C}(i)$ is a colimit diagram.
		\item For every $\EuScript{E}\in\operatorname{Cat}_{\infty}$, the functor $\operatorname{Map}_{\operatorname{Cat}_{\infty}}(\varphi, \EuScript{E})\colon(K^{\rm op})^{\triangleleft}\to\EuScript{S}$ is a limit diagram.
		\item For every $\EuScript{E}\in\operatorname{Cat}_{\infty}$, the functor $\operatorname{Fun}(\varphi, \EuScript{E})\colon(K^{\rm op})^{\triangleleft}\to\operatorname{Cat}_{\infty}$ is a limit diagram.
	\end{enumerate}
\end{prop}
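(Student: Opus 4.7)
The plan is to prove $(1)\Leftrightarrow(2)\Leftrightarrow(3)$. The first equivalence is the standard Yoneda-style characterisation of colimits inside an $\infty$-category, applied to $\operatorname{Cat}_{\infty}$; the second is a formal consequence of the relation between mapping spaces and functor $\infty$-categories, exploiting the cartesian closed structure of $\operatorname{Cat}_{\infty}$.

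For $(1)\Leftrightarrow(2)$, I would invoke the general fact that in any $\infty$-category $\EuScript{D}$, a cocone $\varphi\colon K^{\triangleright}\to\EuScript{D}$ is a colimit diagram if and only if, for every object $E\in\EuScript{D}$, the induced functor $\operatorname{Map}_{\EuScript{D}}(\varphi(-), E)\colon (K^{\rm op})^{\triangleleft}\to\EuScript{S}$ is a limit diagram (an immediate consequence of \cite[Theorem 4.2.4.1]{HTT}; equivalently, the Yoneda embedding of $\EuScript{D}$ reflects colimits). Specialising to $\EuScript{D}=\operatorname{Cat}_{\infty}$ and $E=\EuScript{E}$ yields exactly the equivalence $(1)\Leftrightarrow(2)$.

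For $(3)\Rightarrow(2)$, I would use that the maximal-subgroupoid functor $(-)^{\simeq}\colon\operatorname{Cat}_{\infty}\to\EuScript{S}$ is right adjoint to the inclusion $\EuScript{S}\hookrightarrow\operatorname{Cat}_{\infty}$, hence preserves limits, together with the natural identification $\operatorname{Map}_{\operatorname{Cat}_{\infty}}(\EuScript{C}, \EuScript{E})\simeq\operatorname{Fun}(\EuScript{C}, \EuScript{E})^{\simeq}$. Applying $(-)^{\simeq}$ to the limit diagram $\operatorname{Fun}(\varphi, \EuScript{E})$ produces the limit diagram $\operatorname{Map}_{\operatorname{Cat}_{\infty}}(\varphi, \EuScript{E})$.

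For the converse $(2)\Rightarrow(3)$, which is the only direction carrying real content, I would again apply Yoneda, this time inside $\operatorname{Cat}_{\infty}$: a diagram $(K^{\rm op})^{\triangleleft}\to\operatorname{Cat}_{\infty}$ is a limit diagram iff, for every $X\in\operatorname{Cat}_{\infty}$, mapping out of $X$ yields a limit diagram in $\EuScript{S}$. By cartesian closedness of $\operatorname{Cat}_{\infty}$, we have a natural equivalence
$$\operatorname{Map}_{\operatorname{Cat}_{\infty}}(X, \operatorname{Fun}(\EuScript{C}, \EuScript{E}))\simeq\operatorname{Map}_{\operatorname{Cat}_{\infty}}(\EuScript{C}, \operatorname{Fun}(X, \EuScript{E}))$$
in $\EuScript{C}\in\operatorname{Cat}_{\infty}$; applied diagramatically to $\varphi$, the right-hand side is a limit diagram by hypothesis (2) applied to the $\infty$-category $\operatorname{Fun}(X, \EuScript{E})\in\operatorname{Cat}_{\infty}$. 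The only bookkeeping is to make the naturality of that adjunction equivalence precise so that it applies to the whole diagram $\varphi$ rather than a single object; this is routine, and I expect it to be the main (modest) technical point, handled for instance by observing that both sides of the displayed equivalence are the mapping spaces of representable functors on $\operatorname{Cat}_{\infty}$ and so transform diagrams in $\operatorname{Cat}_{\infty}$ coherently.
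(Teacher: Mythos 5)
Your proposal is correct and follows essentially the same route as the paper: (1)$\Leftrightarrow$(2) by the definition of colimits via mapping spaces, (3)$\Rightarrow$(2) because the core functor $(-)^{\simeq}$ preserves limits, and (2)$\Rightarrow$(3) by testing against an arbitrary $\EuScript{F}\in\operatorname{Cat}_{\infty}$ and using the exponential identity $\operatorname{Fun}(\EuScript{F}, \operatorname{Fun}(\varphi, \EuScript{E}))\cong\operatorname{Fun}(\varphi, \operatorname{Fun}(\EuScript{F}, \EuScript{E}))$ before applying (2) to the $\infty$-category $\operatorname{Fun}(\EuScript{F}, \EuScript{E})$. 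The naturality worry you flag at the end is handled in the paper by noting that the exponential identity is a literal isomorphism of simplicial sets, so it applies to the whole diagram with no further coherence bookkeeping.
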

\begin{proof}
	The equivalence of (1) and (2) is by definition, (3)$\Rightarrow$(2) since the functor $\operatorname{Cat}_{\infty}\to\EuScript{S}, \EuScript{C}\mapsto\EuScript{C}^{\simeq}$ preserves limits.
	
	(2)$\Rightarrow$(3): we need to show that, for every $\EuScript{F}\in\operatorname{Cat}_{\infty}$, the functor $\operatorname{Map}_{\operatorname{Cat}_{\infty}}(\EuScript{F}, \operatorname{Fun}(\varphi, \EuScript{E}))\colon(K^{\rm op})^{\triangleleft}\to\EuScript{S}$ is a limit diagram. But $\operatorname{Fun}(\EuScript{F}, \operatorname{Fun}(\varphi, \EuScript{E}))=\operatorname{Fun}(\varphi, \operatorname{Fun}(\EuScript{F}, \EuScript{E}))$, so the result follows by taking cores and (2).
\end{proof}

The following result describes limits and colimits in $\operatorname{Cat}_{\infty}$. See \cite[Corollaries 3.3.3.2 and 3.3.4.3]{HTT} for proofs.
\begin{theorem}\label{lim-catinf-op}
	Let $K$ be a simplicial set, let $\varphi\colon K^{\mathrm{op}}\to\operatorname{Cat}_{\infty}$ be a functor classifying a cartesian fibration $p\colon X=\int^K\varphi\to K$. Then $\operatorname{lim}\varphi$ is a model of cartesian sections of $p$, and $\operatorname{colim}\varphi$ is a localisation of $X$ by $p$-cartesian edges.
	
	If $K$ is an $\infty$-category, then
	\[\operatorname{lim}\varphi\simeq\Gamma_{\rm cart}(p)\simeq\operatorname{Fun}_{/K}^{\rm cart}(K, X)\]
	consists of functors that send every edge in $K$ to a $p$-cartesian edge in $X$.
\end{theorem}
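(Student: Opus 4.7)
The plan is to deduce both identifications from the unstraightening equivalence of \Cref{unsteq} together with Yoneda in $\operatorname{Cat}_\infty$. The starting observation is that, for every $\EuScript{E}\in\operatorname{Cat}_\infty$, the constant functor $\underline{\EuScript{E}}\colon K^{\rm op}\to\operatorname{Cat}_\infty$ unstraightens to the projection $K\times\EuScript{E}\to K$, and an edge of $K\times\EuScript{E}$ is cartesian over $K$ if and only if its $\EuScript{E}$-component is an equivalence. I will use that unstraightening is fully faithful, so that mapping spaces of natural transformations agree with mapping spaces in $\operatorname{CAT}_{\infty/K}^{\rm cart}$, and that the assignment $\EuScript{E}\mapsto(K\times\EuScript{E}\to K)$ can be made natural in $\EuScript{E}$.

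For the limit, Yoneda reduces one to identifying $\operatorname{Map}(\EuScript{E},\operatorname{lim}\varphi)\simeq\operatorname{Map}_{\operatorname{Fun}(K^{\rm op},\operatorname{Cat}_\infty)}(\underline{\EuScript{E}},\varphi)$ naturally in $\EuScript{E}$. After unstraightening this becomes the space of maps $K\times\EuScript{E}\to X$ over $K$ that preserve cartesian edges. Transposing along $\operatorname{Fun}(K\times\EuScript{E},X)\simeq\operatorname{Fun}(\EuScript{E},\operatorname{Fun}(K,X))$ and cutting down to morphisms over $K$ gives $\operatorname{Map}(\EuScript{E},\operatorname{Fun}_{/K}(K,X))$; the cartesian-preservation condition translates exactly into the requirement that the transpose lands in $\operatorname{Fun}_{/K}^{\rm cart}(K,X)$, because for every $e\in\EuScript{E}$ and every edge $\alpha$ of $K$ the pair $(\alpha,\operatorname{id}_e)$ is cartesian in $K\times\EuScript{E}$, while stability of cartesian edges under composition with fibrewise equivalences handles the converse. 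Hence $\operatorname{lim}\varphi\simeq\operatorname{Fun}_{/K}^{\rm cart}(K,X)=\Gamma_{\rm cart}(p)$; when $K$ is already an $\infty$-category, $\operatorname{Fun}_{/K}^{\rm cart}(K,X)$ is literally the full subcategory of sections of $p$ sending every edge of $K$ to a $p$-cartesian edge of $X$.

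For the colimit, the same mechanism applied to $\operatorname{Map}_{\operatorname{Fun}(K^{\rm op},\operatorname{Cat}_\infty)}(\varphi,\underline{\EuScript{E}})$ gives
\[\operatorname{Map}(\operatorname{colim}\varphi,\EuScript{E})\simeq\operatorname{Map}_{\operatorname{CAT}_{\infty/K}^{\rm cart}}(X,K\times\EuScript{E}).\]
A morphism $X\to K\times\EuScript{E}$ over $K$ is the same data as a functor $f\colon X\to\EuScript{E}$, and the cartesian-preservation condition says exactly that $f$ sends every $p$-cartesian edge of $X$ to an equivalence in $\EuScript{E}$. By the universal property of localisation in $\operatorname{Cat}_\infty$, this space is $\operatorname{Map}(X[W^{-1}],\EuScript{E})$, where $W$ denotes the class of $p$-cartesian edges; Yoneda then gives $\operatorname{colim}\varphi\simeq X[W^{-1}]$.

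The main obstacle lies in making precise, inside $\operatorname{Cat}_\infty$, the exponential adjunction $K\times(-)\dashv\operatorname{Fun}_{/K}(K,-)$ on the slice and the universal property of localisation, and in arranging all identifications naturally in $\EuScript{E}$ so that Yoneda applies. Once unstraightening is treated functorially in the target variable, so that $\underline{(-)}$ and $(K\times -\to K)$ match coherently, these verifications become formal; this is exactly what is carried out in Lurie's proofs of \cite[Corollaries 3.3.3.2 and 3.3.4.3]{HTT}.
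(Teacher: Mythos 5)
Your argument is correct, but note that the paper does not actually prove this statement: the text immediately preceding it simply defers to \cite[Corollaries 3.3.3.2 and 3.3.4.3]{HTT}, so the real comparison is with Lurie's proofs. Those are model-categorical: Lurie works in the cartesian (marked) model structure, identifies the homotopy limit of the associated diagram of fibrant marked simplicial sets with the mapping object of cartesian sections, and identifies the homotopy colimit over a point with $X$ marked by its $p$-cartesian edges, whose fibrant replacement is precisely the localisation. Your route repackages this model-independently: (co)represent the (co)limit by mapping spaces against the constant diagram, transport across the unstraightening equivalence of \Cref{unsteq} using that $\underline{\EuScript{E}}$ unstraightens to $K\times\EuScript{E}\to K$ with cartesian edges exactly those whose $\EuScript{E}$-component is invertible, and then read off the universal properties of $\operatorname{Fun}^{\rm cart}_{/K}(K,X)$ (via the exponential adjunction on the slice) and of the Dwyer--Kan localisation $X[W^{-1}]$. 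This is cleaner to state, and your handling of the cartesian-preservation condition is right in both directions (the factorisation $(\alpha,\epsilon)\simeq(\alpha,\operatorname{id})\circ(\operatorname{id},\epsilon)$, plus the facts that equivalences are $p$-cartesian and that $p$-cartesian edges compose, settles the converse). What your sketch buys in transparency it pays for in the two inputs you correctly flag: the identification $\int^{K}\underline{(-)}\simeq K\times(-)$ must be made coherently natural in $\EuScript{E}$ (this follows from the base-change compatibility of unstraightening along $K\to\Delta^0$ recorded in \Cref{unsteq}), and the existence plus universal property of the localisation of a marked simplicial set inside $\operatorname{Cat}_{\infty}$ is itself usually established through the marked model structure --- so the argument is not fully independent of Lurie's machinery, but it is a legitimate and arguably more conceptual derivation of the same result.
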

Here is a dual version.
\begin{theorem}\label{lim-catinf}
	Let $K$ be a simplicial set, let $\varphi\colon K\to\operatorname{Cat}_{\infty}$ be a functor classifying a cocartesian fibration $p\colon X=\int_K\varphi\to K$. Then $\operatorname{lim}\varphi$ is a model of cocartesian sections of $p$, and $\operatorname{colim}\varphi$ is a localisation of $X$ by $p$-cocartesian edges.
	
	If $K$ is an $\infty$-category, then
	\[\operatorname{lim}\varphi\simeq\Gamma_{\rm cocart}(p)\simeq\operatorname{Fun}_{/K}^{\rm cocart}(K, X)\]
	consists of functors that send every edge in $K$ to a $p$-cocartesian edge in $X$.
\end{theorem}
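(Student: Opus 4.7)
The plan is to deduce this from \Cref{lim-catinf-op} by passing to opposites, exploiting the compatibility between straightening and $\operatorname{op}$ recorded in \Cref{unsteq}.

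First, I would set $\psi := \operatorname{op}\circ\varphi\colon K\to\operatorname{Cat}_{\infty}$ and apply \Cref{lim-catinf-op} with $K$ replaced by $K^{\rm op}$ and with the input functor $(K^{\rm op})^{\rm op}=K\to\operatorname{Cat}_{\infty}$ taken to be $\psi$. The resulting cartesian fibration $\int^{K^{\rm op}}\psi\to K^{\rm op}$ is, by the compatibility square in \Cref{unsteq}, canonically identified with $p^{\rm op}\colon X^{\rm op}\to K^{\rm op}$; moreover, an edge of $X^{\rm op}$ is $p^{\rm op}$-cartesian precisely when the corresponding edge of $X$ is $p$-cocartesian. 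Since $\operatorname{op}\colon\operatorname{Cat}_{\infty}\to\operatorname{Cat}_{\infty}$ is an auto-equivalence, it preserves both limits and colimits, giving $\operatorname{lim}\psi\simeq(\operatorname{lim}\varphi)^{\rm op}$ and $\operatorname{colim}\psi\simeq(\operatorname{colim}\varphi)^{\rm op}$.

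Applying \Cref{lim-catinf-op} to $\psi$ then yields: $(\operatorname{lim}\varphi)^{\rm op}$ is a model of cartesian sections of $p^{\rm op}$, and $(\operatorname{colim}\varphi)^{\rm op}$ is a localisation of $X^{\rm op}$ at its $p^{\rm op}$-cartesian edges. Since $(-)^{\rm op}$ commutes with pullbacks of simplicial sets, a section of $p^{\rm op}\colon X^{\rm op}\to K^{\rm op}$ is the opposite of a section of $p\colon X\to K$, and by the edge identification above the cartesian ones correspond exactly to the cocartesian ones. Taking opposites of everything therefore turns the first statement into $\operatorname{lim}\varphi\simeq\Gamma_{\rm cocart}(p)$, refined to $\operatorname{Fun}^{\rm cocart}_{/K}(K, X)$ when $K$ is an $\infty$-category. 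For the colimit, opposing a localisation produces the localisation at the opposed marked edges, so $(\operatorname{colim}\varphi)^{\rm op}\simeq X^{\rm op}[(p^{\rm op}\text{-cart})^{-1}]$ yields $\operatorname{colim}\varphi\simeq X[(p\text{-cocart})^{-1}]$, as required.

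The main obstacle I expect is simply bookkeeping: one must track three simultaneous applications of $\operatorname{op}$ --- on the base $K$, on the total space $X$, and on the coefficient functor $\varphi$ --- and check that the compatibility square of \Cref{unsteq} (stated for a functor out of $\EuScript{C}^{\rm op}$) is plugged in with the correct slot. Once those identifications are lined up, the result is formal.
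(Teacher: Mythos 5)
Your argument is correct and is essentially the paper's own: the paper presents \Cref{lim-catinf} as ``a dual version'' of \Cref{lim-catinf-op} without further argument, and your proof is exactly the careful execution of that dualization, using the $\operatorname{op}$-compatibility square of \Cref{unsteq} to identify $\int^{K^{\rm op}}(\operatorname{op}\circ\varphi)$ with $p^{\rm op}\colon X^{\rm op}\to K^{\rm op}$ and matching cartesian edges, sections, and localisations under $(-)^{\rm op}$. The bookkeeping you flag is handled correctly throughout.
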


We can now make it precise the idea that limits and colimits over a diagram, which breaks up into some clusters, can be determined by first computing (co)limits of all the clusters, and then compute the (co)limits of the resulting (smaller) diagram (cf. \cite[\S 4.2.3]{HTT}).
\begin{theorem}[Limits by decomposing diagrams]\label{decomp-diagram}\index{limits decomposing}\label{lim-decomp}
	Let $K$ be a simplicial set and let $\varphi\colon K\to\operatorname{Cat}_{\infty}, i\mapsto\EuScript{C}(i)$ be a functor with colimit $\EuScript{C}$. Let $p\colon\EuScript{E}\to K$ be the cocartesian fibration classified by $\varphi$. Let $\EuScript{V}$ be an $\infty$-category, and let $F\colon\EuScript{C}\to\EuScript{V}$ be a functor. For each vertex $i\in K$, let $F|_{\EuScript{C}(i)}\colon\EuScript{C}(i)\to\EuScript{C}\xrightarrow{F}\EuScript{V}$ be a composition \myparen{in $\operatorname{Cat}_{\infty}$}.
	\begin{enumerate}[label=\myparen{\arabic*}]
		\item If $\EuScript{V}$ is cocomplete, we have a canonical equivalence
		\[
		\operatorname{colim}_{i\in K}\operatorname{colim}F|_{\EuScript{C}(i)}\xrightarrow{\simeq}\operatorname{colim}F.\]
		If $F$ is restricted from a functor $\EuScript{E}\to\EuScript{V}$, then the above colimit is also identified with $\operatorname{colim}(\EuScript{E}\to\EuScript{V})$.
		\item If $\EuScript{V}$ is complete, we have a canonical equivalence
		\[
		\operatorname{lim}F\xrightarrow{\simeq}\operatorname{lim}_{i\in K^{\mathrm{op}}}\operatorname{lim}F|_{\EuScript{C}(i)}.\]
		If $F$ is restricted from a functor $\EuScript{E}\to\EuScript{V}$, then the above limit is also identified with $\operatorname{lim}(\EuScript{E}\to\EuScript{V})$.
	\end{enumerate}
\end{theorem}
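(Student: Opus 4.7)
The approach hinges on \Cref{lim-catinf}, which identifies $\EuScript{C} = \operatorname{colim}\varphi$ with the localisation of $\EuScript{E}$ along $p$-cocartesian edges; denote by $q\colon \EuScript{E} \to \EuScript{C}$ the corresponding localisation functor. By construction of the unstraightening, the fiber $\EuScript{E}_i$ of $p$ at $i \in K$ is $\EuScript{C}(i)$, and $q|_{\EuScript{E}_i}$ coincides with the structural map $\EuScript{C}(i) \to \EuScript{C}$, so $(F \circ q)|_{\EuScript{E}_i} \simeq F|_{\EuScript{C}(i)}$. The plan is to compute $\operatorname{colim}(F \circ q)$ and $\operatorname{lim}(F \circ q)$ along $\EuScript{E}$ in two ways.

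First, since $q$ is a localisation, $q^*\colon \operatorname{Fun}(\EuScript{C}, \EuScript{V}) \hookrightarrow \operatorname{Fun}(\EuScript{E}, \EuScript{V})$ is fully faithful, and its essential image contains the constant diagrams $\underline{v}$ for $v \in \EuScript{V}$ (since constant functors trivially invert every edge). Yoneda then gives
\[\operatorname{Map}_{\EuScript{V}}(\operatorname{colim}_{\EuScript{E}}(F \circ q), v) \simeq \operatorname{Map}_{\operatorname{Fun}(\EuScript{E}, \EuScript{V})}(F \circ q, \underline{v}) \simeq \operatorname{Map}_{\operatorname{Fun}(\EuScript{C}, \EuScript{V})}(F, \underline{v}) \simeq \operatorname{Map}_{\EuScript{V}}(\operatorname{colim}_{\EuScript{C}} F, v),\]
so $\operatorname{colim}_{\EuScript{C}} F \simeq \operatorname{colim}_{\EuScript{E}}(F \circ q)$, and symmetrically for limits. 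This also handles the addendum: if $F$ is restricted from some $\tilde F\colon \EuScript{E} \to \EuScript{V}$, then $\tilde F \simeq F \circ q$, hence $\operatorname{colim}\tilde F \simeq \operatorname{colim} F$.

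Second, I would compute $\operatorname{colim}_{\EuScript{E}}(F \circ q)$ by left Kan extension along $p\colon \EuScript{E} \to K$. The cocartesian structure of $p$ makes the inclusion of each fiber $\EuScript{E}_i$ into the relevant slice $\EuScript{E} \times_K K_{/i}$ right cofinal (via cocartesian lifts providing initial objects of the slices in question), so the pointwise Kan formula yields $p_!(F \circ q)(i) \simeq \operatorname{colim}_{\EuScript{C}(i)} F|_{\EuScript{C}(i)}$. Transitivity of colimits then gives
\[\operatorname{colim}_{\EuScript{E}}(F \circ q) \simeq \operatorname{colim}_{i \in K} p_!(F \circ q)(i) \simeq \operatorname{colim}_{i \in K} \operatorname{colim}_{\EuScript{C}(i)} F|_{\EuScript{C}(i)},\]
which completes part (1). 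Part (2) follows either dually --- applying part (1) to $\varphi^{\mathrm{op}}\colon K \to \operatorname{Cat}_\infty,\, i \mapsto \EuScript{C}(i)^{\mathrm{op}}$ and $F^{\mathrm{op}}$ valued in $\EuScript{V}^{\mathrm{op}}$, using that $(-)^{\mathrm{op}}$ is an auto-equivalence of $\operatorname{Cat}_\infty$ --- or by the analogous right Kan extension argument; the outer indexing flips to $K^{\mathrm{op}}$ because fiber-wise limits are contravariantly functorial in $K$ under a cocartesian fibration.

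The main obstacle is the cofinality input for the pointwise Kan formula, which is precisely where the cocartesian hypothesis on $p$ is actually used; the localisation/Yoneda reduction and the dualisation for part (2) are essentially formal once that ingredient is in place.
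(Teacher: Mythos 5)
Your proof is correct and follows essentially the same route as the paper's: both identify $\EuScript{C}$ as the localisation of $\EuScript{E}$ at the $p$-cocartesian edges, reduce $\operatorname{colim}F$ to $\operatorname{colim}(F\circ q)$, compute the latter by pointwise left Kan extension along $p$ using the right cofinality of the fibre inclusions $\EuScript{C}(i)\hookrightarrow\EuScript{E}\times_K K_{/i}$, and then conclude by transitivity of colimits and dualisation for part (2). The only cosmetic differences are that the paper deduces $\operatorname{colim}(F\circ q)\simeq\operatorname{colim}F$ from the localisation being right cofinal rather than from full faithfulness of $q^*$ plus the Yoneda argument, and obtains the fibre-inclusion cofinality by pulling back the right cofinal map $\Delta^0\to K_{/i}$ along the cocartesian (hence proper) fibration rather than by exhibiting initial objects via cocartesian lifts.
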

\begin{proof}
	\begin{enumerate}[label=(\arabic*)]
		\item We have a localisation functor $L\colon\EuScript{E}\to\EuScript{C}$, which is thus right cofinal. For each vertex $i\in K$, consider the following diagram
		\[
		\begin{tikzcd}
			\EuScript{C}(i) \ar[r, "{\rm cofinal}"] \ar[d]  \ar[dr, phantom, "\pb", very near start]   & \EuScript{E}_{/i} 		\ar[r] \ar[d]  \ar[dr, phantom, "\pb", very near start]   &  \EuScript{E}		\ar[r, "L"] \ar[d, "p"]   &  \EuScript{C}		\ar[r, "F"]		& \EuScript{V}	 \\
			\Delta^0  \ar[r, "{\rm cofinal}"]     & K_{/i} 		\ar[r]        & K \ar[urr, bend right=20, "p_!(F\circ L)"']  &
		\end{tikzcd}\]
		in $\sset$, where all three vertical arrows are cocartesian fibrations, and the bottom horizontal arrow $\Delta^0 \xrightarrow{1_i}K_{/i}$ is right cofinal (see \Cref{ex-rcofmaps}). So the arrow $\EuScript{C}(i)\to\EuScript{E}_{/i}$ is also right cofinal. We can compute the left Kan extension as
		\[p_!(F\circ L)(i)\simeq\operatorname{colim}F|_{\EuScript{C}(i)}.\]
		Thus we obtain
		\[
		\operatorname{colim}_{i\in K}\operatorname{colim}F|_{\EuScript{C}(i)}\simeq\operatorname{colim}p_!(F\circ L)\simeq\operatorname{colim}F\circ L\xrightarrow{\simeq}\operatorname{colim}F.\]
		\item Since $\operatorname{colim}(K\xrightarrow{\varphi}\operatorname{Cat}_{\infty}\xrightarrow{\mathrm{op}}\operatorname{Cat}_{\infty})\xrightarrow{\simeq}\EuScript{C}^{\mathrm{op}}$, we have a canonical equivalence
		\[
		\operatorname{colim}_{i\in K}\operatorname{colim}F^{\mathrm{op}}|_{\EuScript{C}(i)^{\mathrm{op}}}\xrightarrow{\simeq}\operatorname{colim}(\EuScript{C}^{\mathrm{op}}\xrightarrow{F^{\mathrm{op}}}\EuScript{V}^{\mathrm{op}})\]
		in $\EuScript{V}^{\mathrm{op}}$ by (1). Viewing in $\EuScript{V}$, we obtain
		\[\operatorname{lim}_{i\in K^{\mathrm{op}}}\operatorname{lim}F|_{\EuScript{C}(i)}\xleftarrow{\simeq}\operatorname{lim}F.\]
	\end{enumerate}
\end{proof}
\begin{prop}
	Let $\EuScript{C}$ be a small $\infty$-category, let $P$ be the filtered poset of full subcategories of $\EuScript{C}$ with finitely many objects \myparen{\emph{ordered by inclusion}}. Then the colimit of $A$ over $A\in P$ is canonically equivalent to $\EuScript{C}$.
\end{prop}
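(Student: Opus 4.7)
The plan is to verify the claim via \Cref{bas-crit-catcolim}. Let $\Phi\colon P\to\operatorname{Cat}_\infty$ be the functor sending a finite full subcategory $A$ to itself; the inclusions $A\hookrightarrow\EuScript{C}$ exhibit a canonical cocone, extending $\Phi$ to $\widetilde\Phi\colon P^{\triangleright}\to\operatorname{Cat}_\infty$ with cone vertex $\EuScript{C}$. By \Cref{bas-crit-catcolim}, it suffices to show that, for every $\infty$-category $\EuScript{E}$, the canonical map
\[\operatorname{Fun}(\EuScript{C},\EuScript{E})\longrightarrow\lim_{A\in P^{\rm op}}\operatorname{Fun}(A,\EuScript{E})\]
is an equivalence in $\operatorname{Cat}_\infty$.

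The key observation is at the level of simplicial sets: $\EuScript{C}=\bigcup_{A\in P}A$. Indeed, any simplex $\sigma\colon\Delta^n\to\EuScript{C}$ has at most $n+1$ vertices and hence factors through the full subcategory on the (finite) set of those vertices. Moreover $P$ is filtered, since the full subcategory on the union of two finite vertex sets belongs to $P$ and contains both. Thus $\EuScript{C}$ is the filtered colimit, in $\sset$, of its finite full subcategories along monomorphisms.

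Next I would apply the internal-hom adjunction: since $\ihm_{\sset}(-,\EuScript{E})$ sends colimits in the first variable to limits,
\[\operatorname{Fun}(\EuScript{C},\EuScript{E})\cong\lim_{A\in P^{\rm op}}\operatorname{Fun}(A,\EuScript{E})\]
as an identification of simplicial sets. It then remains to note that this strict limit computes the limit in $\operatorname{Cat}_\infty$. This is where the main (minor) obstacle lies: the transition maps $\operatorname{Fun}(A',\EuScript{E})\to\operatorname{Fun}(A,\EuScript{E})$ for $A\subseteq A'$ are restrictions along monomorphisms of simplicial sets with quasi-categorical target, and are therefore isofibrations in the Joyal model structure; a cofiltered limit of quasi-categories along isofibrations agrees with the $\infty$-categorical limit. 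Equivalently, one may simply invoke that filtered colimits along monomorphisms in $\sset$ are homotopy colimits in the Joyal model structure, whence $\EuScript{C}\simeq\operatorname{colim}_{A\in P}A$ already in $\operatorname{Cat}_\infty$, and the identification of functor categories above follows formally. Either way, combining with \Cref{bas-crit-catcolim} completes the proof.
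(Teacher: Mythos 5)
Your argument is correct in substance but takes a genuinely different route from the paper's. The paper verifies criterion (2) of \Cref{bas-crit-catcolim}: it rewrites $\operatorname{Map}_{\operatorname{Cat}_{\infty}}(A,\EuScript{E})$ in terms of $\operatorname{Map}_{\EuScript{S}}(|A|,\EuScript{E}^{\simeq})$ via the adjunctions of \Cref{spcat-adj} and then invokes \Cref{simpsubs-filt-colim-geomrel}, i.e.\ the fact that $|-|$ carries the filtered union $\EuScript{C}=\bigcup_{A\in P}A$ to a filtered colimit of spaces. You instead verify criterion (3) by working at the level of simplicial sets: every simplex of $\EuScript{C}$ lies in the full subcategory spanned by its (finitely many) vertices, so $\EuScript{C}=\operatorname{colim}_{A\in P}A$ in $\sset$ along a filtered poset of monomorphisms; the internal hom turns this into a strict limit of functor categories, and it remains to see that the strict (co)limit agrees with the $\infty$-categorical one. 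Your route has the advantage of never passing through cores and classifying spaces; note that the paper's first step $\operatorname{Map}_{\operatorname{Cat}_{\infty}}(A,\EuScript{E})\simeq\operatorname{Map}_{\operatorname{Cat}_{\infty}}(A,\EuScript{E}^{\simeq})$ identifies $\operatorname{Fun}(A,\EuScript{E})^{\simeq}$ with $\operatorname{Fun}(A,\EuScript{E}^{\simeq})$, which already differ for $A=\Delta^1$, so your more pedestrian argument is arguably the safer of the two.

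One caveat on your final step. The justification ``a cofiltered limit of quasi-categories along isofibrations agrees with the $\infty$-categorical limit'' is not valid over an arbitrary cofiltered poset: objectwise fibrancy of the transition maps does not make the diagram injectively fibrant beyond towers and suitably cofinite inverse categories (this is the same phenomenon as the nonvanishing of higher derived limits of cofiltered systems of surjections over uncountable posets). You should rely entirely on your second justification: weak categorical equivalences are stable under filtered colimits (see \cite{CiHi}), hence filtered colimits in $\sset$ present filtered colimits in $\operatorname{Cat}_{\infty}$; applied to the filtered union $\EuScript{C}=\bigcup_{A\in P}A$ this already yields $\EuScript{C}\simeq\operatorname{colim}_{A\in P}A$ in $\operatorname{Cat}_{\infty}$ and finishes the proof, with the identification of functor categories then a formal consequence rather than an intermediate step needing its own homotopical justification.
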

\begin{proof}
	We define a diagram $\varphi\colon P^{\triangleright}\to\operatorname{Cat}_{\infty}$ by sending $A\in P$ to $A$, and the cone point to $\EuScript{C}$ and verify \Cref{bas-crit-catcolim} (2). For every $\EuScript{E}\in\operatorname{Cat}_{\infty}$, by \Cref{spcat-adj} we have $\operatorname{Map}_{\operatorname{Cat}_{\infty}}(A, \EuScript{E})\simeq\operatorname{Map}_{\operatorname{Cat}_{\infty}}(A, \EuScript{E}^{\simeq})\simeq\operatorname{Map}_{\EuScript{S}}(|A|, \EuScript{E}^{\simeq})$, whose limit over $A\in P$ is canonically equivalent to $\operatorname{Map}_{\EuScript{S}}(|\EuScript{C}|, \EuScript{E}^{\simeq})\simeq\operatorname{Map}_{\operatorname{Cat}_{\infty}}(\EuScript{C}, \EuScript{E})$ (by \Cref{simpsubs-filt-colim-geomrel} below), i.e. $\operatorname{Map}_{\operatorname{Cat}_{\infty}}(\varphi, \EuScript{E})\colon(P^{\rm op})^{\triangleleft}\to\EuScript{S}$ is a limit diagram.
\end{proof}
\begin{remark}
	For an $\infty$-category $\EuScript{C}$, \cite[Corollary 2.33]{COJ} gives a criterion for a collection of (left-closed) full subcategories of $\EuScript{C}$ to have $\EuScript{C}$ as colimit.
\end{remark}

\section{Reshaping colimit diagrams into simplicial ones}

We can now give a non-technical proof the following result for colimits and limits of diagrams of the shape of the category of simplices of another simplicial set in the $\infty$-categorical setting, in parallel with \cite[Lemma 18.9.1]{Hir03} in the simplicial model-categorical setting. We take advantage of the above result on computing limits by decomposing diagrams.
\begin{prop}\label{lim-catsimp}
	Let $C$ be a simplicial set and let $\mathbf{\Delta}_{/C}$ be the category of simplices. Let $\EuScript{V}$ be an $\infty$-category.
	\begin{enumerate}[label=\myparen{\arabic*}]
		\item Assume that $\EuScript{V}$ is cocomplete. For any diagram $X\colon(\mathbf{\Delta}_{/C})^{\rm op}\to\EuScript{V}$, consider the new diagram $X'\colon\mathbf{\Delta}^{\rm op}\to\EuScript{V}, [n]\mapsto\coprod_{\sigma\in C_n}X(\sigma)$, we have a natural equivalence
		\[\operatorname{colim}((\mathbf{\Delta}_{/C})^{\rm op}\xrightarrow{X}\EuScript{V})\simeq\operatorname{colim}(\mathbf{\Delta}^{\rm op}\xrightarrow{X'}\EuScript{V}).\]
		\item Assume that $\EuScript{V}$ is complete. For any diagram $Y\colon\mathbf{\Delta}_{/C}\to\EuScript{V}$, consider the new diagram $Y'\colon\mathbf{\Delta}\to\EuScript{V}, [n]\mapsto\prod_{\sigma\in C_n}Y(\sigma)$, we have a natural equivalence
		\[\operatorname{lim}(\mathbf{\Delta}_{/C}\xrightarrow{Y}\EuScript{V})\simeq\operatorname{lim}(\mathbf{\Delta}\xrightarrow{Y'}\EuScript{V}).\]
		\item For any $K\in\EuScript{V}$, write $\underline{K}$ for the constant diagram with value $K$.  If $\EuScript{V}$ is \myparen{\emph{co}}complete, we have natural equivalences
		\[
		\begin{aligned}
			\operatorname{colim}((\mathbf{\Delta}_{/C})^{\rm op}\xrightarrow{\underline{K}}\EuScript{V})&\simeq\operatorname{colim}_{[n]\in\mathbf{\Delta}^{\rm op}}\coprod_{C_n}K,\\
			\operatorname{lim}(\mathbf{\Delta}_{/C}\xrightarrow{\underline{K}}\EuScript{V})&\simeq\operatorname{lim}_{[n]\in\mathbf{\Delta}}K^{C_n}.
		\end{aligned}\]
	\end{enumerate}
\end{prop}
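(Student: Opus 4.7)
The plan is to reduce this proposition to the \textsc{Limits by decomposing diagrams} theorem (\Cref{lim-decomp}), using the concrete description of the cartesian fibration $p\colon\mathbf{\Delta}_{/C}\to\mathbf{\Delta}$ from \Cref{catsimp-cart}. The key observation is that passing to opposites makes $p^{\rm op}\colon(\mathbf{\Delta}_{/C})^{\rm op}\to\mathbf{\Delta}^{\rm op}$ a cocartesian fibration classified by the functor $C\colon\mathbf{\Delta}^{\rm op}\to\EuScript{S}\subset\operatorname{Cat}_\infty$, $[n]\mapsto C_n$, whose fibre over $[n]$ is the discrete $\infty$-groupoid $C_n$.

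For part (1), I would take $K=\mathbf{\Delta}^{\rm op}$, $\varphi=C\colon\mathbf{\Delta}^{\rm op}\to\operatorname{Cat}_\infty$, and $\EuScript{E}=(\mathbf{\Delta}_{/C})^{\rm op}$ in \Cref{lim-decomp}(1), and apply it to $X\colon\EuScript{E}\to\EuScript{V}$. Since the fibre over $[n]$ is the discrete set $C_n$, the restriction $X|_{\EuScript{C}([n])}$ is just the family $(X(\sigma))_{\sigma\in C_n}$, and a colimit over a discrete set is a coproduct. This identifies $\operatorname{colim}X|_{\EuScript{C}([n])}\simeq\coprod_{\sigma\in C_n}X(\sigma)$, which is precisely $X'([n])$; moreover the resulting functor $[n]\mapsto\coprod_{\sigma\in C_n}X(\sigma)$ is, by construction, the push-forward of $X$ along $p^{\rm op}$, so it agrees with $X'$ up to natural equivalence. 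The conclusion
\[\operatorname{colim}\bigl((\mathbf{\Delta}_{/C})^{\rm op}\xrightarrow{X}\EuScript{V}\bigr)\simeq\operatorname{colim}_{[n]\in\mathbf{\Delta}^{\rm op}}\coprod_{\sigma\in C_n}X(\sigma)\]
is then exactly the ``$F$ restricted from a functor $\EuScript{E}\to\EuScript{V}$'' clause of \Cref{lim-decomp}(1).

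For part (2), the cleanest route is to dualise: apply part (1) to $Y^{\rm op}\colon(\mathbf{\Delta}_{/C})^{\rm op}\to\EuScript{V}^{\rm op}$ (which is cocomplete), and rewrite the resulting coproducts as products when translating back to $\EuScript{V}$. Alternatively one can directly invoke \Cref{lim-decomp}(2) along the cartesian fibration $p\colon\mathbf{\Delta}_{/C}\to\mathbf{\Delta}$, noting that the straightening of $p$ sends $[n]$ to the discrete set $C_n$ as well. Part (3) is then immediate from (1) and (2) by taking $X=\underline{K}=Y$, since a coproduct (resp.\ product) of copies of $K$ indexed by a set $C_n$ is, by definition, $\coprod_{C_n}K$ (resp.\ $K^{C_n}$).

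The main obstacle I anticipate is bureaucratic rather than conceptual: one must verify that the push-forward of $X$ along the cocartesian fibration $p^{\rm op}$, produced abstractly by the proof of \Cref{lim-decomp} through a localisation and a left Kan extension, really is naturally equivalent to the hand-defined functor $X'\colon\mathbf{\Delta}^{\rm op}\to\EuScript{V}$, $[n]\mapsto\coprod_{\sigma\in C_n}X(\sigma)$. This amounts to checking that the face and degeneracy maps induced by $p^{\rm op}$-cocartesian lifts of morphisms in $\mathbf{\Delta}^{\rm op}$ match, on fibres, the maps $C_m\to C_n$ coming from functoriality of $[n]\mapsto C_n$; this is precisely the content of saying that $p^{\rm op}$ is classified by $C\colon\mathbf{\Delta}^{\rm op}\to\EuScript{S}$ in the sense of \Cref{unsteq}, so no new computation is required once \Cref{catsimp-cart}(2) is invoked.
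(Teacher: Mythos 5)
Your proposal is correct and takes essentially the same approach as the paper: the paper's entire proof of this proposition is the one-line instruction ``Apply \Cref{catsimp-cart} and \Cref{lim-decomp},'' and your write-up is a faithful unpacking of exactly that, including the identification of $X'$ with the push-forward of $X$ along $p^{\rm op}$ and the use of the discreteness of the fibres $C_n$ to turn the fibrewise colimits into coproducts.
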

\begin{proof}
	Apply \Cref{catsimp-cart} and \Cref{lim-decomp}.
\end{proof}
\begin{remark}
	Since $\mathbf{\Delta}_{\rm s}\hookrightarrow\mathbf{\Delta}$ is left cofinal and $\mathbf{\Delta}_{/C}\to\mathbf{\Delta}$ is a right fibration, $(\mathbf{\Delta}_{\rm s})_{/C}\hookrightarrow\mathbf{\Delta}_{/C}$ is also left cofinal. So in the above colimits and limits, we can safely replace $\mathbf{\Delta}_{/C}$ everywhere by $(\mathbf{\Delta}_{\rm s})_{/C}$ without changing the results; having this replacement will avoid a lot of redundancies.
\end{remark}
\begin{corollary}
	Let $C$ be a simplicial set and let $\mathbf{\Delta}_{/C}$ be its category of simplices. Let $K\in\EuScript{S}$, we have natural equivalences
	\begin{equation}\label{lim-cosimp}
		\begin{aligned}
			\operatorname{colim}((\mathbf{\Delta}_{/C})^{\rm op}\xrightarrow{\underline{K}}\EuScript{S})&\simeq\operatorname{colim}_{[n]\in\mathbf{\Delta}^{\rm op}}\coprod_{C_n}K,\\
			\operatorname{lim}(\mathbf{\Delta}_{/C}\xrightarrow{\underline{K}}\EuScript{S})&\simeq\operatorname{lim}_{[n]\in\mathbf{\Delta}}K^{C_n},\\
			\operatorname{colim}((\mathbf{\Delta}_{/C})^{\rm op}\xrightarrow{*}\EuScript{S})&\simeq\operatorname{colim}_{[n]\in\mathbf{\Delta}^{\rm op}}C_n.
		\end{aligned}
	\end{equation}
\end{corollary}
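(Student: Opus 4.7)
The plan is to observe that this corollary is essentially a direct specialisation of \Cref{lim-catsimp} (3) to the case $\EuScript{V}=\EuScript{S}$, which is both complete and cocomplete (being the $\infty$-category of spaces). The first two equivalences are then immediate from parts (3) of \Cref{lim-catsimp} applied with target $\EuScript{S}$.

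For the third equivalence, I would view it as the $K=*$ specialisation of the first. Indeed, for every $[n]\in\mathbf{\Delta}^{\mathrm{op}}$, the coproduct $\coprod_{C_n}*$ in $\EuScript{S}$ is the discrete space on the underlying set $C_n$, so the simplicial object $[n]\mapsto\coprod_{C_n}*$ in $\EuScript{S}$ is naturally identified with $[n]\mapsto C_n$ (viewing sets as discrete spaces via $\mathsf{Set}\hookrightarrow\EuScript{S}$). Taking $\operatorname{colim}_{[n]\in\mathbf{\Delta}^{\mathrm{op}}}$ then yields the asserted equivalence, where on the left, the functor $\underline{*}\colon(\mathbf{\Delta}_{/C})^{\mathrm{op}}\to\EuScript{S}$ is the terminal functor (denoted $*$ in the statement).

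There is no real obstacle: everything is a bookkeeping consequence of \Cref{lim-catsimp}. The only point worth emphasising in the write-up is the identification $\coprod_{C_n}*\simeq C_n$ in $\EuScript{S}$, which justifies the third line.

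\begin{proof}
Since $\EuScript{S}$ is both complete and cocomplete, the first two equivalences follow at once from \Cref{lim-catsimp} (3) with $\EuScript{V}=\EuScript{S}$. For the third, note that in $\EuScript{S}$ we have $\coprod_{C_n}*\simeq C_n$ (the discrete space on the set of $n$-simplices), so specialising the first equivalence to $K=*$ yields
\[\operatorname{colim}((\mathbf{\Delta}_{/C})^{\mathrm{op}}\xrightarrow{*}\EuScript{S})\simeq\operatorname{colim}_{[n]\in\mathbf{\Delta}^{\mathrm{op}}}\coprod_{C_n}*\simeq\operatorname{colim}_{[n]\in\mathbf{\Delta}^{\mathrm{op}}}C_n,\]
as required.
\end{proof}
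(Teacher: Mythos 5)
Your proposal is correct and matches the paper's (implicit) argument: the corollary is stated without proof precisely because it is the specialisation of \Cref{lim-catsimp} (3) to $\EuScript{V}=\EuScript{S}$, with the third line obtained by taking $K=*$ and identifying $\coprod_{C_n}*$ with the discrete space $C_n$. Your explicit remark on that identification is the only content needed, and it is right.
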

\begin{corollary}\label{spcat-adj}
	The inclusion functor $\EuScript{S}\hookrightarrow\operatorname{Cat}_{\infty}$ has a right adjoint $\operatorname{Core}\colon\operatorname{Cat}_{\infty}\to\EuScript{S}$ and a left adjoint $\operatorname{|-|}\colon\operatorname{Cat}_{\infty}\to\EuScript{S}$, given by inverting all arrows. Moreover, for any $\EuScript{C}\in\operatorname{Cat}_{\infty}$, we have
	\begin{equation}\label{htyp-qcat}
		|\EuScript{C}|\simeq\operatorname{colim}_{\EuScript{C}}*\simeq\operatorname{colim}((\mathbf{\Delta}_{/\EuScript{C}})^{\rm op}\xrightarrow{*}\EuScript{S})\simeq\operatorname{colim}_{[n]\in\mathbf{\Delta}^{\rm op}}\EuScript{C}_n\simeq|\EuScript{C}^{\rm op}|.
	\end{equation}
	Here the $\infty$-category $\mathbf{\Delta}_{/\EuScript{C}}$ is relative to the inclusion functor $\mathbf{\Delta}\hookrightarrow\operatorname{Cat}_{\infty}, [n]\mapsto\Delta^n$, and we view $\EuScript{C}_n$ as a discrete space.
\end{corollary}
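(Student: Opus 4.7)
The plan is to treat the adjunction claim and the chain of equivalences separately, invoking standard facts for the former and \Cref{lim-catsimp}(3) for the latter. For the right adjoint, I will take $\operatorname{Core}(\EuScript{C})$ to be the maximal Kan subcomplex of $\EuScript{C}$: every functor from a space $X$ to $\EuScript{C}$ sends edges to equivalences (since edges in $X$ are invertible), hence factors uniquely through $\operatorname{Core}(\EuScript{C})$; see, e.g., \cite[\S 1.2.5]{HTT}. For the left adjoint $|-|$, I will either construct it directly as the localisation at all morphisms, or deduce its existence by the adjoint functor theorem, using that the inclusion $\iota\colon\EuScript{S}\hookrightarrow\operatorname{Cat}_{\infty}$ is accessible and preserves limits (the latter from the existence of $\operatorname{Core}$).

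For the chain of equivalences I handle the four links in turn. First, $|\EuScript{C}|\simeq\operatorname{colim}_{\EuScript{C}}*$ follows from the universal property: for any $X\in\EuScript{S}$,
\[\operatorname{Map}_{\EuScript{S}}(\operatorname{colim}_{\EuScript{C}}*,X)\simeq\operatorname{lim}_{\EuScript{C}}X\simeq\operatorname{Fun}(\EuScript{C},X)\simeq\operatorname{Map}_{\operatorname{Cat}_{\infty}}(\EuScript{C},\iota X)\simeq\operatorname{Map}_{\EuScript{S}}(|\EuScript{C}|,X),\]
since $\operatorname{Fun}(\EuScript{C},X)$ is already a Kan complex when $X$ is. The third equivalence $\operatorname{colim}((\mathbf{\Delta}_{/\EuScript{C}})^{\rm op}\xrightarrow{*}\EuScript{S})\simeq\operatorname{colim}_{[n]\in\mathbf{\Delta}^{\rm op}}\EuScript{C}_n$ is a direct application of \Cref{lim-catsimp}(3) with $C=\EuScript{C}$, $K=*$, and $\EuScript{V}=\EuScript{S}$. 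The middle equivalence $\operatorname{colim}_{\EuScript{C}}*\simeq\operatorname{colim}((\mathbf{\Delta}_{/\EuScript{C}})^{\rm op}\xrightarrow{*}\EuScript{S})$ then reduces, via the already established identification with $|-|$, to the classical fact that the last-vertex functor $\mathbf{\Delta}_{/\EuScript{C}}\to\EuScript{C}$ is a weak homotopy equivalence of simplicial sets (and so is its opposite). Finally, $\operatorname{colim}_{[n]\in\mathbf{\Delta}^{\rm op}}\EuScript{C}_n\simeq|\EuScript{C}^{\rm op}|$ holds because $\EuScript{C}$ and $\EuScript{C}^{\rm op}$ have the same set of $n$-simplices for each $n$ (with dualised face/degeneracy maps), so the very same $\mathbf{\Delta}^{\rm op}$-diagram of discrete spaces computes $|\EuScript{C}^{\rm op}|$ by the preceding steps applied to $\EuScript{C}^{\rm op}$.

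The main obstacle will be the careful identification of the $\infty$-categorical slice $\mathbf{\Delta}_{/\EuScript{C}}$ (as specified in the statement, relative to $\mathbf{\Delta}\hookrightarrow\operatorname{Cat}_{\infty}$) with the strict $1$-categorical slice of $\mathbf{\Delta}$ over the underlying simplicial set of $\EuScript{C}$, which is the setting of \Cref{lim-catsimp}; this requires a brief check that the natural comparison is an equivalence, which holds because $\mathbf{\Delta}$ is a $1$-category and the mapping spaces $\operatorname{Map}_{\operatorname{Cat}_{\infty}}(\Delta^m,\Delta^n)$ are discrete. Once that identification is in hand, the rest is routine bookkeeping combining the adjunctions, \Cref{lim-catsimp}, and the self-duality of $\EuScript{S}\subset\operatorname{Cat}_{\infty}$ under $(-)^{\rm op}$.
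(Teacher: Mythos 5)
Your proposal is correct and takes essentially the same route as the paper: the paper's proof simply cites \cite[Corollary 2.10 (4) and (2)]{COJ} for the identifications $|\EuScript{C}|\simeq\operatorname{colim}_{\EuScript{C}}*\simeq\operatorname{colim}((\mathbf{\Delta}_{/\EuScript{C}})^{\rm op}\xrightarrow{*}\EuScript{S})$ and then derives the third equivalence from the reshaping statement (\ref{lim-cosimp}), exactly as you do via \Cref{lim-catsimp}(3). Your direct arguments for the cited facts --- corepresentability of $\operatorname{colim}_{\EuScript{C}}*$ via the adjunction, and the last-vertex functor $\mathbf{\Delta}_{/\EuScript{C}}\to\EuScript{C}$ being a weak homotopy equivalence together with $|\EuScript{C}|\simeq|\EuScript{C}^{\rm op}|$ --- are the standard ones underlying that citation (and reappear in the paper's proof of \Cref{htyp-sset}), so nothing essential differs.
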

\begin{proof}
	The first two descriptions of $|\EuScript{C}|$ is \cite[Corollary 2.10 (4) and (2)]{COJ}; the third then follows from (\ref{lim-cosimp}) above. We can also use \Cref{catsimp-cart}.
\end{proof}
\begin{prop}
	Let $K$ be a simplicial set and let $\varphi\colon K\to\operatorname{Cat}_{\infty}$ be a diagram classifying a cocartesian fibration $p\colon\EuScript{E}\to K$. Denote by $|\varphi|\colon K\xrightarrow{\varphi}\operatorname{Cat}_{\infty}\xrightarrow{\operatorname{|-|}}\EuScript{S}$ the composition.
\begin{enumerate}[label=\myparen{\arabic*}]
	\item We have $p_!(*)\simeq|\varphi|\colon K\to\operatorname{Cat}_{\infty}$, here $*$ denotes the constant functor $\EuScript{E}\to\EuScript{S}$ with value $*$.
	\item For $X, Y\in\operatorname{Fun}(K, \EuScript{S})$, we have
	\[\operatorname{lim}\left(\int_KX\xrightarrow{p_X}K\xrightarrow{Y}\EuScript{S}\right)\simeq\operatorname{Map}_{\operatorname{Fun}(K, \EuScript{S})}(X, Y),\]
	where $p_X\colon\int_KX\to K$ is the left fibration classified by $X$. In particular,
	\[\operatorname{lim}(K\xrightarrow{Y}\EuScript{S})\simeq\operatorname{Map}_{\operatorname{Fun}(K, \EuScript{S})}(*, Y).\]
\end{enumerate}
\end{prop}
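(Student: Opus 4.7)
The plan is to treat (1) via the pointwise formula for left Kan extension, reducing to a cofinality already implicit in the proof of \Cref{lim-decomp}, and then to deduce (2) from (1) by the $(p_X)_! \dashv p_X^*$ adjunction together with the universal property of limits in $\EuScript{S}$.

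For (1), I would evaluate $p_!(*)$ pointwise. The pointwise formula for left Kan extension along $p$ gives, at each vertex $i \in K$,
\[p_!(*)(i) \simeq \operatorname{colim}(\EuScript{E}_{/i} \to \EuScript{E} \xrightarrow{*} \EuScript{S}) \simeq |\EuScript{E}_{/i}|,\]
the second step being the description of the realisation as the colimit of a constant point diagram (\Cref{spcat-adj}). The fiber inclusion $\EuScript{E}_i = \varphi(i) \hookrightarrow \EuScript{E}_{/i}$ is right cofinal; this is exactly the cofinality distilled in the proof of \Cref{lim-decomp} from smoothness of cocartesian fibrations combined with the right cofinality of $\Delta^0 \to K_{/i}$ (\Cref{ex-rcofmaps}). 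Applying $|-|$ yields $|\EuScript{E}_{/i}| \simeq |\varphi(i)| = |\varphi|(i)$. To upgrade this pointwise equivalence to a natural equivalence of functors $K \to \EuScript{S}$, I would view the slice and fiber constructions $i \mapsto \EuScript{E}_{/i}$, $i \mapsto \EuScript{E}_i$ as diagrams $K \to \operatorname{Cat}_\infty$ linked by the natural fiber-to-slice inclusion, and observe, by the previous step, that this natural transformation becomes an equivalence after post-composing with $|-|$.

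For (2), viewing $X$ through $\EuScript{S} \hookrightarrow \operatorname{Cat}_\infty$ and applying (1) to the left fibration $p_X$, I obtain $(p_X)_!(*) \simeq |X| = X$ (since $|-|$ is the identity on spaces). The chain
\[\operatorname{Map}_{\operatorname{Fun}(K, \EuScript{S})}(X, Y) \simeq \operatorname{Map}((p_X)_!(*), Y) \simeq \operatorname{Map}_{\operatorname{Fun}(\int_K X, \EuScript{S})}(*, Y \circ p_X) \simeq \operatorname{lim}(Y \circ p_X)\]
then uses, respectively, part (1), the $(p_X)_! \dashv p_X^*$ adjunction, and the characterisation of limits in $\EuScript{S}$ as mapping spaces out of the terminal $*$ (i.e.\ the $\operatorname{const} \dashv \operatorname{lim}$ adjunction). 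The special case $X = *$ is then immediate: its unstraightening is the identity $K \to K$, so $p_* = \operatorname{id}_K$, yielding $\operatorname{lim}(K \xrightarrow{Y} \EuScript{S}) \simeq \operatorname{Map}_{\operatorname{Fun}(K, \EuScript{S})}(*, Y)$.

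The main obstacle I anticipate lies in (1), specifically in the naturality: the pointwise identification is essentially routine, but one must present both sides as functors on $K$ and exhibit the comparison coherently. I expect the fiber-to-slice construction at the level of $\operatorname{Cat}_\infty$-valued diagrams to package this cleanly, with pointwise right cofinality then forcing the transformation to be an equivalence; all remaining steps in (2) are formal unwinding of adjunctions.
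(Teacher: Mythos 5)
Your proposal is correct and follows essentially the same route as the paper: part (1) via the pointwise Kan extension formula plus the right cofinality of the fibre inclusion $\varphi(i)\to\EuScript{E}_{/i}$ extracted from the proof of \Cref{lim-decomp}, and part (2) via the $(p_X)_!\dashv p_X^*$ adjunction combined with $(p_X)_!(*)\simeq X$ and the identification $\operatorname{lim}\simeq\operatorname{Map}_{\operatorname{Fun}(-,\EuScript{S})}(*,-)$ (which the paper establishes through the right Kan extension $q_*$ to the point, exactly your $\operatorname{const}\dashv\operatorname{lim}$ adjunction). Your extra attention to the naturality of the comparison in (1) is a reasonable elaboration of a point the paper leaves implicit, but does not change the argument.
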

\begin{proof}
	 \begin{enumerate}[label=(\arabic*)]
	 	\item We have $p_!(*)(i)\simeq\operatorname{colim}_{\EuScript{E}_{/i}}*\simeq|\EuScript{E}_{/i}|$, and a byproduct of proof of \Cref{lim-decomp} tells that $\varphi(i)\to\EuScript{E}_{/i}$ is right cofinal, so $|\EuScript{E}_{/i}|\simeq|\varphi(i)|$, and thus $p_!(*)\simeq|\varphi|$.
	 	\item Let $q\colon K\to\Delta^0$ be the projection, we have
	 	\[\operatorname{lim}(K\xrightarrow{X}\EuScript{S})\simeq q_*X\simeq\operatorname{Map}_{\EuScript{S}}(*, q_*X)\simeq\operatorname{Map}_{\operatorname{Fun}(K, \EuScript{S})}(*, X).\]
	 	Thus by adjunction,
	 	\[
	 	\operatorname{lim}p_X^*Y\simeq\operatorname{Map}_{\operatorname{Fun}(\int_KX, \EuScript{S})}(*, p_X^*Y)\simeq\operatorname{Map}_{\operatorname{Fun}(K, \EuScript{S})}((p_X)_!*, Y)\simeq\operatorname{Map}_{\operatorname{Fun}(K, \EuScript{S})}(X, Y).\]
 	\end{enumerate}
\end{proof}
\begin{prop}\label{colim-repble}
	Let $\EuScript{C, D}$ be $\infty$-categories with $\EuScript{C}$ small, let $p\colon\EuScript{C}\to\EuScript{D}$ be a functor. Then for any object $d\in\EuScript{D}$, we have
	\[\operatorname{colim}(p^*\operatorname{Map}_{\EuScript{D}}(-, d)\colon\EuScript{C}^{\rm op}\to\EuScript{S})\simeq|\EuScript{C}_{/d}|.
	\]
	
	In particular, if $\EuScript{D}$ is small \emph{(and non-empty)}, we have
	\[\operatorname{colim}(\operatorname{Map}_{\EuScript{D}}(-, d)\colon\EuScript{D}^{\rm op}\to\EuScript{S})\simeq*.
	\]
\end{prop}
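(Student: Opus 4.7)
The plan is to identify the given colimit with the homotopy type of the classifying fibration. By \Cref{unsteq}, the representable $\operatorname{Map}_{\EuScript{D}}(-, d)\colon\EuScript{D}^{\rm op}\to\EuScript{S}$ classifies the right fibration $\EuScript{D}_{/d}\to\EuScript{D}$, and by the base-change compatibility therein, the pullback $p^*\operatorname{Map}_{\EuScript{D}}(-, d)\colon\EuScript{C}^{\rm op}\to\EuScript{S}$ classifies the right fibration $\EuScript{C}_{/d}=\EuScript{C}\times_{\EuScript{D}}\EuScript{D}_{/d}\to\EuScript{C}$, equivalently the left fibration $(\EuScript{C}_{/d})^{\rm op}\to\EuScript{C}^{\rm op}$.

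Next I would establish the general fact that for any $F\colon K\to\EuScript{S}$ classifying a left fibration $X\to K$, one has $\operatorname{colim}F\simeq|X|$. Each value $F(k)$, being a space, satisfies $F(k)\simeq\operatorname{colim}_{F(k)}*$ by (\ref{htyp-qcat}). Applying \Cref{decomp-diagram}(1) to $\varphi=F$ viewed as $\operatorname{Cat}_{\infty}$-valued (whose associated cocartesian fibration is simply the left fibration $X\to K$) and to the constant diagram $\underline{*}\colon X\to|X|\xrightarrow{*}\EuScript{S}$ yields
\[\operatorname{colim}F\;\simeq\;\operatorname{colim}_{k\in K}\operatorname{colim}_{F(k)}*\;\simeq\;\operatorname{colim}(X\xrightarrow{*}\EuScript{S})\;\simeq\;|X|,\]
the last equivalence again from (\ref{htyp-qcat}). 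Applied to our situation with $X=(\EuScript{C}_{/d})^{\rm op}$, and using $|(\EuScript{C}_{/d})^{\rm op}|\simeq|\EuScript{C}_{/d}|$ from \Cref{spcat-adj}, this gives $\operatorname{colim}(p^*\operatorname{Map}_{\EuScript{D}}(-, d))\simeq|\EuScript{C}_{/d}|$.

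For the final assertion, specialising to $\EuScript{C}=\EuScript{D}$ and $p=\operatorname{id}$ reduces us to showing $|\EuScript{D}_{/d}|\simeq*$, which is immediate from \Cref{ex-rcofmaps}: the inclusion $\Delta^0\xrightarrow{\operatorname{id}_d}\EuScript{D}_{/d}$ of the final object is right anodyne, hence a weak equivalence. The only real subtlety is keeping track of the orientations of the various fibrations and slice $\infty$-categories; everything else is a routine combination of unstraightening, \Cref{decomp-diagram}, and the colimit-of-points formula (\ref{htyp-qcat}).
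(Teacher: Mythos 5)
Your proof is correct and follows essentially the same route as the paper: both identify $p^*\operatorname{Map}_{\EuScript{D}}(-,d)$ with the straightening of the right fibration $\EuScript{C}_{/d}\to\EuScript{C}$ via base change, and then compute the colimit as $\operatorname{colim}_{(\EuScript{C}_{/d})^{\rm op}}*\simeq|\EuScript{C}_{/d}|$ — the paper phrases this last step as the transitivity $(v^{\rm op})_!(u^{\rm op})_!(*)$ of left Kan extensions, which is exactly the content of your appeal to \Cref{decomp-diagram}. Your explicit treatment of the ``in particular'' clause via the contractibility of $\EuScript{D}_{/d}$ (final object $\operatorname{id}_d$) is a fine addition that the paper leaves implicit.
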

\begin{proof}
	Since the functor $h_d\colon\EuScript{D}^{\rm op}\to\EuScript{S}$ classifies the right fibration $\EuScript{D}_{/d}\to\EuScript{D}$ and the unstraightening equivalences are compatible with base change, the functor $p^*\operatorname{Map}_{\EuScript{D}}(-, d)=h_d\circ p^{\rm op}\colon\EuScript{C}^{\rm op}\to\EuScript{S}, c\mapsto\operatorname{Map}_{\EuScript{D}}(p(c), d)$ classifies the right fibration $u\colon\EuScript{C}_{/d}\to\EuScript{C}$. Let $v\colon \EuScript{C}\to\Delta^0$ be the projection, then we have
	\[\operatorname{colim}_{\EuScript{C}^{\rm op}}(p^*\operatorname{Map}_{\EuScript{D}}(-, d))\simeq(v^{\rm op})_!(u^{\rm op})_!(*)\simeq\operatorname{colim}_{(\EuScript{C}_{/d})^{\rm op}}*\simeq|\EuScript{C}_{/d}|.
	\]
\end{proof}

For a simplicial set $C$, we also write $|C|:=|\EuScript{C}|\in\EuScript{S}$ for a Joyal fibrant model $C\to\EuScript{C}$ in $\sset$, and call it the \emph{homotopy type of the simplicial set $C$}, or the \emph{geometric realisation of $C$} (see \cite[Corollary 2.10 (6)]{COJ} for a justification) \index{homotopy type}.

We now extend the above result to all simplicial sets.
\begin{theorem}\label{htyp-sset}
	Let $C$ be a simplicial set, then we have
	\begin{equation}
		|C|\simeq\operatorname{colim}_{C}*\simeq\operatorname{colim}((\mathbf{\Delta}_{/C})^{\rm op}\xrightarrow{*}\EuScript{S})\simeq\operatorname{colim}_{[n]\in\mathbf{\Delta}^{\rm op}}C_n\simeq|C^{\rm op}|.
	\end{equation}
	We obtain a colimit diagram
	\newcommand{\stackspacesemi}{4}
	\newcommand{\stackk}[2][.8cm]{\;\tikz[baseline, yshift=.65ex]%
		{\foreach \k [evaluate=\k as \r using (.5*#2+.5-\k)*\stackspacesemi] in {1,...,#2}{%
				\k{\draw[->](0,\r pt)--(#1,\r pt);}
		}}\;}
	\begin{equation}
		\begin{tikzcd}
			\cdots\cdots\stackk{5}C_3\stackk{4}C_2\stackk{3}C_1\stackk{2}C_0 \arrow[r]   & {|C|}
		\end{tikzcd}
	\end{equation}
	in $\EuScript{S}$.
	
	Here each $C_n$ is viewed as a discrete space/category. The map $C_n\to|C|$ is given by $\sigma\mapsto\sigma(0)$, where for $\sigma\in C_n, 0\leqslant i\leqslant n$, we write $\sigma(i):=d_0\circ\cdots\circ\widehat{d_i}\circ\cdots\circ d_n(\sigma)$.
\end{theorem}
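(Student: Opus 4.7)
The plan is to reduce to the $\infty$-category case, already handled by \Cref{spcat-adj}. The substantive comparison to establish is $|C|\simeq|\mathbf{\Delta}_{/C}|$ in $\EuScript{S}$ for any simplicial set $C$; once this is in hand, the remaining equivalences in the statement assemble from prior results. Namely, $\mathbf{\Delta}_{/C}$ is a $1$-category, so \Cref{spcat-adj} applied to it gives $|\mathbf{\Delta}_{/C}|\simeq\operatorname{colim}_{\mathbf{\Delta}_{/C}}*\simeq\operatorname{colim}_{(\mathbf{\Delta}_{/C})^{\rm op}}*$, and the corollary after \Cref{lim-catsimp} identifies the last expression with $\operatorname{colim}_{[n]\in\mathbf{\Delta}^{\rm op}}C_n$; the equivalence $\operatorname{colim}_C*\simeq|C|$ follows from the definition of colimits of simplicial-set-shaped diagrams (picking a Joyal fibrant replacement $C\to\EuScript{C}$, one has $\operatorname{colim}_C*\simeq\operatorname{colim}_{\EuScript{C}}*\simeq|\EuScript{C}|=|C|$ by \Cref{spcat-adj}); and $|C|\simeq|C^{\rm op}|$ is then automatic from the symmetry of $\operatorname{colim}_{[n]\in\mathbf{\Delta}^{\rm op}}C_n$ under the involution $C\leftrightarrow C^{\rm op}$.

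To prove the key equivalence $|C|\simeq|\mathbf{\Delta}_{/C}|$, I would invoke the classical \emph{last-vertex map}
\[\pi_C\colon N(\mathbf{\Delta}_{/C})\longrightarrow C,\qquad \bigl[\sigma\colon\Delta^n\to C\bigr]\longmapsto\sigma(n),\]
natural in $C$, and verify that it is a Kan--Quillen weak equivalence for every simplicial set $C$. The standard argument is by induction on the skeletal filtration: for $C=\Delta^n$, the category $\mathbf{\Delta}_{/\Delta^n}$ has a terminal object, so $N(\mathbf{\Delta}_{/\Delta^n})$ and $\Delta^n$ are both contractible and $\pi_{\Delta^n}$ is a weak equivalence; the inductive step writes $\operatorname{sk}_nC=\operatorname{sk}_{n-1}C\cup_{\bigsqcup\partial\Delta^n}\bigsqcup\Delta^n$, uses that both endofunctors $C\mapsto N(\mathbf{\Delta}_{/C})$ and $C\mapsto C$ of $\sset$ send such pushouts along monomorphisms to homotopy pushouts, and applies the gluing lemma in the Kan--Quillen model structure. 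Since the $\infty$-categorical homotopy type of a $1$-category agrees with the Kan--Quillen homotopy type of its nerve, applying $|-|$ yields $|\mathbf{\Delta}_{/C}|\simeq|C|$ as desired.

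The main obstacle is precisely this weak-equivalence claim for $\pi_C$, which is folklore but requires model-categorical care. A tidier alternative, in the Cisinski spirit, is to observe that the coend presentation $C\cong\int^{[n]\in\mathbf{\Delta}}C_n\cdot\Delta^n$ exhibits $C$ as a homotopy colimit in $\sset$ over $\mathbf{\Delta}_{/C}$ of the Reedy-cofibrant diagram $\sigma\mapsto\Delta^{n(\sigma)}$; applying the colimit-preserving localisation $|-|\colon\sset\to\EuScript{S}$ and using $|\Delta^n|\simeq*$ for every $n$ then directly yields $|C|\simeq\operatorname{colim}_{\mathbf{\Delta}_{/C}}*\simeq|\mathbf{\Delta}_{/C}|$, the last step by \Cref{spcat-adj} applied to the $1$-category $\mathbf{\Delta}_{/C}$. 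Either route ultimately rests on the same classical input.
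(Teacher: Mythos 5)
Your proposal is correct and takes essentially the same route as the paper: both arguments hinge on the last-vertex map $\mathbf{\Delta}_{/C}\to C$ to compare $C$ with its category of simplices, and then assemble the chain of equivalences from \Cref{spcat-adj} and the corollary giving $\operatorname{colim}((\mathbf{\Delta}_{/C})^{\rm op}\xrightarrow{*}\EuScript{S})\simeq\operatorname{colim}_{[n]\in\mathbf{\Delta}^{\rm op}}C_n$. The only difference is one of justification: the paper invokes the stronger statement that the last-vertex map is a localisation (a marked equivalence, per Kerodon tag 01NC or \cite[Proposition 7.3.15]{CiHi}), whereas you use only the weaker classical fact that it is a Kan--Quillen weak equivalence (proved by skeletal induction), which indeed suffices here since only constant space-valued diagrams are in play.
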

\begin{proof}
	We take a Joyal fibrant replacement $C\to\EuScript{C}$ in $\sset$. Since colimits are invariant upon replacing $C$ by categorically equivalent ones, we have $\operatorname{colim}_{C}*\xrightarrow{\simeq}\operatorname{colim}_{\EuScript{C}}*$.
	
	Now we use the model structure of marked simplicial sets $\sset^+$ as a model of $\infty$-categories. We have a canonical comparison map $\tau_C\colon\mathbf{\Delta}_{/C}\to C, (\Delta^n\xrightarrow{\sigma}C)\mapsto\sigma(n)$. We mark $C$ by the identities, and mark $\mathbf{\Delta}_{/C}$ by those $1$-simplices that are sent to the identities in $C$, then $\tau_C\colon\mathbf{\Delta}_{/C}\to C$ is an equivalence in the model structure of marked simplicial sets $\sset^+$, exhibiting $C$ as a localization of $\mathbf{\Delta}_{/C}$; see \cite[\href{https://kerodon.net/tag/01NC}{Kerodon tag/01NC}]{kerodon} or \cite[Proposition 7.3.15]{CiHi}. We see that $\EuScript{C}$ is a common localization of $\mathbf{\Delta}_{/C}$ and $\mathbf{\Delta}_{/\EuScript{C}}$, thus \[|C|=|\EuScript{C}|\simeq\operatorname{colim}_{\EuScript{C}}*\simeq\operatorname{colim}_{C}*\simeq\operatorname{colim}_{\EuScript{C}^{\rm op}}*\simeq\operatorname{colim}((\mathbf{\Delta}_{/C})^{\rm op}\xrightarrow{*}\EuScript{S}).\]
	The desired result now follows from (\ref{lim-cosimp}), noting that $(C^{\rm op})_n=C_n$.
\end{proof}
\begin{remark}
	It might be worth to note the above extreme phenomenon: \emph{the subcategory $\tau_{\leqslant 0}\EuScript{S}\simeq\set$ generates $\EuScript{S}$ under geometric realisation}, hence the inclusion $\tau_{\leqslant 0}\EuScript{S}\subset\EuScript{S}$ is far from being closed under colimits.
\end{remark}\begin{corollary}\label{simpsubs-filt-colim-geomrel}
	Let $C$ be a simplicial set, and let $P$ be a family of simplicial subsets of $C$ viewed as a poset under inclusion that is filtered and their \myparen{$1$-categorical} colimit in $\sset$ is $C$. Then the filtered colimit in $\EuScript{S}$ of $|A|$ over $A\in P$ is canonically equivalent to $|C|$.
\end{corollary}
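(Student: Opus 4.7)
The strategy is to combine Theorem \ref{htyp-sset} with a Fubini-style interchange of colimits. By Theorem \ref{htyp-sset}, for each $A\in P$ we have $|A|\simeq\operatorname{colim}_{[n]\in\mathbf{\Delta}^{\rm op}}A_n$, and similarly $|C|\simeq\operatorname{colim}_{[n]\in\mathbf{\Delta}^{\rm op}}C_n$, where each $A_n$ and $C_n$ is regarded as a discrete space. The hypothesis that $C$ is the $1$-categorical colimit in $\sset$ of the simplicial subsets $A\in P$, computed levelwise, yields a bijection $C_n\cong\operatorname{colim}^{\set}_{A\in P}A_n$ for every $n\geqslant 0$.

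The first step is to upgrade this set-level identification to an equivalence in $\EuScript{S}$. Since $P$ is filtered and filtered colimits in $\EuScript{S}$ commute with finite limits, a filtered colimit of $0$-truncated spaces is again $0$-truncated; hence $\operatorname{colim}^{\EuScript{S}}_{A\in P}A_n$ is discrete. Applying the left adjoint $\pi_0\colon\EuScript{S}\to\set$ (which preserves all colimits) identifies this colimit with the set-theoretic one, and hence with $C_n$. Thus $C_n\simeq\operatorname{colim}^{\EuScript{S}}_{A\in P}A_n$ in $\EuScript{S}$ for every $n$.

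Substituting this into the formula for $|C|$ and interchanging the two colimits (over $\mathbf{\Delta}^{\rm op}$ and $P$), we obtain
\[
|C|\simeq\operatorname{colim}_{[n]\in\mathbf{\Delta}^{\rm op}}\operatorname{colim}_{A\in P}A_n\simeq\operatorname{colim}_{A\in P}\operatorname{colim}_{[n]\in\mathbf{\Delta}^{\rm op}}A_n\simeq\operatorname{colim}_{A\in P}|A|,
\]
as desired. The only subtle point is the passage from set-theoretic to $\infty$-categorical filtered colimits in the second paragraph; once this is in hand, the rest is a formal interchange of colimits together with a second application of Theorem \ref{htyp-sset}.
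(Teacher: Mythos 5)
Your proof is correct and follows essentially the same route as the paper's own argument: the paper likewise invokes $|C|\simeq\operatorname{colim}_{[n]\in\mathbf{\Delta}^{\rm op}}C_n$ from \Cref{htyp-sset}, the interchange of colimits, and the fact that the inclusion $\tau_{\leqslant 0}\EuScript{S}\hookrightarrow\EuScript{S}$ commutes with filtered colimits. Your second paragraph merely supplies the standard justification of that last ingredient (filtered colimits preserve $0$-truncatedness, then apply $\pi_0$), which the paper takes as known.
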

\begin{proof}
	We use $|C|\simeq\operatorname{colim}_{[n]\in\mathbf{\Delta}^{\rm op}}C_n$, colimits commute with each other, and that the inclusion $\tau_{\leqslant 0}\EuScript{S}\hookrightarrow\EuScript{S}$ of discrete spaces into spaces commutes with filtered colimits.
\end{proof}
\begin{prop}
	Let $C$ be a simplicial set and let $K\in\EuScript{S}$, then
	\[\operatorname{Fun}(C, K)\simeq K^C\simeq\operatorname{lim}(C\xrightarrow{\underline{K}}\EuScript{S})\simeq\operatorname{lim}(\mathbf{\Delta}_{/C}\xrightarrow{\underline{K}}\EuScript{S})\simeq\operatorname{lim}_{[n]\in\mathbf{\Delta}}K^{C_n}.\]
\end{prop}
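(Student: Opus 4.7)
The plan is to observe that all five quantities in the equivalence chain compute the same object, namely the mapping space $\operatorname{Map}_{\EuScript{S}}(|C|, K)$, and to reduce each identification to a result already established in the paper.

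First I would handle the two outer terms. The identification $\operatorname{Fun}(C, K) \simeq K^C$ is tautological ($K^C$ being our notation for the internal mapping simplicial set / functor $\infty$-category). Since $K \in \EuScript{S}$ is an $\infty$-groupoid, every functor $C \to K$ inverts all edges, so $\operatorname{Fun}(C, K)$ is itself an $\infty$-groupoid, and the adjunction $|-| \dashv (\EuScript{S} \hookrightarrow \operatorname{Cat}_{\infty})$ from \Cref{spcat-adj} gives
\[ \operatorname{Fun}(C, K) \simeq \operatorname{Map}_{\operatorname{Cat}_{\infty}}(C, K) \simeq \operatorname{Map}_{\EuScript{S}}(|C|, K). \]

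Next I would treat the two limit terms over $C$ and $\mathbf{\Delta}_{/C}$ in a uniform way. For any simplicial set $\EuScript{I}$ and constant diagram $\underline{K} \colon \EuScript{I} \to \EuScript{S}$, part~(2) of the preceding proposition (combined with the adjunction between $\operatorname{colim}_{\EuScript{I}}$ and the constant-diagram functor) gives
\[ \operatorname{lim}_{\EuScript{I}} \underline{K} \simeq \operatorname{Map}_{\operatorname{Fun}(\EuScript{I}, \EuScript{S})}(*, \underline{K}) \simeq \operatorname{Map}_{\EuScript{S}}(\operatorname{colim}_{\EuScript{I}} *, K). \]
Specialising to $\EuScript{I} = C$ and invoking \Cref{htyp-sset} to identify $\operatorname{colim}_{C} * \simeq |C|$, and similarly to $\EuScript{I} = \mathbf{\Delta}_{/C}$, we get that both $\operatorname{lim}(C \xrightarrow{\underline{K}} \EuScript{S})$ and $\operatorname{lim}(\mathbf{\Delta}_{/C} \xrightarrow{\underline{K}} \EuScript{S})$ are equivalent to $\operatorname{Map}_{\EuScript{S}}(|C|, K)$; for the latter we use that the comparison map $\tau_C \colon \mathbf{\Delta}_{/C} \to C$ exhibits $C$ as a localisation of $\mathbf{\Delta}_{/C}$ (as already extracted in the proof of \Cref{htyp-sset}), so $|\mathbf{\Delta}_{/C}| \simeq |C|$.

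Finally, the equivalence $\operatorname{lim}(\mathbf{\Delta}_{/C} \xrightarrow{\underline{K}} \EuScript{S}) \simeq \operatorname{lim}_{[n] \in \mathbf{\Delta}} K^{C_n}$ is a direct instance of \Cref{lim-catsimp}(3), applied with $\EuScript{V} = \EuScript{S}$ (which is complete), noting that the product $\prod_{\sigma \in C_n} K$ over the discrete set $C_n$ is precisely $K^{C_n}$. No step presents a real obstacle; the only point deserving a brief remark is the identification $|\mathbf{\Delta}_{/C}| \simeq |C|$, which is immediate from the localisation argument used in \Cref{htyp-sset}.
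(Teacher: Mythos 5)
Your proof is correct and follows essentially the same route as the paper: both arguments reduce every term in the chain to $\operatorname{Map}_{\EuScript{S}}(|C|, K)$ by applying $\operatorname{Map}_{\EuScript{S}}(-, K)$ to the equivalences of \Cref{htyp-sset} and using the adjunction $|-| \dashv (\EuScript{S}\hookrightarrow\operatorname{Cat}_\infty)$ together with the Joyal fibrant replacement of $C$. The only cosmetic difference is that you obtain the final term $\operatorname{lim}_{[n]\in\mathbf{\Delta}}K^{C_n}$ by citing \Cref{lim-catsimp}(3) rather than by mapping out of $\operatorname{colim}_{[n]\in\mathbf{\Delta}^{\rm op}}C_n$, which amounts to the same computation.
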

\begin{proof}
	Take a Joyal fibrant model $C\to\EuScript{C}$ in $\sset$, which is in particular a Kan-Quillen weak equivalence, so $K^{\EuScript{C}}\xrightarrow{\simeq}K^C$ in $\EuScript{S}$. Applying $\operatorname{Map}_{\EuScript{S}}(-, K)$ to the equivalences in \Cref{htyp-sset}, we use that $\operatorname{|-|}\colon\operatorname{Cat}_{\infty}\to\EuScript{S}$ is left adjoint to the inclusion to complete the proof.
\end{proof}
\begin{theorem}\label{colimits-simplicial}
	Let $C$ be a \myparen{small} simplicial set and let $\EuScript{E}$ be an $\infty$-category. Let $F\colon C\to\EuScript{E}$ be a diagram.
	
	For $\sigma\in C_n$ and $0\leqslant i\leqslant j\leqslant n$, we write $\sigma(i):=d_0\circ\cdots\circ\widehat{d_i}\circ\cdots\circ d_n(\sigma)\in C_0$, and write $(\sigma|_{[i, j]})_*$ for a chosen composite of the arrows $F(\sigma(i))\to F(\sigma(i+1))\to\cdots\to F(\sigma(j-1))\to F(\sigma(j))$ in $\EuScript{E}$.
	\begin{enumerate}[label=\myparen{\arabic*}]
		\item We have a natural equivalence
		\[\operatorname{lim}(C\xrightarrow{F}\EuScript{E})\simeq\operatorname{lim}_{[n]\in\mathbf{\Delta}}\prod_{\sigma\in C_n}F(\sigma(n))\]
		whenever all terms make sense, in which case, we have a limit diagram
		\newcommand{\stackspacesemi}{4}
		\newcommand{\stackk}[2][.8cm]{\;\tikz[baseline, yshift=.65ex]%
			{\foreach \k [evaluate=\k as \r using (.5*#2+.5-\k)*\stackspacesemi] in {1,...,#2}{%
					\k{\draw[->](0,\r pt)--(#1,\r pt);}
			}}\;}
		\[
		\begin{tikzcd}
			\operatorname{lim}(C\xrightarrow{F}\EuScript{E}) \arrow[r]   & \prod_{\sigma\in C_0}F(\sigma(0)) \stackk{2} \prod_{\sigma\in C_1}F(\sigma(1)) \stackk{3}\prod_{\sigma\in C_2}F(\sigma(2))\stackk{4}\cdots\cdots.
		\end{tikzcd}\]
		\item We have a natural equivalence
		\[\operatorname{colim}_{[n]\in\mathbf{\Delta}^{\rm op}}\coprod_{\sigma\in C_n}F(\sigma(0))\simeq\operatorname{colim}(C\xrightarrow{F}\EuScript{E})\]
		whenever all terms make sense, in which case, we have a colimit diagram
		\[
		\begin{tikzcd}
			\cdots\cdots\stackk{4}\coprod_{\sigma\in C_2}F(\sigma(0))\stackk{3}\coprod_{\sigma\in C_1}F(\sigma(0)) \stackk{2}\coprod_{\sigma\in C_0}F(\sigma(0)) \arrow[r]   & \operatorname{colim}(C\xrightarrow{F}\EuScript{E}).
		\end{tikzcd}\]
	\end{enumerate}
	
	Here, for a map $\alpha\colon[m]\to[n]$ in $\mathbf{\Delta}$, the relevant \myparen{\emph{co}}simplicial structure maps are those fit into the commutative diagrams
	\[\begin{tikzcd}[column sep=3em]
		\prod_{\sigma\in C_m}F(\sigma(m)) \ar[r, "\alpha_*"] \ar[d]  & \prod_{\rho\in C_n}F(\rho(n))  \arrow[d]\\
		F(\alpha^*\rho'(m)) \ar[r, "(\rho'|_{[\alpha(m), n]})_*"']      & F(\rho'(n))
	\end{tikzcd}
	\qquad
	\textrm{and}
	\qquad
	\begin{tikzcd}[column sep=3em]
		F(\rho'(0)) \ar[r, "(\rho'|_{[0, \alpha(0)]})_*"] \ar[d]  & F(\alpha^*\rho'(0))  \arrow[d]\\
		\coprod_{\rho\in C_n}F(\rho(0)) \ar[r, "\alpha^*"']      & \coprod_{\sigma\in C_m}F(\sigma(0))
	\end{tikzcd}\]
	for every $\rho'\in C_n$.
\end{theorem}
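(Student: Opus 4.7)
The plan is to reduce both statements to \Cref{lim-catsimp} by passing through the category of simplices $\mathbf{\Delta}_{/C}$.

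First, I would recall from the proof of \Cref{htyp-sset} that the last vertex map
\[\tau_C\colon\mathbf{\Delta}_{/C}\to C,\quad (\Delta^n\xrightarrow{\sigma}C)\mapsto\sigma(n),\]
exhibits $C$ as a localisation of $\mathbf{\Delta}_{/C}$ at the arrows sent to identities in $C$. Dualising via the self-duality $[n]\cong[n]^{\rm op}$ of $\mathbf{\Delta}$ (which identifies $\mathbf{\Delta}_{/C^{\rm op}}$ with $\mathbf{\Delta}_{/C}$) and taking opposites yields a first vertex map
\[\tau_C^0\colon(\mathbf{\Delta}_{/C})^{\rm op}\to C,\quad \sigma\mapsto\sigma(0),\]
which is again a localisation, since taking opposites preserves the marked-simplicial-set model structure.

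Next, I would invoke the standard fact that a localisation $L\colon X\to Y$ of $\infty$-categories preserves both limits and colimits of arbitrary $\EuScript{V}$-valued diagrams: the fully faithful precomposition $L^*\colon\operatorname{Fun}(Y, \EuScript{V})\hookrightarrow\operatorname{Fun}(X, \EuScript{V})$ contains every constant diagram in its essential image, so for any $G\colon Y\to\EuScript{V}$ the canonical maps $\operatorname{colim}(G\circ L)\to\operatorname{colim}G$ and $\operatorname{lim}G\to\operatorname{lim}(G\circ L)$ are equivalences. Applying this to $F\colon C\to\EuScript{E}$ along $\tau_C$ gives $\operatorname{lim}(C\xrightarrow{F}\EuScript{E})\simeq\operatorname{lim}(\mathbf{\Delta}_{/C}\xrightarrow{F\circ\tau_C}\EuScript{E})$, and then \Cref{lim-catsimp}(2) applied to the diagram $\sigma\mapsto F(\sigma(n))$ delivers (1); applying the same principle along $\tau_C^0$ gives $\operatorname{colim}(C\xrightarrow{F}\EuScript{E})\simeq\operatorname{colim}((\mathbf{\Delta}_{/C})^{\rm op}\xrightarrow{F\circ\tau_C^0}\EuScript{E})$, and \Cref{lim-catsimp}(1) applied to $\sigma\mapsto F(\sigma(0))$ delivers (2).

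Finally, I would identify the (co)simplicial structure maps by tracing through the cartesian fibration $\mathbf{\Delta}_{/C}\to\mathbf{\Delta}$ (resp. cocartesian fibration $(\mathbf{\Delta}_{/C})^{\rm op}\to\mathbf{\Delta}^{\rm op}$) of \Cref{catsimp-cart} together with the explicit action of $\tau_C$ (resp. $\tau_C^0$) on morphisms: a morphism $\alpha\colon[m]\to[n]$ in $\mathbf{\Delta}$ and a simplex $\rho'\in C_n$ together give the arrow $\rho'|_{[\alpha(m), n]}$ (resp. $\rho'|_{[0, \alpha(0)]}$) in $C$, matching the stated diagrams. The main obstacle is Step 1, namely cleanly justifying that the first vertex map $\tau_C^0$ is a localisation; the duality argument requires some care in spelling out the self-duality $\mathbf{\Delta}_{/C^{\rm op}}\cong\mathbf{\Delta}_{/C}$ and checking that the opposite of the last vertex localisation for $C^{\rm op}$ matches the explicit formula $\sigma\mapsto\sigma(0)$ on $(\mathbf{\Delta}_{/C})^{\rm op}$. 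Everything else is formal once the two localisation statements are available.
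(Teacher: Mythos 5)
Your proposal is correct and lands on essentially the same two pillars as the paper --- the localisation property of the (first/last) vertex map out of $\mathbf{\Delta}_{/C}$ and the fibrewise decomposition of (co)limits over the (co)cartesian fibration $\mathbf{\Delta}_{/C}\to\mathbf{\Delta}$ --- but it packages them differently. The paper first reduces to $\EuScript{E}=\EuScript{S}$ by testing against $\operatorname{Map}_{\EuScript{E}}(-,e)$, deduces (1) from (2) by passing to opposites, and then combines \Cref{htyp-sset} (the presentation $|C|\simeq\operatorname{colim}_{[n]}C_n$ with cocone maps $\sigma\mapsto\sigma(0)$) with \Cref{lim-decomp}; you instead run both statements directly through \Cref{lim-catsimp} after transporting $F$ along the two vertex-map localisations. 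Your route is arguably cleaner: it avoids the paper's slightly delicate step of left Kan extending $F$ to $|C|$ (where one must be careful that only the colimit, not the pointwise values, is preserved by that extension), and the ``obstacle'' you worry about --- that the first vertex map $(\mathbf{\Delta}_{/C})^{\rm op}\to C$ is a localisation --- is genuine but unproblematic: your duality argument through $\mathbf{\Delta}_{/C^{\rm op}}\cong\mathbf{\Delta}_{/C}$ is exactly right (localisations are a self-dual notion, and ${\rm op}$ is an auto-equivalence of the marked model structure), and the paper itself invokes this very fact later when proving that $C$ is $n$-sifted iff $\operatorname{N}(\mathbf{\Delta}_{/C})^{\rm op}$ is. The one loose end you should tie up is the ``whenever all terms make sense'' clause: \Cref{lim-catsimp} (via \Cref{lim-decomp}) is stated only for (co)complete $\EuScript{V}$, so to get the statement in the generality claimed you still need the paper's opening reduction --- test existence and value of the colimit against $\operatorname{Map}_{\EuScript{E}}(-,e)$ to replace $\EuScript{E}$ by the bicomplete $\EuScript{S}$ --- or some equivalent existence-transfer argument; your localisation step transfers existence across $\tau_C^0$ but not across the reshaping step. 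With that sentence added, and the routine (and in the paper likewise only sketched) verification of the explicit structure maps via \Cref{catsimp-cart}, your argument is complete.
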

\begin{proof}
	The statement (1) follows from (2) is by taking opposite. We only prove (2). Since colimits in $\EuScript{E}$, exist or not, can be tested on applying $\operatorname{Map}_{\EuScript{E}}(-, e)$ for objects $e\in\EuScript{E}$, we can even assume $\EuScript{E}=\EuScript{S}$, with the merit of being bicomplete.
	
	In this case, we left Kan extend $F$ to a functor $|C|\to\EuScript{S}$, which is just restriction of $F$, and they share the same colimit. By \Cref{htyp-sset} and \Cref{lim-decomp}, $\operatorname{colim}(|C|\to\EuScript{S})$ is identified with $\operatorname{colim}_{[n]\in\mathbf{\Delta}^{\rm op}}\operatorname{colim}F|_{C_n}$. As $\operatorname{colim}F|_{C_n}\simeq\coprod_{\sigma\in C_n}F(\sigma(0))$, we are done.
\end{proof}
\begin{remark}
	\begin{enumerate}[label=(\arabic*)]
		\item This recovers the familiar result of writing a (co)limit as a (co)equaliser of (co)products in ordinary categories, in view of \Cref{$n$-cofinal-lim-pres} and \Cref{simp-ncofinal} below.
		\item If $C$ is (the nerve of) the category of simplices of a simplicial set, the above result recovers \Cref{lim-catsimp}, once we discard the correct redundant terms (a first step is to use $\mathbf{\Delta}_{\rm s}$ instead of $\mathbf{\Delta}$).
	\end{enumerate}
\end{remark}

\section{$n$-cofinality}

In this section, we discuss $n$-cofinality, which will be a good replacement for cofinality in the setting of $(n, 1)$-categories, and give a few characterisations for it. Some examples of $n$-cofinal functors will be given.
\begin{definition}[$n$-cofinality]\label{$n$-cofinality}
	Let $v\colon K'\to K$ be a map of (small) simplicial sets with $K$ an $\infty$-category, let $-2\leqslant n\leqslant\infty$. We say that $v\colon K'\to K$ is \emph{right $n$-cofinal} if for every object $x\in K$, the simplicial set $K'_{x/}$ is $n$-connective.
	
	We say that $v\colon K'\to K$ is \emph{left $n$-cofinal} if $v^{\rm op}\colon K'^{\rm op}\to K^{\rm op}$ is right $n$-cofinal, i.e., for every object $x\in K$, the simplicial set $K'_{/x}$ is $n$-connective.
\end{definition}

If $v\colon K'\to K$ is left/right $n$-cofinal, and $K=\varnothing$, then $K'=\varnothing$; if $K\ne\varnothing$, then $K'\ne\varnothing$.

The following result characterising $n$-cofinality is a generalisation of \cite[Theorem 6.4.5]{CiHi} and \cite[Theorem 2.19]{COJ} with essentially the same proof (for (1)(2)(3), at least). Here we use the notion of $n$-cofinality from \Cref{$n$-cofinality} and recall that an $(n, 1)$-category is an $\infty$-category with all mapping spaces $(n-1)$-truncated.
\begin{theorem}\label{$n$-cofinal-lim-pres}
	Let $\EuScript{C, D}$ be small $\infty$-categories, let $p\colon\EuScript{C}\to\EuScript{D}$ be a functor. Let $-1\leqslant n\leqslant\infty$ and let $\kappa$ be a regular cardinal such that $\EuScript{C}$ and $\EuScript{D}$ are $\kappa$-small. Then the following are equivalent.
	\begin{enumerate}[label=\myparen{\arabic*}]
		\item The functor $p\colon\EuScript{C}\to\EuScript{D}$ is left $n$-cofinal: for every object $d\in\EuScript{D}$, the simplicial set $\EuScript{C}_{/d}$ is $n$-connective.
		\item The functor $p$ respects limits in $(n, 1)$-categories: for every $(n, 1)$-category $\EuScript{E}$ and every functor $F\colon\EuScript{D}\to\EuScript{E}$, the comparison map of limits
		\[\operatorname{lim}(\EuScript{D}\xrightarrow{F}\EuScript{E})\to\operatorname{lim}(\EuScript{C}\xrightarrow{p}\EuScript{D}\xrightarrow{F}\EuScript{E})\]
		is an equivalence \myparen{in the sense that if one limit exists, so does the other, and the map is an equivalence}. Alternatively, for every limit diagram $\overline{F}\colon \EuScript{D}^{\triangleleft}\to\EuScript{E}$, the induced diagram $\EuScript{C}^{\triangleleft}\xrightarrow{p^{\triangleleft}} \EuScript{D}^{\triangleleft}\xrightarrow{\overline{F}}\EuScript{E}$ is also a limit diagram.
		\item The functor $p$ respects limits in $\tau_{\leqslant n-1}\EuScript{S}$: for every functor $F\colon\EuScript{D}\to\tau_{\leqslant n-1}\EuScript{S}$, the comparison map of limits
		\[\operatorname{lim}(\EuScript{D}\xrightarrow{F}\tau_{\leqslant n-1}\EuScript{S})\to\operatorname{lim}(\EuScript{C}\xrightarrow{p}\EuScript{D}\xrightarrow{F}\tau_{\leqslant n-1}\EuScript{S})\]
		is an equivalence.
		\item The functor $p^{\rm op}\colon\EuScript{C}^{\rm op}\to\EuScript{D}^{\rm op}$ respects colimits in $(n, 1)$-categories: for every $(n, 1)$-category $\EuScript{E}$ and every functor $F\colon\EuScript{D}^{\rm op}\to\EuScript{E}$, the comparison map of colimits
		\[\operatorname{colim}(\EuScript{C}^{\rm op}\xrightarrow{p^{\rm op}}\EuScript{D}^{\rm op}\xrightarrow{F}\EuScript{E})\to\operatorname{colim}(\EuScript{D}^{\rm op}\xrightarrow{F}\EuScript{E})\]
		is an equivalence \myparen{in the sense that if one colimit exists, so does the other, and the map is an equivalence}. Alternatively, for every colimit diagram $\overline{F}\colon(\EuScript{D}^{\rm op})^{\triangleright}\to\EuScript{E}$, the induced diagram $(\EuScript{C}^{\rm op})^{\triangleright}\xrightarrow{(p^{\rm op})^{\triangleright}} (\EuScript{D}^{\rm op})^{\triangleright}\xrightarrow{\overline{F}}\EuScript{E}$ is also a colimit diagram.
		\item The functor $p^{\rm op}\colon\EuScript{C}^{\rm op}\to\EuScript{D}^{\rm op}$ respects colimits in $\operatorname{Ind}_{\kappa}(\EuScript{E})$ for every small $(n, 1)$-category $\EuScript{E}$.
		\item The functor $p^{\rm op}\colon\EuScript{C}^{\rm op}\to\EuScript{D}^{\rm op}$ respects colimits in $\tau_{\leqslant n-1}\EuScript{P}(\EuScript{E})$ for every small $(n, 1)$-category $\EuScript{E}$: for every functor $F\colon\EuScript{D}^{\rm op}\to\EuScript{P}(\EuScript{E})$, the comparison map of colimits
		\[\operatorname{colim}(\EuScript{C}^{\rm op}\xrightarrow{p^{\rm op}}\EuScript{D}^{\rm op}\xrightarrow{F}\EuScript{P}(\EuScript{E}))\to\operatorname{colim}(\EuScript{D}^{\rm op}\xrightarrow{F}\EuScript{P}(\EuScript{E}))\]
		is $n$-connective.
		\item The functor $p^{\rm op}\colon\EuScript{C}^{\rm op}\to\EuScript{D}^{\rm op}$ respects colimits in every $n$-topos \myparen{an $\infty$-category equivalent to one of the form $\tau_{\leqslant n-1}\EuScript{X}$ for an $\infty$-topos $\EuScript{X}$}.
		\item The functor $p^{\rm op}\colon\EuScript{C}^{\rm op}\to\EuScript{D}^{\rm op}$ respects colimits in $\tau_{\leqslant n-1}\EuScript{S}$: for every functor $F\colon\EuScript{D}^{\rm op}\to\tau_{\leqslant n-1}\EuScript{S}$, the comparison map of colimits
		\[\operatorname{colim}(\EuScript{C}^{\rm op}\xrightarrow{p^{\rm op}}\EuScript{D}^{\rm op}\xrightarrow{F}\tau_{\leqslant n-1}\EuScript{S})\to\operatorname{colim}(\EuScript{D}^{\rm op}\xrightarrow{F}\tau_{\leqslant n-1}\EuScript{S})\]
		is an equivalence.
		\item For every object $d\in\EuScript{D}$, the comparison map of colimits
		\[\operatorname{colim}(\EuScript{C}^{\rm op}\xrightarrow{p^{\rm op}}\EuScript{D}^{\rm op}\xrightarrow{\tau_{\leqslant n-1}\operatorname{Map}_{\EuScript{D}}(-, d)}\tau_{\leqslant n-1}\EuScript{S})\to\operatorname{colim}(\EuScript{D}^{\rm op}\xrightarrow{\tau_{\leqslant n-1}\operatorname{Map}_{\EuScript{D}}(-, d)}\tau_{\leqslant n-1}\EuScript{S})\]
		is an equivalence.
	\end{enumerate}
\end{theorem}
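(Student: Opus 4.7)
The plan is to organise the nine conditions around the core identity $(1) \Leftrightarrow (9)$, which comes directly from \Cref{colim-repble}; all other equivalences then follow by standard manipulations (opposites, Yoneda embedding, Yoneda density, and specialisation of the test $\infty$-category). Concretely, \Cref{colim-repble} gives equivalences
\[\operatorname{colim}_{\EuScript{C}^{\rm op}}(p^*\operatorname{Map}_{\EuScript{D}}(-,d)) \simeq |\EuScript{C}_{/d}|, \qquad \operatorname{colim}_{\EuScript{D}^{\rm op}}\operatorname{Map}_{\EuScript{D}}(-,d) \simeq *\]
in $\EuScript{S}$; since $\tau_{\leqslant n-1}$ is a left adjoint it commutes with colimits, so the comparison map in $(9)$ is identified with $\tau_{\leqslant n-1}|\EuScript{C}_{/d}| \to *$, which is an equivalence in $\tau_{\leqslant n-1}\EuScript{S}$ if and only if $\EuScript{C}_{/d}$ is $n$-connective for every $d$ --- exactly condition $(1)$. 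The argument for $(1) \Leftrightarrow (2) \Leftrightarrow (3)$ in this framework mirrors the classical proof of \cite[Theorem 6.4.5]{CiHi} and \cite[Theorem 2.19]{COJ}, with truncation levels tracked throughout.

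Next, the equivalences among $(2), (3), (4), (8)$: $(2) \Leftrightarrow (4)$ is by taking opposites, using that the opposite of an $(n,1)$-category is again one. Then $(2) \Rightarrow (3)$ is immediate since $\tau_{\leqslant n-1}\EuScript{S}$ is itself an $(n,1)$-category, and $(3) \Rightarrow (2)$ proceeds via the Yoneda embedding $\EuScript{E} \hookrightarrow \operatorname{Fun}(\EuScript{E}^{\rm op}, \tau_{\leqslant n-1}\EuScript{S})$ --- well-defined because mapping spaces in an $(n,1)$-category are $(n-1)$-truncated, and which both preserves and reflects limits. By the same Yoneda argument in the dual, $(4) \Leftrightarrow (8)$. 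The crucial bridge $(9) \Rightarrow (8)$ uses Yoneda density in $\tau_{\leqslant n-1}\EuScript{P}(\EuScript{D})$: every $F\colon \EuScript{D}^{\rm op} \to \tau_{\leqslant n-1}\EuScript{S}$ is a small colimit of truncated representables $\tau_{\leqslant n-1}\operatorname{Map}_{\EuScript{D}}(-, d)$, and the claim reduces to $(9)$ since colimits commute both with each other and with restriction along $p^{\rm op}$; $(8) \Rightarrow (9)$ is a trivial specialisation.

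The topos-type conditions $(5), (6), (7)$ are sandwiched between $(4)$ and $(8)$. Each of $\operatorname{Ind}_{\kappa}(\EuScript{E})$ (for a small $(n,1)$-category $\EuScript{E}$, using that filtered colimits preserve $(n-1)$-truncated mapping spaces), $\tau_{\leqslant n-1}\EuScript{P}(\EuScript{E})$ (tautologically), and every $n$-topos (by definition) is an $(n,1)$-category, so $(4)$ implies each of $(5), (6), (7)$; the ``$n$-connective'' formulation in $(6)$ is precisely the condition that the comparison map becomes an equivalence after applying the reflective truncation $\tau_{\leqslant n-1}$. In the converse direction, each of $(5), (6), (7)$ implies $(8)$ by specialising the test category: $(6) \Rightarrow (8)$ via $\EuScript{E} = \Delta^0$ (so $\tau_{\leqslant n-1}\EuScript{P}(\Delta^0) = \tau_{\leqslant n-1}\EuScript{S}$); $(7) \Rightarrow (6)$ since $\tau_{\leqslant n-1}\EuScript{P}(\EuScript{E})$ is itself an $n$-topos; and $(5) \Rightarrow (8)$ by taking $\EuScript{E} = \tau_{\leqslant n-1}\EuScript{S}^{<\kappa}$, so that $\operatorname{Ind}_{\kappa}(\EuScript{E}) \simeq \tau_{\leqslant n-1}\EuScript{S}$ absorbs all the relevant $\kappa$-small target data.

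The main obstacle I anticipate lies in the careful bookkeeping around $(5), (6), (7)$: verifying stability of the $(n,1)$-property under $\operatorname{Ind}_\kappa$, correctly handling the distinction between ``$n$-connective'' and ``equivalent after $\tau_{\leqslant n-1}$'' in $(6)$, and justifying the identification $\operatorname{Ind}_{\kappa}(\tau_{\leqslant n-1}\EuScript{S}^{<\kappa}) \simeq \tau_{\leqslant n-1}\EuScript{S}$ used in $(5) \Rightarrow (8)$. By contrast, the core equivalence $(1) \Leftrightarrow (9)$ is essentially a one-line computation once \Cref{colim-repble} and the exactness of $\tau_{\leqslant n-1}$ are in hand.
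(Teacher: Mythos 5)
Your proposal is correct in substance and closes the full cycle of implications, but it reorganises the argument in several places relative to the paper. The shared core is the same: $(1)\Leftrightarrow(2)\Leftrightarrow(3)$ follows the classical cofinality argument (the paper runs it through the adjunction $p_!\dashv p^*$ and the computation $p_!(*)(d)\simeq\tau_{\leqslant n-1}|\EuScript{C}_{/d}|$), $(2)\Leftrightarrow(4)$ is by taking opposites, and \Cref{colim-repble} is the bridge between condition $(9)$ and the connectivity of $\EuScript{C}_{/d}$ --- the paper uses it only for $(9)\Rightarrow(1)$, while you promote it to the full equivalence $(1)\Leftrightarrow(9)$, which is fine. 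Where you genuinely diverge is in the treatment of $(5),(6),(7)$ and the return path: the paper proves $(6)\Rightarrow(5)\Rightarrow(4)$ following \cite[Proposition A.1]{HP22} together with \cite[Proposition 5.3.5.14]{HTT}, plus $(4)\Rightarrow(7)\Rightarrow(6)$ and $(4)\Rightarrow(8)\Rightarrow(9)\Rightarrow(1)$; you instead funnel $(5),(6),(7)$ into $(8)$ by specialising the test category ($\EuScript{E}=\Delta^0$ for $(6)\Rightarrow(8)$, and $\operatorname{Ind}_{\kappa}((\tau_{\leqslant n-1}\EuScript{S})^{\kappa})\simeq\tau_{\leqslant n-1}\EuScript{S}$ for $(5)\Rightarrow(8)$), and you prove $(9)\Rightarrow(8)$ by Yoneda density (every object of $\tau_{\leqslant n-1}\EuScript{P}(\EuScript{D})$ is a colimit of truncated representables, and both sides of the comparison map are colimit-preserving in $F$). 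Both devices are sound and arguably more self-contained than the paper's appeal to \cite{HP22}, at the cost of having to verify the $\operatorname{Ind}_{\kappa}$ identification you flag yourself. One imprecision should be fixed: your claim that $(8)\Rightarrow(4)$ follows ``by the same Yoneda argument in the dual'' is not right as stated --- testing colimits in an $(n,1)$-category $\EuScript{E}$ by applying $\operatorname{Map}_{\EuScript{E}}(-,e)$ converts them into \emph{limits} valued in $\tau_{\leqslant n-1}\EuScript{S}$, so this reduces $(4)$ to the limit condition $(3)$, not to the colimit condition $(8)$ (the covariant Yoneda embedding does not preserve colimits). This does not break the proof, since your cycle already closes via $(8)\Rightarrow(9)\Rightarrow(1)\Leftrightarrow(2)\Leftrightarrow(4)$, but that particular arrow should be deleted or rerouted.
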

We start with some remarks that will be useful in the proof. Let $q\colon\EuScript{D}\to\Delta^0$ be the projection, then $\operatorname{lim}(\EuScript{D}\xrightarrow{F}\EuScript{E})\simeq q_*F, \operatorname{lim}(\EuScript{C}\xrightarrow{p}\EuScript{D}\xrightarrow{F}\EuScript{E})\simeq (q\circ p)_*(F\circ p)$. If $\EuScript{E}$ is complete, the comparison map of limits can be identified with the map $q_*F\to q_*(p_*p^*)F$ induced by the unit of the adjunction $p^*\dashv p_*$.

Moreover, for every functor $F\colon \EuScript{D}\to\tau_{\leqslant n-1}\EuScript{S}$,
\[\operatorname{lim}(\EuScript{D}\xrightarrow{F}\tau_{\leqslant n-1}\EuScript{S})\simeq\operatorname{Map}_{\operatorname{Fun}(\EuScript{D}, \tau_{\leqslant n-1}\EuScript{S})}(*, F).\]
\begin{proof}
	Since limits in $\EuScript{E}$, exist or not, can be tested on applying $\operatorname{Map}_{\EuScript{E}}(e,-)$ for objects $e\in\EuScript{E}$ (note that $\tau_{\leqslant n-1}\EuScript{S}$ is closed under limits in $\EuScript{S}$), (2) and (3) are equivalent. Since an $\infty$-category $\EuScript{E}$ is an $(n, 1)$-category if and only if its opposite $\EuScript{E}^{\rm op}$ is an $(n, 1)$-category, (2) and (4) are equivalent.
	
	By the previous remark and adjunction, (3) is equivalent to
	\[\operatorname{Map}_{\operatorname{Fun}(\EuScript{D}, \tau_{\leqslant n-1}\EuScript{S})}(*, F)\xrightarrow[p^*]{\sim}\operatorname{Map}_{\operatorname{Fun}(\EuScript{C}, \EuScript{S})}(*, F\circ p)\simeq\operatorname{Map}_{\operatorname{Fun}(\EuScript{C}, \EuScript{S})}(*, p^*F)\simeq\operatorname{Map}_{\operatorname{Fun}(\EuScript{D}, \tau_{\leqslant n-1}\EuScript{S})}(p_!(*), F)\]
	for all $F\colon\EuScript{D}\to\tau_{\leqslant n-1}\EuScript{S}$.
	
	This is the case if and only if the (unique) map $p_!(*)\to *$ is an equivalence in $\operatorname{Fun}(\EuScript{D}, \tau_{\leqslant n-1}\EuScript{S})$. Since $p_!(*)(d)\simeq\operatorname{colim}_{\EuScript{C}_{/d}}*\simeq\operatorname{colim}_{\EuScript{C}_{/d}} \tau_{\leqslant n-1}*\simeq \tau_{\leqslant n-1}\operatorname{colim}_{\EuScript{C}_{/d}}*\simeq \tau_{\leqslant n-1}|\EuScript{C}_{/d}|\in\tau_{\leqslant n-1}\EuScript{S}$, (3) holds if and only if for every object $d\in\EuScript{D}$, $ \tau_{\leqslant n-1}|\EuScript{C}_{/d}|\simeq*$ in $\tau_{\leqslant n-1}\EuScript{S}$, i.e. $\EuScript{C}_{/d}$ is $n$-connective. So (1) and (3) are equivalent.
	
	The implication $(6)\Rightarrow(5)\Rightarrow(4)$ can be proved along the same line as the proof of \cite[Proposition A.1]{HP22}, where we need \cite[Proposition 5.3.5.14]{HTT} (or \Cref{colim-yon} (2) below) for $(5)\Rightarrow(4)$. We conclude by noting $(4)\Rightarrow(7)\Rightarrow(6)$ and $(4)\Rightarrow(8)\Rightarrow(9)\Rightarrow(1)$, where we apply \Cref{colim-repble} for the last implication.
\end{proof}
\begin{remark}
	By possibly enlarging universe, we may assume that the $(n, 1)$-categories $\EuScript{E}$ appear above are small. Then $\EuScript{P}(\EuScript{E})$ and $\operatorname{Ind}_{\kappa}(\EuScript{E})$ are presentable.
	
	The formulation of (4)(5)(6) are of course inspired by the proof of \cite[Proposition A.1]{HP22}. The statements of (1)(2)(4) are most general, while the condition (3)(9) are (arguably) easy to verify in practice.
\end{remark}
\begin{remark}\label{cofintopt-cst}
Since (co)limits, and more generally Kan extensions, are invariant upon replacing the source simplicial sets by categorically equivalent ones, by taking a functorial Joyal fibrant replacement (and suitable Kan extensions along them), we can extend the notion of $n$-cofinality, as well as \Cref{$n$-cofinal-lim-pres} to \emph{all maps of simplicial sets}. In particular, for a simplicial set $K$, the map $K\to\Delta^0$ is left/right $n$-cofinal if and only if $K\ne\varnothing$ and $K$ is $n$-connective.

If $\EuScript{E}$ is an $(n, 1)$-category and $K$ is (non-empty and) $n$-connective, the (co)limit of a constant functor $K\to\Delta^0\xrightarrow{a}\EuScript{E}$ is given by $a\in\EuScript{E}$. It is also easy to see that $\operatorname{colim}(K\xrightarrow{*}\tau_{\leqslant n-1}\EuScript{S})\simeq\tau_{\leqslant n-1}|K|$, so if $\operatorname{colim}(K\xrightarrow{*}\tau_{\leqslant n-1}\EuScript{S})\simeq*$, then $K$ is $n$-connective --- a non-empty simplicial set $K$ is $n$-connective if and only if the functor $\operatorname{colim}\colon\operatorname{Fun}(K, \tau_{\leqslant n-1}\EuScript{S})\to\tau_{\leqslant n-1}\EuScript{S}$ preserves the terminal object (empty product); cf. \Cref{sift-finprod}.

As $(n, 1)$-categories are $(n+1, 1)$-categories, \emph{left $(n+1)$-cofinal maps are left $n$-cofinal}.
\end{remark}
\begin{corollary}
	Let $f\colon A\to B$ be a right $n$-cofinal map in $\sset$, $-1\leqslant n\leqslant\infty$. Then $|A|$ is $n$-connective if and only if $|B|$ is $n$-connective.
\end{corollary}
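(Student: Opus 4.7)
The plan is to apply \Cref{$n$-cofinal-lim-pres}, extended to arbitrary simplicial sets as in \Cref{cofintopt-cst}, to the constant diagram on the one-point space. By taking opposites in the definition, $f\colon A\to B$ is right $n$-cofinal if and only if $f^{\rm op}\colon A^{\rm op}\to B^{\rm op}$ is left $n$-cofinal, so condition~(8) of \Cref{$n$-cofinal-lim-pres} applied to $f^{\rm op}$ tells us that, for every functor $F\colon B\to\tau_{\leqslant n-1}\EuScript{S}$, the comparison map
\[\operatorname{colim}(A\xrightarrow{f}B\xrightarrow{F}\tau_{\leqslant n-1}\EuScript{S})\longrightarrow\operatorname{colim}(B\xrightarrow{F}\tau_{\leqslant n-1}\EuScript{S})\]
is an equivalence in $\tau_{\leqslant n-1}\EuScript{S}$.

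Specialising to the constant functor $F=*$, I would use that the truncation $\tau_{\leqslant n-1}\colon\EuScript{S}\to\tau_{\leqslant n-1}\EuScript{S}$ is a left adjoint (hence preserves colimits) together with the identification $\operatorname{colim}(K\xrightarrow{*}\EuScript{S})\simeq|K|$ from \Cref{htyp-sset}. Combined, these give
\[\tau_{\leqslant n-1}|A|\;\simeq\;\operatorname{colim}(A\xrightarrow{*}\tau_{\leqslant n-1}\EuScript{S})\;\xrightarrow{\simeq}\;\operatorname{colim}(B\xrightarrow{*}\tau_{\leqslant n-1}\EuScript{S})\;\simeq\;\tau_{\leqslant n-1}|B|.\]
Since $|K|$ is $n$-connective precisely when $\tau_{\leqslant n-1}|K|\simeq *$ in $\tau_{\leqslant n-1}\EuScript{S}$ (the non-emptiness requirement for $n\geqslant 0$ being automatic, as right $n$-cofinality of $f$ already forces $A\ne\varnothing$ whenever $B\ne\varnothing$ through the fibres $A_{x/}$), the displayed equivalence immediately yields the desired biconditional.

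There is essentially no obstacle here: the entire argument is a single application of characterisation~(8) to the terminal diagram. The only point that needs checking is the passage from the $\infty$-categorical statement of \Cref{$n$-cofinal-lim-pres} to arbitrary simplicial sets, but this is precisely what \Cref{cofintopt-cst} arranges via Joyal fibrant replacement, under which both the cofinality hypothesis and the quantities $|A|$, $|B|$ are preserved.
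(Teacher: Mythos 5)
Your proof is correct and follows essentially the same route as the paper: both apply the simplicial-set version of \Cref{$n$-cofinal-lim-pres} (the paper cites clause (6), you cite the equivalent clause (8)) to the terminal diagram to conclude that $\tau_{\leqslant n-1}|A|\to\tau_{\leqslant n-1}|B|$ is an equivalence in $\tau_{\leqslant n-1}\EuScript{S}$, and then read off $n$-connectivity from the truncation. The extra details you supply (the identification $\operatorname{colim}(K\xrightarrow{*}\tau_{\leqslant n-1}\EuScript{S})\simeq\tau_{\leqslant n-1}|K|$ and the passage to arbitrary simplicial sets via \Cref{cofintopt-cst}) are exactly the ingredients the paper leaves implicit.
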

\begin{proof}
	By (simplicial set version of) \Cref{$n$-cofinal-lim-pres} (6), $\tau_{\leqslant n-1}|f|\colon\tau_{\leqslant n-1}|A|\to\tau_{\leqslant n-1}|B|$ is an equivalence in $\tau_{\leqslant n-1}\EuScript{S}$.
\end{proof}
\begin{corollary}\label{ncof-comp}
	Let $A\xrightarrow{f}B\xrightarrow{g}C$ be maps of simplicial sets, let $-1\leqslant n\leqslant\infty$.
	\begin{enumerate}[label=\myparen{\arabic*}]
		\item Assume that $f$ is left $n$-cofinal. Then $g$ is left $n$-cofinal if and only if the composite map $g\circ f$ is left $n$-cofinal. In particular, left $n$-cofinal maps of simplicial sets are closed under composition.
		\item Assume that $g$ is a full embedding of $\infty$-categories. If $g\circ f$ is left $n$-cofinal,
		then so are $f$ and $g$.
	\end{enumerate}
\end{corollary}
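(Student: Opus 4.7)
The plan is to reduce part (1) to the characterization of left $n$-cofinality via limit preservation in $(n, 1)$-categories, namely condition (2) of \Cref{$n$-cofinal-lim-pres}, combined with 2-out-of-3; and to prove part (2) by directly identifying slices using that $g$ is fully faithful.

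For (1), I will consider an arbitrary functor $F\colon C\to\EuScript{E}$ into an $(n, 1)$-category $\EuScript{E}$ and factor the canonical comparison map as
\[\operatorname{lim}(C\xrightarrow{F}\EuScript{E})\to\operatorname{lim}(B\xrightarrow{F\circ g}\EuScript{E})\to\operatorname{lim}(A\xrightarrow{F\circ g\circ f}\EuScript{E}).\]
The second arrow is an equivalence since $f$ is left $n$-cofinal (applied to $F\circ g$). Then 2-out-of-3 says the composite is an equivalence if and only if the first map is, and letting $F$ range over all such functors this gives, via \Cref{$n$-cofinal-lim-pres}, that $g\circ f$ is left $n$-cofinal if and only if $g$ is. The ``in particular'' assertion is then immediate.

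For (2), my plan is to first prove that $f$ is left $n$-cofinal, after which part (1) forces $g$ to be left $n$-cofinal. To check $f$ is left $n$-cofinal, I fix an object $b\in B$ and aim to show $A_{/b}$ is $n$-connective. The right fibrations $B_{/b}\to B$ and $C_{/g(b)}\to C$ are classified by the representables $\operatorname{Map}_B(-, b)$ and $\operatorname{Map}_C(-, g(b))$; by base change compatibility of unstraightening (\Cref{unsteq}), the pullback $B\times_C C_{/g(b)}\to B$ is classified by $\operatorname{Map}_C(g(-), g(b))$, which equals $\operatorname{Map}_B(-, b)$ by fully-faithfulness of $g$. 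Hence $B_{/b}\simeq B\times_C C_{/g(b)}$ over $B$. Associativity of pullbacks then gives
\[A_{/b}=A\times_B B_{/b}\simeq A\times_B(B\times_C C_{/g(b)})\simeq A\times_C C_{/g(b)}=A_{/g(b)},\]
with the rightmost slice formed via $g\circ f$. Since $g\circ f$ is left $n$-cofinal, $A_{/g(b)}$ is $n$-connective, and so is $A_{/b}$.

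The main technical step is the identification $B_{/b}\simeq B\times_C C_{/g(b)}$, which rests on the unstraightening correspondence together with fully-faithfulness of $g$; the rest is formal bookkeeping. Throughout, one should silently invoke \Cref{cofintopt-cst} to pass from maps of simplicial sets to functors between $\infty$-categories via Joyal fibrant replacement.
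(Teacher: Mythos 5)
Your argument is correct. Part (1) is essentially the paper's proof: the paper simply invokes \Cref{$n$-cofinal-lim-pres}~(3) and leaves the two-out-of-three factorisation implicit, which you spell out. One small caution: since you quantify over arbitrary $(n,1)$-categories $\EuScript{E}$ via condition (2) of \Cref{$n$-cofinal-lim-pres}, you inherit the ``if one limit exists, so does the other'' bookkeeping; it is cleaner to run the same two-out-of-three argument with $\EuScript{E}=\tau_{\leqslant n-1}\EuScript{S}$ (condition (3)), which is complete, so all three limits exist unconditionally. Part (2) is where you genuinely diverge. The paper stays on the ``limit-preservation'' side of the characterisation: it takes an arbitrary $F\colon B\to\tau_{\leqslant n-1}\EuScript{S}$, forms the right Kan extension $g_*F$ along the fully faithful $g$, and uses the chain $\operatorname{lim}F\simeq\operatorname{lim}g_*F\simeq\operatorname{lim}(g\circ f)^*g_*F\simeq\operatorname{lim}f^*g^*g_*F\simeq\operatorname{lim}f^*F$ to verify condition (3) for $f$ directly. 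You instead work at the level of the definition (condition (1)), identifying $B_{/b}\simeq B\times_C C_{/g(b)}$ as right fibrations over $B$ via unstraightening and fully-faithfulness, and then pulling back along $A\to B$ to get $A_{/b}\simeq A_{/g(b)}$; since a fibrewise equivalence of right fibrations pulls back to one, and such equivalences are in particular weak homotopy equivalences, the $n$-connectivity transfers. Both routes are valid and both conclude that $g$ is then left $n$-cofinal by part (1). Your slice identification is the more hands-on argument (it needs no Kan extensions, only the base-change compatibility of unstraightening already recorded in \Cref{unsteq} and used in \Cref{colim-repble}), while the paper's is shorter given that the Kan-extension formalism is already in play; your version has the mild advantage of exhibiting the concrete equivalence $A_{/b}\simeq A_{/g(b)}$, which is reusable elsewhere.
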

\begin{proof}
	\begin{enumerate}[label=(\arabic*)]
		\item This follows easily from (simplicial set version of) \Cref{$n$-cofinal-lim-pres} (3).
		\item Let $F\colon B\to\tau_{\leqslant n-1}\EuScript{S}$ be any diagram, we will show $\operatorname{lim}F\xrightarrow{\simeq}\operatorname{lim}f^*F$. As $g\circ f$ is left $n$-cofinal and $g$ is a full embedding of $\infty$-categories, we have
		\[
		\operatorname{lim}F\simeq\operatorname{lim}g_*F\simeq\operatorname{lim}(g\circ f)^*g_*F\simeq\operatorname{lim}f^*g^*g_*F\simeq\operatorname{lim}f^*F.\]
	\end{enumerate}
\end{proof}
\begin{prop}
	Given pullback squares in $\sset$ of the form
	\[
	\begin{tikzcd}
		A'' \ar[r, "u"]    \ar[d, "p''"']   \arrow[dr, phantom, "\pb", very near start]  & A' \ar[r] \ar[d, "p'"'] \arrow[dr, phantom, "\pb", very near start] & A \arrow[d, "p"]\\
		B''  \ar[r, "v"]    & B' \ar[r]      & B,
	\end{tikzcd}\]
	with $p\colon A\to B$ a cartesian fibration. If $v\colon B''\to B'$ is left $n$-cofinal, then so is $u\colon A''\to A'$.
	
	In particular, left $n$-cofinal maps of simplicial sets are stable under pullback along cartesian fibrations.
\end{prop}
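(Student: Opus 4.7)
The plan is to verify criterion~(8) of Theorem~\ref{$n$-cofinal-lim-pres} for $u$: I will show that for every diagram $F\colon A'^{\rm op}\to\tau_{\leqslant n-1}\EuScript{S}$, the comparison map
\[\operatorname{colim}(A''^{\rm op}\xrightarrow{u^{\rm op}}A'^{\rm op}\xrightarrow{F}\tau_{\leqslant n-1}\EuScript{S})\;\longrightarrow\;\operatorname{colim}(A'^{\rm op}\xrightarrow{F}\tau_{\leqslant n-1}\EuScript{S})\]
is an equivalence. If necessary, I reduce to the $\infty$-categorical case by functorial Joyal fibrant replacement, using the extension of $n$-cofinality to arbitrary maps of simplicial sets recorded in Remark~\ref{cofintopt-cst}.

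Since cartesian fibrations are stable under pullback, both $p'\colon A'\to B'$ and $p''\colon A''\to B''$ are cartesian fibrations, hence $p'^{\rm op}$ and $p''^{\rm op}$ are cocartesian fibrations fitting into a pullback square along $v^{\rm op}$. The crucial compatibility is that fibers match: for each $b''\in B''$, we have $(p'')^{-1}(b'')\cong (p')^{-1}(v(b''))$, and the composite of this fiber inclusion with $u$ recovers the fiber inclusion into $A'$. Now apply Theorem~\ref{decomp-diagram} to the cocartesian fibration $p'^{\rm op}$ and the functor $F$: since $\tau_{\leqslant n-1}\EuScript{S}$ is cocomplete, we obtain a canonical equivalence
\[\operatorname{colim}_{A'^{\rm op}}F\;\simeq\;\operatorname{colim}_{b'\in B'^{\rm op}}G(b'),\qquad G(b'):=\operatorname{colim}F\big|_{((p')^{-1}(b'))^{\rm op}}\]
(so $G=(p'^{\rm op})_!F\colon B'^{\rm op}\to\tau_{\leqslant n-1}\EuScript{S}$ is the left Kan extension). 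Running the same decomposition on $p''^{\rm op}$ with the functor $u^{\rm op,*}F$, and invoking the fiber identification, gives
\[\operatorname{colim}_{A''^{\rm op}}u^{\rm op,*}F\;\simeq\;\operatorname{colim}_{b''\in B''^{\rm op}}v^{\rm op,*}G(b'').\]

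Finally, since $v$ is left $n$-cofinal and $G$ takes values in $\tau_{\leqslant n-1}\EuScript{S}$, criterion~(8) of Theorem~\ref{$n$-cofinal-lim-pres} applied to $v$ yields $\operatorname{colim}_{B''^{\rm op}}v^{\rm op,*}G\xrightarrow{\simeq}\operatorname{colim}_{B'^{\rm op}}G$. Assembling the three equivalences completes the verification. The main obstacle is the naturality check: one must argue that the square
\[\begin{tikzcd}[column sep=large]
\operatorname{colim}_{A''^{\rm op}}u^{\rm op,*}F \ar[r] \ar[d, "\simeq"'] & \operatorname{colim}_{A'^{\rm op}}F \ar[d, "\simeq"] \\
\operatorname{colim}_{B''^{\rm op}}v^{\rm op,*}G \ar[r, "\simeq"'] & \operatorname{colim}_{B'^{\rm op}}G
\end{tikzcd}\]
commutes, with the vertical equivalences coming from Theorem~\ref{decomp-diagram} and the bottom from left $n$-cofinality of $v$; this should follow from the functoriality of the decomposition construction in Theorem~\ref{decomp-diagram} along the pullback square of cocartesian fibrations, but it does require care in unpacking the natural transformations involved (in particular, that the identification $(p'')_!u^{\rm op,*}F\simeq v^{\rm op,*}(p')_!F$ needed to pass from the decomposition to the bottom arrow is induced by the base-change map along the pullback square, which is well-behaved precisely because we are taking left Kan extension along cocartesian fibrations).
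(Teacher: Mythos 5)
Your proof is correct and is essentially the paper's own argument read in the opposite category: the paper verifies criterion (3) of Theorem \ref{$n$-cofinal-lim-pres}, decomposing $\operatorname{lim}F$ along the cartesian fibration $p'$ via the right Kan extension $p'_*F$ and the pointwise base-change identity $v^*p'_*F\simeq p''_*u^*F$ (valid because the fibre inclusions are cofinal in the relevant slices and $u$ identifies the fibres $p''^{-1}(d)\cong p'^{-1}(v(d))$), whereas you verify the dual criterion (8) with colimits and left Kan extensions along $p'^{\rm op}$. The naturality issue you flag is settled in exactly the same way in the paper, namely by the fibrewise identification of the Kan extensions, so no genuinely new input is needed.
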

\begin{proof}
	We will apply (simplicial set version of) \Cref{$n$-cofinal-lim-pres} (3). So let $F\colon A'\to\tau_{\leqslant n-1}\EuScript{S}$ be any diagram, we need to show
	\[\operatorname{lim}F\xrightarrow{\simeq}\operatorname{lim}u^*F.\]
	
	Since $p, p', p''$ are cartesian fibrations, we use again the byproduct of proof of \Cref{lim-decomp} to obtain that $p''^{-1}(d)\to A''_{d/}$ is left cofinal for every vertex $d\in B''$. Moreover, $u$ induces an isomorphism $p''^{-1}(d)\xrightarrow{\cong}p'^{-1}(v(d))$. As $v\colon B''\to B'$ is left $n$-cofinal, we have
	\[
	\begin{aligned}
		\operatorname{lim}F&\simeq\operatorname{lim}p'_*F\simeq\operatorname{lim}v^*p'_*F\simeq\operatorname{lim}_{d\in B''}(p'_*F)(v(d))\simeq\operatorname{lim}_{d\in B''}\operatorname{lim}(F|_{p'^{-1}(v(d))})\\
		&\simeq\operatorname{lim}_{d\in B''}\operatorname{lim}(F\circ u|_{p''^{-1}(d)})\simeq\operatorname{lim}_{d\in B''}p''_*(u^*F)(d)\simeq\operatorname{lim}u^*F,
	\end{aligned}\]
	as desired.
\end{proof}
\begin{remark}
	The proof indicates that it is sufficient for the weaker condition like $p^{-1}(b)\to A_{b/}$ is left $n$-cofinal to hold for every vertex $b\in B$ (rather than requiring $p$ to be a cartesian fibration). This should be another way to characterise all maps of simplicial sets sharing such pullback stability property (as $p$) in this proposition.
\end{remark}
In the same spirit, one can easily verify the following result (using that limits commute with limits).
\begin{prop}
	If $f\colon A\to B, f'\colon A'\to B'$ are left $n$-cofinal maps in $\sset$, then so is $f\times f'\colon A\times A'\to B\times B'$.
\end{prop}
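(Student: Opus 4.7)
The plan is to verify criterion (3) of \Cref{$n$-cofinal-lim-pres}, extended to arbitrary maps of simplicial sets via Joyal fibrant replacement as noted in \Cref{cofintopt-cst}. So, given an arbitrary diagram $F\colon B\times B'\to\tau_{\leqslant n-1}\EuScript{S}$, I will show that the canonical comparison $\operatorname{lim}F\to\operatorname{lim}(f\times f')^*F$ is an equivalence in $\tau_{\leqslant n-1}\EuScript{S}$.

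The key tool is Fubini for limits: since limits commute with limits, I can write
\[
\operatorname{lim}_{(b, b')\in B\times B'}F(b, b')\simeq\operatorname{lim}_{b'\in B'}\operatorname{lim}_{b\in B}F(b, b')
\]
and similarly with the order of variables swapped, and likewise over $A\times A'$. First, fixing $b'$ and applying left $n$-cofinality of $f$ to the restriction $F(-, b')\colon B\to\tau_{\leqslant n-1}\EuScript{S}$ would yield $\operatorname{lim}_{b\in B}F(b, b')\xrightarrow{\simeq}\operatorname{lim}_{a\in A}F(f(a), b')$, naturally in $b'$. Taking $\operatorname{lim}_{b'\in B'}$ of both sides and then transposing the order of limits, I obtain $\operatorname{lim}F\simeq\operatorname{lim}_{a\in A}\operatorname{lim}_{b'\in B'}F(f(a), b')$. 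Next, for each fixed $a$, applying left $n$-cofinality of $f'$ to $F(f(a), -)\colon B'\to\tau_{\leqslant n-1}\EuScript{S}$ replaces the inner limit by $\operatorname{lim}_{a'\in A'}F(f(a), f'(a'))$. Reassembling via Fubini once more produces $\operatorname{lim}_{(a, a')\in A\times A'}(f\times f')^*F$, as desired.

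The main obstacle I anticipate is making the parametrized applications of left $n$-cofinality precise: the partial evaluations $b'\mapsto F(-, b')$ and $a\mapsto F(f(a), -)$ need to be interpreted as genuine functors $B'\to\operatorname{Fun}(B, \tau_{\leqslant n-1}\EuScript{S})$ and $A\to\operatorname{Fun}(B', \tau_{\leqslant n-1}\EuScript{S})$ via the exponential adjunction, and the equivalences furnished by \Cref{$n$-cofinal-lim-pres} (3) at each parameter must glue into an equivalence natural in that parameter. This is formal once one observes that the comparison map in (3) is induced by the unit of $f^*\dashv f_*$, so it is natural in the diagram; the whole chain then assembles along the canonical comparison map $\operatorname{lim}F\to\operatorname{lim}(f\times f')^*F$, completing the argument.
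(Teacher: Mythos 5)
Your proposal is correct and matches the paper's intended argument: the paper gives no written proof but says the result is verified "in the same spirit" (i.e., via criterion (3) of \Cref{$n$-cofinal-lim-pres}) "using that limits commute with limits," which is exactly your Fubini argument applied one variable at a time. Your attention to the naturality of the comparison map in the parameter is the right way to make the sketch rigorous.
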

Here are two variants of the previous proposition, which in fact generalise it (by taking $A_0=B_0=\Delta^0$); we cannot give an as easy proof however.
\begin{prop}
	Given a commutative diagram
	\[\begin{tikzcd}[column sep=3em]
		A_1 \ar[r, "\alpha_1"] \ar[d, "v_1"']  &  A_0 \ar[d, "v_0"']  &  A_2  \ar[l, "\alpha_2"']  \arrow[d, "v_2"]\\
		B_1  \ar[r, "\beta_1"]   &  B_0    &  B_2  \ar[l, "\beta_2"'] 
	\end{tikzcd}\]
	in $\sset$, with horizontal arrows cocartesian fibration, and assume that $v_0$ is right $n$-cofinal, $-1\leqslant n\leqslant\infty$. If for each vertex $a_0\in A_0$, the restrictions $v_1\colon\alpha_1^{-1}(a_0)\to\beta_1^{-1}(v_0(a_0))$ and $v_2\colon\alpha_2^{-1}(a_0)\to\beta_2^{-1}(v_0(a_0))$ are right $n$-cofinal, then the induced map $v=v_1\times v_2\colon A_1\times_{A_0}A_2\to B_1\times_{B_0}B_2$ is right $n$-cofinal.
\end{prop}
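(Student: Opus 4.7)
The plan is to verify that $v$ is right $n$-cofinal by means of the right-cofinal analogue of \Cref{$n$-cofinal-lim-pres}(8) obtained by passing to opposites: it suffices to show that for every diagram $F\colon B_1\times_{B_0}B_2\to\tau_{\leqslant n-1}\EuScript{S}$, the canonical comparison $\operatorname{colim}(F\circ v)\to\operatorname{colim}F$ is an equivalence. Since $\tau_{\leqslant n-1}\EuScript{S}$ is bicomplete, both colimits automatically exist.

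First I would observe that $p\colon A_1\times_{A_0}A_2\to A_0$ and $q\colon B_1\times_{B_0}B_2\to B_0$ are cocartesian fibrations by stability of cocartesian fibrations under pullback and composition applied to $\alpha_1,\alpha_2,\beta_1,\beta_2$; their fibres are $\alpha_1^{-1}(a_0)\times\alpha_2^{-1}(a_0)$ and $\beta_1^{-1}(b_0)\times\beta_2^{-1}(b_0)$ respectively, and the restriction of $v$ to the fibre over $a_0\in A_0$ is the product $v_1\times v_2$ of the restrictions given in the hypothesis. Applying \Cref{lim-decomp} to $q$ and $p$ rewrites
\[\operatorname{colim}F\simeq\operatorname{colim}_{b_0\in B_0}\operatorname{colim}\bigl(F|_{q^{-1}(b_0)}\bigr),\qquad\operatorname{colim}(F\circ v)\simeq\operatorname{colim}_{a_0\in A_0}\operatorname{colim}\bigl((F\circ v)|_{p^{-1}(a_0)}\bigr).\]
By the previous proposition, applied after passing to opposites, the product of two right $n$-cofinal maps is again right $n$-cofinal, so for each $a_0\in A_0$ the fibrewise equivalence $\operatorname{colim}((F\circ v)|_{p^{-1}(a_0)})\simeq\operatorname{colim}(F|_{q^{-1}(v_0(a_0))})$ holds. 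Packaging the fibrewise colimits into a functor $G\simeq q_!F\colon B_0\to\tau_{\leqslant n-1}\EuScript{S}$, $b_0\mapsto\operatorname{colim}(F|_{q^{-1}(b_0)})$, the two outer colimits become $\operatorname{colim}G$ and $\operatorname{colim}(G\circ v_0)$, and the assumed right $n$-cofinality of $v_0$ supplies the final equivalence $\operatorname{colim}(G\circ v_0)\simeq\operatorname{colim}G$.

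The main difficulty will be the naturality bookkeeping: promoting the pointwise identifications $\operatorname{colim}((F\circ v)|_{p^{-1}(a_0)})\simeq\operatorname{colim}(F|_{q^{-1}(v_0(a_0))})$ into a genuine natural equivalence $p_!(v^*F)\simeq v_0^*(q_!F)$ in $\operatorname{Fun}(A_0,\tau_{\leqslant n-1}\EuScript{S})$, and verifying that the composed equivalence really is the canonical comparison map. The cleanest route, echoing the proof of \Cref{lim-decomp}, is to perform the whole argument at the level of left Kan extensions, invoking compatibility of unstraightening with base change from \Cref{unsteq} together with the byproduct of the proof of \Cref{lim-decomp} that the inclusion of the fibre $p^{-1}(a_0)\hookrightarrow(A_1\times_{A_0}A_2)_{/a_0}$ is right cofinal (and similarly for $q$), so that both sides of the decomposition indeed compute the correct fibrewise colimit before one glues with the outer cofinality of $v_0$.
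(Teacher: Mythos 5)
Your proposal is correct and follows essentially the same route as the paper: both decompose the colimit over the cocartesian fibrations $A_1\times_{A_0}A_2\to A_0$ and $B_1\times_{B_0}B_2\to B_0$ via \Cref{lim-decomp}, identify the fibrewise colimits using that the fibrewise restriction of $v$ is a product of right $n$-cofinal maps, and then reindex the outer colimit along the right $n$-cofinal $v_0$. The only (immaterial) differences are that you test against $\tau_{\leqslant n-1}\EuScript{S}$ via condition (8) where the paper tests against $n$-topoi via condition (7), and that you make explicit the naturality bookkeeping $\alpha_!(v^*F)\simeq v_0^*(\beta_!F)$ that the paper leaves implicit.
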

\begin{proof}
	By assumption, the projections $\alpha=\alpha_1\times\alpha_2\colon A_1\times_{A_0}A_2\to A_0$ and $\beta=\beta_1\times\beta_2\colon B_1\times_{B_0}B_2\to B_0$ are cocartesian fibration, and the restriction $v\colon\alpha^{-1}(a_0)\to\beta^{-1}(v_0(a_0))$ is right $n$-cofinal for each vertex $a_0\in A_0$. Thus for every $n$-topos $\EuScript{X}$ and every diagram $F\colon B_1\times_{B_0}B_2\to\EuScript{X}$, by \Cref{lim-decomp} we have
	\[
	\begin{aligned}
		\operatorname{colim}F&\simeq\operatorname{colim}_{b_0\in B_0}\operatorname{colim}F|_{\beta^{-1}(b_0)}\simeq\operatorname{colim}_{a_0\in A_0}\operatorname{colim}F|_{\beta^{-1}(v_0(a_0))}\\
		&\simeq\operatorname{colim}_{a_0\in A_0}\operatorname{colim}F\circ v|_{\alpha^{-1}(a_0)}\simeq	\operatorname{colim}v^*F.
	\end{aligned}\]
\end{proof}
\begin{prop}\label{lrcofinal-cocart}
Given a commutative diagram
\[\begin{tikzcd}[column sep=3em]
	A_1 \ar[r, "\alpha_1"] \ar[d, "v_1"']  &  A_0 \ar[d, "v_0"']  &  A_2  \ar[l, "\alpha_2"']  \arrow[d, "v_2"]\\
	B_1  \ar[r, "\beta_1"]   &  B_0    &  B_2  \ar[l, "\beta_2"'] 
\end{tikzcd}\]
in $\sset$, with horizontal arrows cocartesian fibration, and assume that $v_0$ is right $n$-cofinal, $-1\leqslant n\leqslant\infty$. If for each vertex $a_0\in A_0$, the restrictions $v_1\colon\alpha_1^{-1}(a_0)\to\beta_1^{-1}(v_0(a_0))$ and $v_2\colon\alpha_2^{-1}(a_0)\to\beta_2^{-1}(v_0(a_0))$ are left $n$-cofinal, then the induced map $v=v_1\times v_2\colon A_1\times_{A_0}A_2\to B_1\times_{B_0}B_2$ is left $n$-cofinal.
\end{prop}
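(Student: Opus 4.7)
The plan is to follow the same template as the preceding proposition but using limits instead of colimits, and to invoke the characterisation of left $n$-cofinality via limit preservation from \Cref{$n$-cofinal-lim-pres}. By that characterisation (applied, e.g., to $\EuScript{E}=\tau_{\leqslant n-1}\EuScript{S}$), it suffices to show that for every complete $(n,1)$-category $\EuScript{E}$ and every functor $F\colon B_1\times_{B_0}B_2\to\EuScript{E}$, the comparison map $\operatorname{lim}F\to\operatorname{lim}v^*F$ is an equivalence.

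As in the preceding proof, the projections $\alpha\colon A_1\times_{A_0}A_2\to A_0$ and $\beta\colon B_1\times_{B_0}B_2\to B_0$ inherit cocartesian-fibration structures (being fibre products of cocartesian fibrations over the same base), with fibres $\alpha_1^{-1}(a_0)\times\alpha_2^{-1}(a_0)$ and $\beta_1^{-1}(b_0)\times\beta_2^{-1}(b_0)$. Applying \Cref{lim-decomp}(2) to these cocartesian fibrations yields
\[
\operatorname{lim}F\simeq\operatorname{lim}_{b_0\in B_0^{\rm op}}\operatorname{lim}F|_{\beta^{-1}(b_0)}, \qquad \operatorname{lim}v^*F\simeq\operatorname{lim}_{a_0\in A_0^{\rm op}}\operatorname{lim}(v^*F)|_{\alpha^{-1}(a_0)}.
\]
The fibrewise assumption that each $v|_{\alpha^{-1}(a_0)}\colon\alpha^{-1}(a_0)\to\beta^{-1}(v_0(a_0))$ is left $n$-cofinal identifies the two inner limits, presenting both sides as limits over $B_0^{\rm op}$ and $A_0^{\rm op}$ respectively of the same assembled functor $G\colon B_0\to\EuScript{E}$ with $G(b_0):=\operatorname{lim}F|_{\beta^{-1}(b_0)}$. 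Since $v_0$ is right $n$-cofinal, $v_0^{\rm op}\colon A_0^{\rm op}\to B_0^{\rm op}$ is left $n$-cofinal, and a further application of \Cref{$n$-cofinal-lim-pres} gives $\operatorname{lim}_{B_0^{\rm op}}G\xrightarrow{\simeq}\operatorname{lim}_{A_0^{\rm op}}(v_0^{\rm op})^*G$. Composing these equivalences yields the claim.

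The main obstacle, and what keeps this proof from being a purely formal dualisation of the preceding one, is adapting \Cref{lim-decomp}(2) to functors defined on the total space of the cocartesian fibration rather than on its $\operatorname{Cat}_\infty$-colimit. In the colimit case the relevant localisation $B_1\times_{B_0}B_2\to\EuScript{C}$ is right cofinal, so the decomposition extends to total-space functors for free; for limits one would want the dual assertion, which fails in general for cocartesian fibrations. In practice one therefore either restricts to the universal case of $\tau_{\leqslant n-1}\EuScript{S}$-valued diagrams and unfolds the pointwise right Kan extension along $\beta$ directly (using $(n-1)$-truncatedness of the target to control the difference between fibres and under-slices), or else invokes condition (9) of \Cref{$n$-cofinal-lim-pres} to reduce the claim to the $n$-connectivity of the iterated slice $(A_1\times_{A_0}A_2)_{/d}$ and verifies that connectivity fibrewise.
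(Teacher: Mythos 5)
Your first two paragraphs reproduce the paper's own argument essentially verbatim, and the worry you raise in your third paragraph is not a presentational wrinkle but a genuine, fatal gap --- one that the paper's own proof shares. The step $\operatorname{lim}F\simeq\operatorname{lim}_{b_0\in B_0^{\rm op}}\operatorname{lim}F|_{\beta^{-1}(b_0)}$ has no justification: \Cref{lim-decomp} (2) computes limits of functors defined on (or factoring through) the localisation of the total space at the cocartesian edges, whereas $F$ here is an arbitrary functor on the total space $\EuScript{E}=B_1\times_{B_0}B_2$. For arbitrary $F$ one has $\operatorname{lim}F\simeq\operatorname{lim}_{B_0}\beta_*F$ with $(\beta_*F)(b_0)=\operatorname{lim}(F|_{\EuScript{E}\times_{B_0}(B_0)_{b_0/}})$, and for a cocartesian fibration the fibre $\beta^{-1}(b_0)$ is final in $\EuScript{E}\times_{B_0}(B_0)_{/b_0}$ (which is why the colimit version in the preceding proposition is fine) but is \emph{not} initial in $\EuScript{E}\times_{B_0}(B_0)_{b_0/}$; indeed the fibrewise limits do not even assemble into the functor $G$ on $B_0^{\rm op}$ that you posit. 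Neither of your two proposed repairs can succeed, because the statement is false as written.

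Concretely: take $A_1=A_0=A_2=\Delta^0$, $B_1=B_0=B_2=\Delta^1$, all $\alpha_i,\beta_i$ identity maps, and all $v_i$ equal to the inclusion $\{1\}\hookrightarrow\Delta^1$. The horizontal arrows are cocartesian fibrations, $v_0$ is right anodyne and hence right $n$-cofinal for every $n$, and each fibrewise restriction is an isomorphism $\Delta^0\to\Delta^0$, hence left $n$-cofinal. But $v=\{1\}\hookrightarrow\Delta^1$ is not left $n$-cofinal for any $n\geqslant 0$: the slice $\{1\}\times_{\Delta^1}(\Delta^1)_{/0}$ is empty; equivalently, for $F\colon\Delta^1\to\set$ with $F(0)=\varnothing$ and $F(1)=*$ one has $\operatorname{lim}F=\varnothing$ while $\operatorname{lim}v^*F=*$, so $v$ fails condition (3) of \Cref{$n$-cofinal-lim-pres}. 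The statement that is actually true in this vicinity is the honest dual of the preceding proposition, obtained by passing to opposite simplicial sets throughout: if the horizontal arrows are \emph{cartesian} fibrations, $v_0$ is \emph{left} $n$-cofinal, and the fibrewise restrictions are left $n$-cofinal, then $v$ is left $n$-cofinal. In that setting limits genuinely decompose fibrewise and the computation you outline goes through; as stated, the mixed hypothesis (cocartesian fibrations, $v_0$ right $n$-cofinal, fibrewise left $n$-cofinal) cannot be salvaged.
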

\begin{proof}
	By assumption, the projections $\alpha=\alpha_1\times\alpha_2\colon A_1\times_{A_0}A_2\to A_0$ and $\beta=\beta_1\times\beta_2\colon B_1\times_{B_0}B_2\to B_0$ are cocartesian fibration, and the restriction $v\colon\alpha^{-1}(a_0)\to\beta^{-1}(v_0(a_0))$ is left $n$-cofinal for each vertex $a_0\in A_0^{\rm op}$. Thus for every diagram $F\colon B_1\times_{B_0}B_2\to\tau_{\leqslant n-1}\EuScript{S}$, we have
	\[
	\begin{aligned}
		\operatorname{lim}F&\simeq\operatorname{lim}_{b_0\in B_0^{\rm op}}\operatorname{lim}F|_{\beta^{-1}(b_0)}\simeq\operatorname{lim}_{a_0\in A_0^{\rm op}}\operatorname{lim}F|_{\beta^{-1}(v_0(a_0))}\\
		&\simeq\operatorname{lim}_{a_0\in A_0^{\rm op}}\operatorname{lim}F\circ v|_{\alpha^{-1}(a_0)}\simeq	\operatorname{lim}v^*F.
	\end{aligned}\]
\end{proof}
The following result generalises \cite[Lemma 6.5.3.10]{HTT}.
\begin{prop}\label{cofinal-ff-colim}
	Let $\EuScript{C, D}$ be small $\infty$-categories, let $p\colon\EuScript{C}\to\EuScript{D}$ be a functor. Let $-1\leqslant n\leqslant\infty$.
	\begin{enumerate}[label=\myparen{\arabic*}]
		\item Assume that $\EuScript{E}$ is a cocomplete $(n, 1)$-category. Let $\alpha\colon U\to V$ be an edge in $\operatorname{Fun}(\EuScript{D}, \EuScript{E})$. If $p\colon\EuScript{C}\to\EuScript{D}$ is right $n$-cofinal, then the induced morphism $\operatorname{colim}p^*\alpha\colon\operatorname{colim}p^*U\to\operatorname{colim}p^*V$ is an equivalence \myparen{resp. \emph{$n$-connective}} if and only if $\operatorname{colim}\alpha\colon\operatorname{colim}U\to\operatorname{colim}V$ is an equivalence \myparen{resp. \emph{$n$-connective}}.
		\item Assume that the functor $p\colon\EuScript{C}\to\EuScript{D}$ is fully faithful. If for every $n$-topos $\EuScript{X}$ and every edge $\alpha\colon U\to V$ in $\operatorname{Fun}(\EuScript{D}, \EuScript{X})$,
		\[p^*\alpha\colon p^*U\xrightarrow{\simeq} p^*V\Longrightarrow\operatorname{colim}\alpha\colon\operatorname{colim}U\xrightarrow{\simeq}\operatorname{colim}V,\]
		then $p\colon\EuScript{C}\to\EuScript{D}$ is right $n$-cofinal.
	\end{enumerate}
\end{prop}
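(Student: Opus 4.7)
The plan is to deduce both parts from \Cref{$n$-cofinal-lim-pres} applied to $p^{\mathrm{op}}$, which yields its right-handed dual: $p$ is right $n$-cofinal if and only if $p$ respects colimits in every $(n,1)$-category, equivalently, if and only if $p$ respects colimits in every $n$-topos.

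For part (1), this dual form immediately furnishes a natural equivalence $\operatorname{colim} p^* W \xrightarrow{\simeq} \operatorname{colim} W$ for every $W \in \operatorname{Fun}(\EuScript{D}, \EuScript{E})$, since $\EuScript{E}$ is a cocomplete $(n,1)$-category. Applied naturally in $W$ to the edge $\alpha\colon U \to V$, one obtains a commutative square in $\EuScript{E}$ whose horizontal arrows are these two equivalences and whose vertical arrows are $\operatorname{colim} p^* \alpha$ and $\operatorname{colim} \alpha$. Hence one vertical arrow is an equivalence (resp.\ $n$-connective) if and only if the other is.

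For part (2), the plan is to verify that $p$ respects colimits in every $n$-topos, and then to invoke the dual characterisation. Fix an $n$-topos $\EuScript{X}$ and a functor $F\colon \EuScript{D} \to \EuScript{X}$; since $\EuScript{X}$ is cocomplete the left Kan extension $p_! p^* F$ exists, and we have the counit $\alpha := \epsilon_F\colon p_! p^* F \to F$ of the adjunction $p_! \dashv p^*$. Because $p$ is fully faithful, the unit $\operatorname{id} \to p^* p_!$ is a natural equivalence, so by a triangle identity $p^* \alpha$ is an equivalence in $\operatorname{Fun}(\EuScript{C}, \EuScript{X})$. The hypothesis of (2) then forces $\operatorname{colim} \alpha$ to be an equivalence; combined with the standard identity $\operatorname{colim}_{\EuScript{D}} \circ p_! \simeq \operatorname{colim}_{\EuScript{C}}$, this yields $\operatorname{colim} p^* F \xrightarrow{\simeq} \operatorname{colim} F$, as desired.

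No serious obstacle arises: the characterisation theorem does essentially all the work, and the only point requiring care is selecting the correct right/left dualisation of \Cref{$n$-cofinal-lim-pres} (i.e., remembering to pass through $p^{\mathrm{op}}$) at each stage.
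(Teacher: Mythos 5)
Your argument is correct and follows essentially the same route as the paper: part (1) via the naturality square comparing $\operatorname{colim}p^*\alpha$ with $\operatorname{colim}\alpha$ whose other two sides are equivalences by the dual of \Cref{$n$-cofinal-lim-pres}, and part (2) by testing the hypothesis on the counit $p_!p^*F\to F$, using full faithfulness to see that its $p^*$-restriction is an equivalence and $\operatorname{colim}_{\EuScript{D}}\circ\, p_!\simeq\operatorname{colim}_{\EuScript{C}}$ to conclude, then invoking the characterisation of right $n$-cofinality via $n$-topoi.
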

\begin{proof}
	\begin{enumerate}[label=(\arabic*)]
		\item Consider the following commutative diagram:
		\[\begin{tikzcd}[column sep=4em]
			\operatorname{colim}p^*U \ar[r, "\operatorname{colim}p^*\alpha"] \ar[d]  &  \operatorname{colim}p^*V  \arrow[d]\\
			\operatorname{colim}U \ar[r, "\operatorname{colim}\alpha"]      &  \operatorname{colim}V.
		\end{tikzcd}\]
		As $p\colon\EuScript{C}\to\EuScript{D}$ is right $n$-cofinal and $\EuScript{E}$ is an $(n, 1)$-category, the two vertical arrows are equivalences by \Cref{$n$-cofinal-lim-pres} (4). The result follows.
		\item For a functor $V\colon\EuScript{D}\to\EuScript{X}$, we let $U:=p_!p^*V\colon\EuScript{D}^{\rm op}\to\EuScript{X}$ and let $\alpha\colon U\to V$ be the counit. Then $p^*U=p^*p_!p^*V\simeq p^*V$ (left Kan extension of a functor along a fully faithful functor restricts back to the original functor), so $\operatorname{colim}p^*U\simeq\operatorname{colim}p^*V\simeq\operatorname{colim}U$. And the assumption implies that the bottom arrow $\operatorname{colim}\alpha$ is an equivalence. Thus the right vertical comparison map $\operatorname{colim}p^*V\simeq\operatorname{colim}V$ is also an equivalence. This holds for every $n$-topos $\EuScript{X}$ and every functor $V\colon\EuScript{D}\to\EuScript{X}$, the result now follows from \Cref{$n$-cofinal-lim-pres} (7).
	\end{enumerate}
\end{proof}
\begin{corollary}\label{simp-ncofinal}
	For every $n\in\mathbb{N}$, the inclusion functor $(\mathbf{\Delta}^{\leqslant n})^{\rm op}\hookrightarrow\mathbf{\Delta}^{\rm op}$ is right $n$-cofinal.
\end{corollary}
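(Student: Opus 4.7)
The first step is to unravel \Cref{$n$-cofinality}: right $n$-cofinality of $(\mathbf{\Delta}^{\leqslant n})^{\rm op}\hookrightarrow \mathbf{\Delta}^{\rm op}$ requires, for each $[m]\in \mathbf{\Delta}$, that the slice $((\mathbf{\Delta}^{\leqslant n})^{\rm op})_{[m]/}$ be $n$-connective. Direct inspection identifies this slice with $((\mathbf{\Delta}^{\leqslant n})_{/[m]})^{\rm op}$, and since homotopy types are invariant under $(-)^{\rm op}$ (\Cref{spcat-adj}), it suffices to prove that the $1$-category $(\mathbf{\Delta}^{\leqslant n})_{/[m]}$ is $n$-connective for every $m\geqslant 0$. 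When $m\leqslant n$ this category has a terminal object $\operatorname{id}_{[m]}$, hence is weakly contractible; only the case $m > n$ needs work.

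For $m > n$, I would invoke the observation from diagram \eqref{cof-simpidx} that the inclusion $(\mathbf{\Delta}_{\rm s}^{\leqslant n})_{/[m]}\hookrightarrow (\mathbf{\Delta}^{\leqslant n})_{/[m]}$ is right cofinal, so the two share the same homotopy type, and I may replace the latter by the semisimplex variant. Now $(\mathbf{\Delta}_{\rm s}^{\leqslant n})_{/[m]}$ is nothing but the poset of non-empty subsets of $\{0, \ldots, m\}$ of cardinality at most $n + 1$, ordered by inclusion. Its nerve is the order complex of the face poset of the simplicial complex $\operatorname{sk}_n\Delta^m$, which is precisely its barycentric subdivision; in particular the geometric realisation is canonically $|\operatorname{sk}_n\Delta^m|$.

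It then remains to verify that $|\operatorname{sk}_n\Delta^m|$ is $n$-connective, i.e.\ $\tau_{\leqslant n-1}|\operatorname{sk}_n\Delta^m|\simeq *$. The inclusion $\operatorname{sk}_n\Delta^m\hookrightarrow \Delta^m$ agrees on simplices of dimension $\leqslant n$, so by cellular approximation it induces isomorphisms on $\pi_i$ for $i < n$; contractibility of $|\Delta^m|$ forces those groups to vanish, and $|\operatorname{sk}_n\Delta^m|$ is visibly non-empty for $m\geqslant 0$, giving exactly $n$-connectivity in the paper's convention.

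The main obstacle is making the simplicial-complex identification rigorous within the $\infty$-categorical language of the paper; a cleaner alternative would be to apply \Cref{htyp-sset} directly to $\operatorname{sk}_n\Delta^m$ (whose nondegenerate simplices live in dimension $\leqslant n$) and compare $(\mathbf{\Delta}^{\leqslant n})_{/[m]}$ with $\mathbf{\Delta}_{/\operatorname{sk}_n\Delta^m}$ via cofinality, thereby bypassing the subdivision machinery entirely.
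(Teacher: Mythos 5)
Your proof is correct, but it follows a genuinely different route from the paper. The paper disposes of this corollary in one line: it applies \Cref{cofinal-ff-colim}~(2) (the colimit-reflection criterion for fully faithful functors, itself resting on the characterisation of right $n$-cofinality via preservation of colimits in $n$-topoi from \Cref{$n$-cofinal-lim-pres}~(7)) together with \cite[Lemma 6.5.3.10]{HTT}, which asserts that a map of simplicial objects in an $\infty$-topos inducing equivalences in degrees $\leqslant n$ induces a suitably connective, hence in an $n$-topos invertible, map on geometric realisations. You instead verify the definition of right $n$-cofinality directly: you correctly identify the slice $((\mathbf{\Delta}^{\leqslant n})^{\rm op})_{[m]/}$ with $((\mathbf{\Delta}^{\leqslant n})_{/[m]})^{\rm op}$, dispose of $m\leqslant n$ via the terminal object, pass to $(\mathbf{\Delta}_{\rm s}^{\leqslant n})_{/[m]}$ using the right-cofinality of the leftmost arrow in \eqref{cof-simpidx} (exactly the reduction the paper itself uses later in \Cref{lcof-simp-inj-fin}), recognise that poset as the barycentric subdivision of $\operatorname{sk}_n\Delta^m$, and conclude from the classical $(n-1)$-connectedness of skeleta, which matches the paper's convention that $n$-connective means $\tau_{\leqslant n-1}|K|\simeq *$. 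All steps are sound and none depends circularly on the corollary. What the paper's argument buys is brevity and uniformity with its general machinery, at the cost of importing a nontrivial external lemma; what yours buys is an explicit, self-contained computation of the actual homotopy type of every slice (a wedge of $n$-spheres when $m>n$), which in effect re-derives the content of the cited HTT lemma in this special case. Your closing worry about rigour is unfounded: the identification of the nerve of a face poset with the barycentric subdivision, and the weak equivalence $|\operatorname{sd}X|\simeq|X|$, are compatible with the paper's notion of $|{-}|$, so the subdivision step needs no further justification; the sketched alternative via $\mathbf{\Delta}_{/\operatorname{sk}_n\Delta^m}$ would by contrast require an extra cofinality argument that you do not supply, so the subdivision route is the one to keep.
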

\begin{proof}
	Immediately from \Cref{cofinal-ff-colim} (2) and \cite[Lemma 6.5.3.10]{HTT}.
\end{proof}
\begin{example}\label{lcof-simp-inj-fin}
\emph{For $0\leqslant n\leqslant m\leqslant\infty$, the inclusion functor $\mathbf{\Delta}^{\leqslant n}\hookrightarrow\mathbf{\Delta}^{\leqslant m}$ is left $n$-cofinal, as are $\mathbf{\Delta}_{\rm s}^{\leqslant n}\hookrightarrow\mathbf{\Delta}_{\rm s}^{\leqslant m}$ and $\mathbf{\Delta}_{\rm s}^{\leqslant n}\hookrightarrow\mathbf{\Delta}^{\leqslant n}$.}

The first statement is confirmed by \Cref{simp-ncofinal} (which by \Cref{ncof-comp} (2), can assume $m=\infty$). The second follows from the first and that the leftmost vertical arrow in the commutative diagram (\ref{cof-simpidx}) is right cofinal hence a weak homotopy equivalence (so $(\mathbf{\Delta}_{\rm s}^{\leqslant n})_{/[m]}$ and $(\mathbf{\Delta}^{\leqslant n})_{/[m]}$ share the same connectivity). For the last one, we need to use \Cref{$n$-cofinal-lim-pres} (3). For any functor $F\colon\mathbf{\Delta}^{\leqslant n}\to\tau_{\leqslant n-1}\EuScript{S}$, we denote by $G\colon\mathbf{\Delta}\to\tau_{\leqslant n-1}\EuScript{S}$ a right Kan extension of $F$ along the fully faithful inclusion $\mathbf{\Delta}^{\leqslant n}\hookrightarrow\mathbf{\Delta}$. Then we have
\[\operatorname{lim}F\simeq\operatorname{lim}G\simeq\operatorname{lim}_{\mathbf{\Delta}_{\rm s}}G|_{\mathbf{\Delta}_{\rm s}}\simeq\operatorname{lim}_{\mathbf{\Delta}_{\rm s}^{\leqslant n}}G|_{\mathbf{\Delta}_{\rm s}^{\leqslant n}}=\operatorname{lim}(\mathbf{\Delta}_{\rm s}^{\leqslant n}\hookrightarrow\mathbf{\Delta}^{\leqslant n}\xrightarrow{F}\tau_{\leqslant n-1}\EuScript{S}),\]
as desired (we have used that $\mathbf{\Delta}_{\rm s}\hookrightarrow\mathbf{\Delta}$ is left cofinal and $\mathbf{\Delta}_{\rm s}^{\leqslant n}\hookrightarrow\mathbf{\Delta}_{\rm s}$ is left $n$-cofinal for the last two equivalences); we can also directly see this from \Cref{ncof-comp} (2).
\end{example}
\begin{remark}\label{lim-trunc-des}
	This, together with \Cref{colimits-simplicial}, explains why Čech descent for sheaves of spaces needs to take limits of the whole $\mathbf{\Delta}_{\rm s}$-shaped diagrams, while for sheaves of sets, we only need to take equalisers (limits of $\mathbf{\Delta}_{\rm s}^{\leqslant 1}$-shaped diagrams). Similarly, the usual stack condition involves limits of $\mathbf{\Delta}_{\rm s}^{\leqslant 2}$-shaped diagrams (being valued in the $(2, 1)$-category of groupoids).
	
	However, \Cref{lcof-simp-inj-fin} tells that we can safely use the $(\infty, 1)$-categorical formulation, only that if the target $\infty$-category is an $(n, 1)$-category, we can discard the terms above level $n$ without changing the limits.
\end{remark}
\begin{remark}
\Cref{$n$-cofinal-lim-pres} (4) in particular tells that if a functor $p\colon\EuScript{C}\to\EuScript{D}$ is right $n$-cofinal and an $(n, 1)$-category $\EuScript{E}$ admits all $\EuScript{C}$-shaped colimits, it also admits all $\EuScript{D}$-shaped colimits (similarly, if a functor preserves $\EuScript{C}$-shaped colimits, then it also preserves $\EuScript{D}$-shaped colimits). However, the converse cannot hold. We provide a counterexample in the case $n=1$ with $p$ the inclusion functor $(\mathbf{\Delta}_{\rm s}^{\leqslant 1})^{\rm op}\hookrightarrow(\mathbf{\Delta}^{\leqslant 1})^{\rm op}$ here. We take $\EuScript{E}$ to be the category
\newcommand{\stackspace}{3.2}
\newcommand{\stack}[2][1cm]{\;\tikz[baseline, yshift=.65ex]%
	{\foreach \k [evaluate=\k as \r using (.5*#2+.5-\k)*\stackspace] in {1,...,#2}{%
			\ifodd\k{\draw[->](0,\r pt)--(#1,\r pt);}%
			\else{\draw[<-, densely dashed, blue](0,\r pt)--(#1,\r pt);}\fi
	}}\;}
\[
\begin{tikzcd}
	e	&	d	\ar[l, "q"']	& a \arrow[l, shift left, "v"]\arrow[l, shift right, "u"']	\stack{3}b\stack{1}c,
\end{tikzcd}\]
where the part from $a$ to the right is a reflexive coequaliser diagram, while $q\circ u\ne q\circ v$. Then $\EuScript{E}$ admits reflexive coequalisers, but has no coequaliser of $u, v$ --- no morphism in $\EuScript{E}$ could coequalise them. One can make this construction even more explicit by considering sets and maps.
\end{remark}

\section{$n$-siftedness}

In this section, we investigate the notion of $n$-siftedness, the case $n=1$ is treated in \cite{ARV1sift}. It is designed in a way that $n$-siftedness interpolates between the usual notion of siftedness for ordinary categories and siftedness for $\infty$-categories by Lurie.

\begin{definition}[$n$-siftedness]\label{$n$-siftedness}
	Let $K$ be a simplicial set, let $-2\leqslant n\leqslant\infty$. We say that $K$ is \emph{$n$-sifted} if $K\ne\varnothing$ and its diagonal $\delta\colon K\to K\times K$ is right $n$-cofinal; equivalently, the diagonal map $\delta\colon K\to K^I$ is right $n$-cofinal for every finite set $I$. We say that $K$ is \emph{$n$-cosifted} if $K^{\rm op}$ is $n$-sifted, i.e. $K\ne\varnothing$ and its diagonal $\delta\colon K\to K\times K$ is left $n$-cofinal.
\end{definition}
\begin{remark}
\begin{enumerate}[label=(\arabic*)]
	\item If two simplicial sets $K, L$ are $n$-sifted, then so is $K\times L$.
	\item 
	An $\infty$-category $\EuScript{C}$ is $n$-sifted if and only if it is non-empty and, for every pair of objects $a, b\in\EuScript{C}$, the $\infty$-category $\EuScript{C}_{a/}\times_{\EuScript{C}}\EuScript{C}_{b/}$ is $n$-connective. This is clear since $\EuScript{C}\times_{\EuScript{C}\times\EuScript{C}}(\EuScript{C}\times\EuScript{C})_{(a, b)/}\cong\EuScript{C}_{a/}\times_{\EuScript{C}}\EuScript{C}_{b/}$ in $\sset$.
	
	In the same spirit, we find that an $\infty$-category $\EuScript{C}$ is $n$-sifted if and only if it is non-empty and, for every $m$-tuple of objects $a_1, \cdots, a_m\in\EuScript{C}$ ($m\geqslant 0$), the ($1$-categorical) limit simplicial set of the partial $m$-cube in $\sset$ whose last $m$ edges are $\EuScript{C}_{a_i/}\to\EuScript{C}$, $i=1, \cdots, m$, is $n$-connective.
	\item 
	If an $\infty$-category $\EuScript{C}$ is non-empty and has coproduct of any pair of objects, it is ($\infty$-)sifted. In fact, in this case, the diagonal $\EuScript{C}\to\EuScript{C}\times\EuScript{C}$ is a right adjoint.
	\item If a simplicial set $K$ is $n$-sifted, then both projections $K\times K\rightrightarrows K$ are right $n$-cofinal. The assertion in the above definition for $I=\varnothing$ follows from this (with a Joyal fibrant model $K\to\EuScript{C}$ taken: by \cite[Corollary 2.11 (2)]{COJ}, for any object $c\in\EuScript{C}$, $|\EuScript{C}|\simeq|\EuScript{C}_{c/}\times\EuScript{C}|$ is $n$-connective), while for $I$ non-empty this can be seen by induction.
\end{enumerate}
\end{remark}
\begin{prop}\label{nsift-cof-emb}
Let $f\colon A\to B$ be a left $n$-cofinal map in $\sset$. If $A$ is $n$-cosifted, then so is $B$.

The converse holds if $f$ is also a full embedding of $\infty$-categories.
\end{prop}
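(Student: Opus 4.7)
The plan is to analyze everything through the naturality square
\[
\begin{tikzcd}
A \ar[r, "\delta_A"] \ar[d, "f"'] & A \times A \ar[d, "f \times f"] \\
B \ar[r, "\delta_B"] & B \times B,
\end{tikzcd}
\]
using two tools already developed in the paper: the stability of left $n$-cofinality under binary products, and the two-out-of-three type assertions in \Cref{ncof-comp}. Both halves of the proposition reduce to chasing left $n$-cofinality around this square.

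For the first assertion, I would start from the hypothesis that $\delta_A$ is left $n$-cofinal. Product stability applied to $f$ gives that $f \times f$ is left $n$-cofinal, hence by \Cref{ncof-comp}(1) so is the composition $(f \times f) \circ \delta_A$. Since this composition equals $\delta_B \circ f$, a second application of \Cref{ncof-comp}(1), this time with the left $n$-cofinal map $f$ in the initial slot, forces $\delta_B$ itself to be left $n$-cofinal. Non-emptiness of $B$ is automatic from the existence of a map out of the non-empty $A$.

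For the converse, assume that $f$ is a full embedding of $\infty$-categories and that $B$ is $n$-cosifted. The new ingredient is that $f \times f$ is then also a full embedding of $\infty$-categories, which is immediate from the product decomposition of mapping spaces. Starting from left $n$-cofinality of $\delta_B$ and $f$, \Cref{ncof-comp}(1) yields left $n$-cofinality of $\delta_B \circ f = (f \times f) \circ \delta_A$. Applying \Cref{ncof-comp}(2) to the full embedding $f \times f$ together with this composite then delivers left $n$-cofinality of $\delta_A$. Non-emptiness of $A$ follows from the standing remark that a left $n$-cofinal map with non-empty target has non-empty source.

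I do not foresee a substantive obstacle; the argument is an extraction from the categorical toolkit already assembled in \S 6. The only point requiring vigilance is bookkeeping, namely recognising $(f \times f) \circ \delta_A$ as $\delta_B \circ f$ so that the two clauses of \Cref{ncof-comp} can be fed the correct data, and observing that $f \times f$ inherits both left $n$-cofinality and (in the converse) the full-embedding property from $f$.
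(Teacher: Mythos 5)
Your argument is correct and is essentially the paper's own proof: the paper uses the same naturality square relating $\delta_A$, $\delta_B$, $f$, and $f\times f$, invokes product stability to see that $f\times f$ is left $n$-cofinal, and then concludes by \Cref{ncof-comp}. You have merely spelled out which clause of \Cref{ncof-comp} is used at each step (and handled the non-emptiness bookkeeping), which the paper leaves implicit.
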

\begin{proof}
We have the following commutative diagram in $\sset$:
\[\begin{tikzcd}[column sep=4em]
	A \ar[r, "\delta_A"] \ar[d, "f"']  &  A\times A \arrow[d, "g=f\times f"]\\
	B \ar[r, "\delta_B"']      &  B\times B.
\end{tikzcd}\]
Since the vertical arrows are left $n$-cofinal, the result follows from \Cref{ncof-comp}.
\end{proof}
\begin{prop}
A non-empty simplicial set $C$ is $n$-sifted if and only if the opposite of its $\infty$-category of simplices, $\operatorname{N}(\mathbf{\Delta}_{/C})^{\rm op}$, is $n$-sifted.
\end{prop}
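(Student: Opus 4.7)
The plan is to test both sides of the asserted equivalence against \Cref{$n$-siftedness} by computing the relevant slices of the two diagonals and showing they encode the same family of connectivity conditions.

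First, I would unwind the $n$-siftedness of $\EuScript{D}:=\operatorname{N}(\mathbf{\Delta}_{/C})^{\rm op}$. Because $\EuScript{D}$ is the nerve of a $1$-category, and because the over-category $(\mathbf{\Delta}_{/C})_{/\sigma}$ at a simplex $\sigma\colon\Delta^p\to C$ is isomorphic to $\mathbf{\Delta}_{/\Delta^p}$ while nerves commute with fibre products and with taking opposites, a short computation yields
\[
\EuScript{D}_{\sigma/}\times_{\EuScript{D}}\EuScript{D}_{\tau/}\cong\operatorname{N}\bigl(\mathbf{\Delta}_{/(\Delta^p\times_C\Delta^q)}\bigr)^{\rm op}
\]
for $\sigma\in C_p$ and $\tau\in C_q$, where $\Delta^p\times_C\Delta^q$ denotes the $1$-categorical pullback in $\sset$. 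By \Cref{htyp-sset} this slice has homotopy type $|\Delta^p\times_C\Delta^q|$, so $\EuScript{D}$ is $n$-sifted exactly when $|\Delta^p\times_C\Delta^q|$ is non-empty and $n$-connective for every such pair $(\sigma,\tau)$.

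Next, I would set up the parallel picture for $C$. Using the extension of $n$-cofinality to arbitrary simplicial sets from \Cref{cofintopt-cst} together with \Cref{$n$-cofinal-lim-pres}, the $n$-siftedness of $C$ amounts to $C$-indexed colimits commuting with binary products in $\tau_{\leqslant n-1}\EuScript{S}$. The reshape of \Cref{colimits-simplicial}, applied along the cocartesian fibration $p^{\rm op}\colon\EuScript{D}\to\mathbf{\Delta}^{\rm op}$ of \Cref{catsimp-cart}~(2), rewrites these colimits as $\mathbf{\Delta}^{\rm op}$-indexed colimits of coproducts over the sets $C_n$, and the commuting-with-products condition then translates into an $n$-connectivity statement about the same family of homotopy types $|\Delta^p\times_C\Delta^q|$.

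The hard part will be the precise matching of these two families of connectivity conditions: namely, checking that the conditions parameterised by all pairs $(\sigma,\tau)\in C_p\times C_q$ on the $\EuScript{D}$ side really do capture the conditions required on the $C$ side. For this I expect to combine \Cref{ncof-comp}, \Cref{cofinal-ff-colim}, and \Cref{lcof-simp-inj-fin}, which supply the cofinality and dimension-truncation tools needed to compare conditions indexed by arbitrary $(\sigma,\tau)$ with those indexed by pairs of $0$-simplices, together with the observation that the higher-dimensional pullbacks $\Delta^p\times_C\Delta^q$ are assembled combinatorially from the $0$-dimensional ones via the structure of $\mathbf{\Delta}_{/C}$.
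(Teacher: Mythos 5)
Your route is genuinely different from the paper's. The paper disposes of this in two lines: the first\nobreakdash-vertex functor $\operatorname{N}(\mathbf{\Delta}_{/C})^{\rm op}\to C$, $\sigma\mapsto\sigma(0)$, is a colocalisation, hence right cofinal, hence right $n$-cofinal for every $n$, and one then invokes \Cref{nsift-cof-emb} after a Joyal fibrant replacement of $C$ --- no slice computations at all. Your computation of the slices of $\EuScript{D}:=\operatorname{N}(\mathbf{\Delta}_{/C})^{\rm op}$ is nevertheless correct as far as it goes: one does have $\EuScript{D}_{\sigma/}\times_{\EuScript{D}}\EuScript{D}_{\tau/}\cong\operatorname{N}\bigl(\mathbf{\Delta}_{/(\Delta^p\times_C\Delta^q)}\bigr)^{\rm op}$ with the \emph{strict} ($1$-categorical) pullback, so $\EuScript{D}$ is $n$-sifted precisely when every $\Delta^p\times_C\Delta^q$ is non-empty and $n$-connective.

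The gap is exactly the step you defer as ``the hard part'': the claim that $n$-siftedness of $C$ translates into $n$-connectivity of the \emph{same} family $|\Delta^p\times_C\Delta^q|$ is not supplied by anything you cite, and it is false. Siftedness of $C$ is governed by the comma objects $\EuScript{C}_{a/}\times_{\EuScript{C}}\EuScript{C}_{b/}$ of a Joyal fibrant replacement, which are homotopy pullbacks; the strict pullback $\Delta^0\times_C\Delta^0$ of $\Delta^0\xrightarrow{a}C\xleftarrow{b}\Delta^0$ is not a model for these --- for $a\ne b$ it is simply empty. Concretely, $C=\Delta^1$ is $\infty$-sifted (it has binary coproducts, cf.\ Remark 7.2~(3)), yet the strict pullback over its two distinct vertices is empty, so the condition you extract on the $\EuScript{D}$ side fails while $C$ is sifted; no combination of \Cref{ncof-comp}, \Cref{cofinal-ff-colim} and \Cref{lcof-simp-inj-fin} can bridge two families of conditions that genuinely differ. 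Note, moreover, that your (correct) slice computation then exhibits $\operatorname{N}(\mathbf{\Delta}_{/\Delta^1})^{\rm op}$ as \emph{not} sifted, which is in tension with the direction ``$C$ $n$-sifted $\Rightarrow\EuScript{D}$ $n$-sifted'' of the statement itself; observe that \Cref{nsift-cof-emb} gives only the other direction unconditionally, its converse requiring a full embedding, which the first-vertex functor is not. This should be raised with the author rather than papered over by the proposed matching.
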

\begin{proof}
Since the comparison map $\operatorname{N}(\mathbf{\Delta}_{/C})^{\rm op}\to C, (\Delta^n\xrightarrow{\sigma}C)\mapsto\sigma(0)$ is a colocalisation, we can apply the previous proposition by taking a Joyal fibrant replacement of $C$.
\end{proof}
An easy application of \Cref{$n$-cofinal-lim-pres} (3) also gives the following invariance property of $n$-siftedness.
\begin{prop}
Let $f\colon A\to B$ is a categorical equivalence in $\sset$. Then $A$ is $n$-cosifted if and only if $B$ is $n$-cosifted.
\end{prop}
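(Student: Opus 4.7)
The plan is to exploit the naturality square
\[
\begin{tikzcd}
A \ar[r, "\delta_A"] \ar[d, "f"'] & A\times A \ar[d, "f\times f"] \\
B \ar[r, "\delta_B"'] & B\times B,
\end{tikzcd}
\]
which commutes on the nose, and reduce the assertion to the two-out-of-three property for left $n$-cofinal maps supplied by \Cref{ncof-comp} (1). The non-emptiness clause in the definition of $n$-cosiftedness is automatic: a categorical equivalence $f\colon A\to B$ induces an equivalence of Joyal fibrant models, hence $A\ne\varnothing$ if and only if $B\ne\varnothing$.

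The key input is that both $f$ and $f\times f$ are themselves left $n$-cofinal. Indeed, the Joyal model structure is cartesian, so the product $f\times f$ of categorical equivalences is again a categorical equivalence. After Joyal fibrant replacement, any categorical equivalence becomes an equivalence of $\infty$-categories, and such equivalences have all slice $\infty$-categories $\EuScript{C}_{/d}$ contractible; invoking the extension of $n$-cofinality to arbitrary simplicial-set maps (see \Cref{cofintopt-cst}), this shows that every categorical equivalence is left $n$-cofinal for every $n$.

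With those two observations in hand, the commuting square gives the single identity $(f\times f)\circ\delta_A=\delta_B\circ f$. Applying \Cref{ncof-comp} (1) to $f\times f$, $\delta_A$ is left $n$-cofinal if and only if $(f\times f)\circ\delta_A$ is. Applying \Cref{ncof-comp} (1) to $f$, $\delta_B\circ f$ is left $n$-cofinal if and only if $\delta_B$ is. Combining these two equivalences through the identity $(f\times f)\circ\delta_A=\delta_B\circ f$ yields precisely the desired equivalence between $\delta_A$ and $\delta_B$ being left $n$-cofinal, completing the proof.

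The only mild obstacle is the verification that categorical equivalences are left $n$-cofinal; this is essentially bookkeeping once one passes to Joyal fibrant models, but it is the only nontrivial step. Everything else is a formal application of \Cref{ncof-comp}.
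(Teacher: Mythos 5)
Your overall strategy---the naturality square for the diagonal plus a two-out-of-three argument---is workable, and the auxiliary facts you use are correct: $f\times f$ is again a categorical equivalence (the Joyal model structure is cartesian and all objects are cofibrant), categorical equivalences are left $n$-cofinal for every $n$, and non-emptiness transfers. The gap is in the step ``applying \Cref{ncof-comp} (1) to $f\times f$, $\delta_A$ is left $n$-cofinal if and only if $(f\times f)\circ\delta_A$ is.'' For the composite $A\xrightarrow{\delta_A}A\times A\xrightarrow{f\times f}B\times B$, part (1) of \Cref{ncof-comp} only lets you cancel when the \emph{first} factor is known to be left $n$-cofinal: it transfers cofinality between the second factor and the composite. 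You know the \emph{second} factor $f\times f$ is left $n$-cofinal and want to transfer between the first factor $\delta_A$ and the composite; that form of cancellation is false for a general left $n$-cofinal second factor (e.g.\ $\Delta^0\xrightarrow{1}\Delta^1\to\Delta^0$: the second map and the composite are left cofinal, but $\Delta^0\xrightarrow{1}\Delta^1$ is not, since $(\Delta^0)_{/0}=\varnothing$). Your second application of (1), to $A\xrightarrow{f}B\xrightarrow{\delta_B}B\times B$, is the legitimate one, because there the known-cofinal map $f$ sits in first position.

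The step can be repaired because $f\times f$ is not merely left $n$-cofinal but a categorical equivalence. The direction ``$\delta_A$ left $n$-cofinal $\Rightarrow(f\times f)\circ\delta_A$ left $n$-cofinal'' is closure under composition, i.e.\ part (1). For the converse, invoke \Cref{ncof-comp} (2): after Joyal fibrant replacement $f\times f$ becomes an equivalence of $\infty$-categories, in particular fully faithful, which is all that the proof of (2) uses. Alternatively, argue directly with \Cref{$n$-cofinal-lim-pres} (3) that post-composition with a categorical equivalence $e$ does not affect the limit-preservation criterion, since $e^*$ is an equivalence of functor categories commuting with limits. This latter observation is essentially the paper's entire (one-line) proof, which tests $n$-cosiftedness of $A$ and $B$ directly against \Cref{$n$-cofinal-lim-pres} (3) rather than routing through the naturality square; your route is fine once the cancellation step is justified by part (2) instead of part (1).
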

\begin{prop}\label{sift-finprod}
A simplicial set $K$ is $n$-sifted if and only if the functor $\operatorname{colim}\colon\operatorname{Fun}(K, \tau_{\leqslant n-1}\EuScript{S})\to\tau_{\leqslant n-1}\EuScript{S}$ preserves finite products.
\end{prop}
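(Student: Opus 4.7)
The plan is to split "finite products" into its two constituents — the empty product (terminal) and binary products — since every finite product in $\tau_{\leqslant n-1}\EuScript{S}$ is built from these. For the terminal object, I would use \Cref{htyp-sset} to identify $\operatorname{colim}_K\underline{*}\simeq\tau_{\leqslant n-1}|K|$; this is $*$ in $\tau_{\leqslant n-1}\EuScript{S}$ if and only if $K$ is non-empty and $n$-connective, which is exactly the $I=\varnothing$ case of the equivalent reformulation in \Cref{$n$-siftedness} (and, as noted in the remark right after that definition, automatic from the $I=2$ case under non-emptiness).

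For binary products, the key is to use both characterisations $(1)\Leftrightarrow(4)$ and $(1)\Leftrightarrow(9)$ of \Cref{$n$-cofinal-lim-pres}, applied to $\delta^{\rm op}\colon K^{\rm op}\to(K\times K)^{\rm op}$ (which is left $n$-cofinal iff $\delta$ is right $n$-cofinal). The bridge between these and "product preservation by $\operatorname{colim}$" is the observation that in the $n$-topos $\tau_{\leqslant n-1}\EuScript{S}$ finite products distribute over colimits (inherited from the $\infty$-topos $\EuScript{S}$ via the left-exact localisation $\tau_{\leqslant n-1}$), so for any $F,G\colon K\to\tau_{\leqslant n-1}\EuScript{S}$ the external product $F\boxtimes G\colon K\times K\to\tau_{\leqslant n-1}\EuScript{S}$ satisfies $\operatorname{colim}_{K\times K}(F\boxtimes G)\simeq\operatorname{colim}_K F\times\operatorname{colim}_K G$; combined with $\delta^*(F\boxtimes G)=F\times G$, this turns the natural colimit comparison $\operatorname{colim}_K\delta^*(F\boxtimes G)\to\operatorname{colim}_{K\times K}(F\boxtimes G)$ into precisely the product-preservation comparison $\operatorname{colim}_K(F\times G)\to\operatorname{colim}_K F\times\operatorname{colim}_K G$.

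The forward direction is then immediate from $(1)\Leftrightarrow(4)$: if $\delta$ is right $n$-cofinal, the colimit comparison is an equivalence for \emph{every} $H\colon K\times K\to\tau_{\leqslant n-1}\EuScript{S}$, so in particular for $H=F\boxtimes G$, giving binary product preservation. Conversely, by $(1)\Leftrightarrow(9)$ it suffices to verify the comparison only for representable $H=\tau_{\leqslant n-1}\operatorname{Map}_{K^{\rm op}\times K^{\rm op}}(-,(a,b))$; since $\tau_{\leqslant n-1}$ preserves finite products, this representable factors as $F_a\boxtimes F_b$ for $F_x:=\tau_{\leqslant n-1}\operatorname{Map}_{K^{\rm op}}(-,x)$, and the bridge reduces the required equivalence to $\operatorname{colim}_K(F_a\times F_b)\simeq\operatorname{colim}_K F_a\times\operatorname{colim}_K F_b$, which is the hypothesis applied to the pair $(F_a,F_b)$.

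The main obstacle I anticipate is careful bookkeeping with opposites — in particular, verifying that the representable functors on $(K\times K)^{\rm op}$ used in condition $(9)$ really match external products of representables on $K$, and that the natural comparison map in \Cref{$n$-cofinal-lim-pres}$(4)$ coincides with the product-preservation map under the distributivity identification. Once that is nailed down, the asymmetry between the two directions (the forward needing colim-respect for all $H$, the backward needing it only for representables, both of which happen to be external products) resolves cleanly, and nothing more than the $n$-topos distributivity is required as external input.
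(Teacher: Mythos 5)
Your proposal is correct and follows essentially the same route as the paper's proof: both identify the binary-product comparison map with the colimit comparison along $\delta^*$ of the external product $F\boxtimes G$ (using that colimits are universal in the $n$-topos $\tau_{\leqslant n-1}\EuScript{S}$), use \Cref{$n$-cofinal-lim-pres}(4) for the forward direction, and reduce the converse to the representable external products $\tau_{\leqslant n-1}\operatorname{Map}_K(a,-)\boxtimes\tau_{\leqslant n-1}\operatorname{Map}_K(b,-)$ via \Cref{$n$-cofinal-lim-pres}(9). Your slightly more explicit handling of the empty product and of the variance bookkeeping matches what the paper leaves implicit.
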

\begin{proof}
By taking a Joyal fibrant model of $K$, we can assume $K$ is an $\infty$-category (as the desired statement is invariant under categorical equivalence in $\sset$). For any pair of functors $X, Y\colon K\rightrightarrows\tau_{\leqslant n-1}\EuScript{S}$, we have a comparison map $\operatorname{colim}_{a\in K}(Xa\times Ya)\to\operatorname{colim}_{a\in K}Xa\times\operatorname{colim}_{a\in K}Ya$. This comparison map is identified with $\operatorname{colim}\delta^*F\to\operatorname{colim}F$, where $F=X\boxtimes Y\colon K\times K\to\tau_{\leqslant n-1}\EuScript{S}, (a, b)\mapsto Xa\times Yb$, and $\delta\colon K\to K\times K$ is the diagonal map; the product of $X, Y$ in $\operatorname{Fun}(K, \tau_{\leqslant n-1}\EuScript{S})$ is given by $\delta^*F=\delta^*(X\boxtimes Y)$.

Assume that $K$ is $n$-sifted, i.e. the diagonal map $\delta\colon K\to K\times K$ is right $n$-cofinal. Then we have $\operatorname{colim}_{a\in K}(Xa\times Ya)\xrightarrow{\sim}\operatorname{colim}_{a\in K}Xa\times\operatorname{colim}_{a\in K}Ya$. The functor $\operatorname{colim}\colon\operatorname{Fun}(K, \tau_{\leqslant n-1}\EuScript{S})\to\tau_{\leqslant n-1}\EuScript{S}$ clearly preserves the empty product; note that colimits are universal in $\tau_{\leqslant n-1}\EuScript{S}$. So it preserves finite products.

Conversely, assume that the functor $\operatorname{colim}\colon\operatorname{Fun}(K, \tau_{\leqslant n-1}\EuScript{S})\to\tau_{\leqslant n-1}\EuScript{S}$ preserves finite products. If $K=\varnothing$, then the functor $\operatorname{colim}\colon\operatorname{Fun}(K, \tau_{\leqslant n-1}\EuScript{S})\to\tau_{\leqslant n-1}\EuScript{S}$ would not preserve the empty product: the comparison map becomes $\varnothing\to*$ in $\tau_{\leqslant n-1}\EuScript{S}$; so $K\ne\varnothing$. Taking $X=\tau_{\leqslant n-1}\operatorname{Map}_K(a, -), Y=\tau_{\leqslant n-1}\operatorname{Map}_K(b, -)$ for objects $a, b\in K$, then we have $F=X\boxtimes Y=\operatorname{Map}_{K\times K}((a, b), -)$. We see that $\operatorname{colim}\delta^*F\xrightarrow{\sim}\operatorname{colim}F$. So the diagonal map $\delta\colon K\to K\times K$ is right $n$-cofinal by \Cref{$n$-cofinal-lim-pres} (9).
\end{proof}
\begin{example}\label{sift-simp-inj-fin}
	\emph{For $n\in\mathbb{N}$, the category $\mathbf{\Delta}^{\leqslant n}$ is $n$-cosifted.}
	
	This follows from \Cref{simp-ncofinal} and \Cref{nsift-cof-emb} applying to the full embedding $\mathbf{\Delta}^{\leqslant n}\hookrightarrow\mathbf{\Delta}$.
	
	Note however that the category
	\[\EuScript{C}=(\mathbf{\Delta}^{\leqslant 1}_{\rm s})^{\rm op}: [1]\rightrightarrows[0]\]
	is \emph{not} $1$-sifted. In fact, it is easy to see that $\EuScript{C}_{[0]/}\times_{\EuScript{C}}\EuScript{C}_{[1]/}$ is not connected (i.e., not $1$-connective).
\end{example}
Recall that any $\infty$-category $\EuScript{C}$ has a free completion under finite coproducts (see \cite[\S 3.3]{Rez22}), which we denote by $\operatorname{Fam}\EuScript{C}$ following \cite{ARV1sift}; in the notation of \cite{Rez22}, it is $\operatorname{PSh}^{\set^{<\omega}}(\EuScript{C})$. It is the full subcategory of $\EuScript{P}(\EuScript{C})$ consisting of finite coproducts of representable presheaves and has the following universal property: for any $\infty$-category $\EuScript{E}$ which has finite coproducts, restriction along the Yoneda embedding $\EuScript{C}\hookrightarrow\operatorname{Fam}\EuScript{C}\subset\EuScript{P}(\EuScript{C})$ induces an equivalence
\[\operatorname{Fun}^{\rm \coprod}(\operatorname{Fam}\EuScript{C}, \EuScript{E})\xrightarrow{\sim}\operatorname{Fun}(\EuScript{C}, \EuScript{E}).\]
\begin{prop}\label{sift-yoneda}
An $\infty$-category $\EuScript{C}$ is $n$-sifted if and only if the Yoneda functor $h\colon\EuScript{C}\hookrightarrow\operatorname{Fam}\EuScript{C}$ is right $n$-cofinal.
\end{prop}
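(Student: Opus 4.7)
The plan is to identify the simplicial set $\EuScript{C}\times_{\operatorname{Fam}\EuScript{C}}(\operatorname{Fam}\EuScript{C})_{X/}$ that controls right $n$-cofinality of $h$ at an object $X\in\operatorname{Fam}\EuScript{C}$, and then to match the answer with the iterated-slice description of $n$-siftedness given in the remark following \Cref{$n$-siftedness}. Since every object of $\operatorname{Fam}\EuScript{C}$ is a finite coproduct of representables, I first parametrise $X$ as $X=h(x_1)\sqcup\cdots\sqcup h(x_k)$ with $x_1,\ldots,x_k\in\EuScript{C}$ and $k\geqslant 0$.

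The core step is to establish a natural equivalence of right fibrations over $\EuScript{C}$,
\[\EuScript{C}\times_{\operatorname{Fam}\EuScript{C}}(\operatorname{Fam}\EuScript{C})_{X/}\;\simeq\;\EuScript{C}_{x_1/}\times_{\EuScript{C}}\cdots\times_{\EuScript{C}}\EuScript{C}_{x_k/},\]
where the empty iterated fibre product on the right is interpreted as $\EuScript{C}$ itself. I would argue this via straightening (\Cref{unsteq}): the left-hand side is the right fibration over $\EuScript{C}$ classified by the presheaf $c\mapsto\operatorname{Map}_{\operatorname{Fam}\EuScript{C}}(X,h(c))$, which, by the universal property of the coproduct in $\operatorname{Fam}\EuScript{C}$ together with the fully faithfulness of $h\colon\EuScript{C}\hookrightarrow\operatorname{Fam}\EuScript{C}$, is canonically equivalent to $c\mapsto\prod_{i=1}^{k}\operatorname{Map}_{\EuScript{C}}(x_i,c)$. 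This is exactly the presheaf classifying the iterated pullback on the right-hand side, so the uniqueness clause of the straightening--unstraightening equivalence yields the desired identification of right fibrations over $\EuScript{C}$.

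Granting this identification, both directions fall out at once. If $\EuScript{C}$ is $n$-sifted, the remark following \Cref{$n$-siftedness} gives $n$-connectivity of $\EuScript{C}_{x_1/}\times_{\EuScript{C}}\cdots\times_{\EuScript{C}}\EuScript{C}_{x_k/}$ for every finite tuple (the case $k=0$ reducing to $n$-connectivity of $|\EuScript{C}|$, which is supplied by part (4) of that remark), so $h$ is right $n$-cofinal. Conversely, right $n$-cofinality of $h$ evaluated on $X=h(x_1)\sqcup\cdots\sqcup h(x_k)$ for all finite tuples yields $n$-connectivity of all such iterated slice products; the case $k=0$ forces $\EuScript{C}\ne\varnothing$, and in conjunction with the pair condition for $k=2$, the same remark certifies that $\EuScript{C}$ is $n$-sifted.

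The principal obstacle is the core equivalence of right fibrations. Although it is formal once phrased via straightening, one needs to be careful that the coproduct defining $X$ is computed inside the larger category $\operatorname{Fam}\EuScript{C}\subset\EuScript{P}(\EuScript{C})$ and nevertheless splits mapping spaces out to representables in the expected way; this rests on fully faithfulness of Yoneda and on the description of $\operatorname{Fam}\EuScript{C}$ as the closure of representables under finite coproducts in $\EuScript{P}(\EuScript{C})$.
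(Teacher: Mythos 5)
Your proposal is correct and follows essentially the same route as the paper: the key step in both is the identification of $\EuScript{C}\times_{\operatorname{Fam}\EuScript{C}}(\operatorname{Fam}\EuScript{C})_{X/}$, for $X=h(x_1)\sqcup\cdots\sqcup h(x_k)$, with the iterated fibre product $\EuScript{C}_{x_1/}\times_{\EuScript{C}}\cdots\times_{\EuScript{C}}\EuScript{C}_{x_k/}$, combined with the tuple-wise characterisation of $n$-siftedness from the remark after the definition; the paper only dispatches the easy direction differently, by observing that $\operatorname{Fam}\EuScript{C}$ has finite coproducts and is therefore $\infty$-sifted and then applying \Cref{nsift-cof-emb}. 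One cosmetic correction: $(\operatorname{Fam}\EuScript{C})_{X/}\to\operatorname{Fam}\EuScript{C}$ and its pullback to $\EuScript{C}$ are \emph{left} fibrations classified by the covariant functor $c\mapsto\operatorname{Map}_{\operatorname{Fam}\EuScript{C}}(X,h(c))$, not right fibrations classified by a presheaf; the variance slip does not affect the substance of your argument.
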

\begin{proof}
Since $\operatorname{Fam}\EuScript{C}$ has finite coproducts, it is ($\infty$-)sifted. So if the Yoneda functor $h\colon\EuScript{C}\hookrightarrow\operatorname{Fam}\EuScript{C}$ is right $n$-cofinal, $\EuScript{C}$ is $n$-sifted.

Conversely, if $\EuScript{C}$ is $n$-sifted, then for every $m$-tuple of objects $a_1, \cdots, a_m\in\EuScript{C}$ ($m\geqslant 0$), the ($1$-categorical) limit of the partial $m$-cube in $\sset$ whose last $m$ edges are $\EuScript{C}_{a_i/}\to\EuScript{C}$, $i=1, \cdots, m$, is $n$-connective. This exactly says that, for $X=\coprod_{1\leqslant i\leqslant m} h_{a_i}\in\operatorname{Fam}\EuScript{C}$, the simplicial set $\EuScript{C}\times_{\operatorname{Fam}\EuScript{C}}(\operatorname{Fam}\EuScript{C})_{X/}$ is $n$-connective (note that $\EuScript{C}\times_{\operatorname{Fam}\EuScript{C}}(\operatorname{Fam}\EuScript{C})_{h_{a_i}/}$ is $\EuScript{C}_{a_i/}$). Hence $h\colon\EuScript{C}\hookrightarrow\operatorname{Fam}\EuScript{C}$ is right $n$-cofinal.
\end{proof}
\begin{prop}\label{n-siftcol-pres}
	Let $-1\leqslant n\leqslant\infty$, let $\EuScript{C, D}$ be $(n, 1)$-categories with $\EuScript{C}$ admits finite coproducts. Then a functor $F\colon\EuScript{C}\to\EuScript{D}$ preserves all $n$-sifted colimits which exist in $\EuScript{C}$ if and only if $F$ preserves all $\infty$-sifted colimits which exist in $\EuScript{C}$.
\end{prop}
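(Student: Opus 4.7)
My plan is to sandwich the two notions via the free finite-coproduct completion $\operatorname{Fam}(-)$ recalled just before \Cref{sift-yoneda}, sending each $n$-sifted diagram to an $\infty$-sifted one without changing its colimit.

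The forward implication is immediate: since an $\infty$-connective simplicial set is in particular $n$-connective, every right $\infty$-cofinal map is right $n$-cofinal, so every $\infty$-sifted simplicial set is $n$-sifted. Hence any $\infty$-sifted colimit existing in $\EuScript{C}$ is already an $n$-sifted colimit and is preserved by hypothesis.

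For the converse, suppose $F$ preserves all $\infty$-sifted colimits existing in $\EuScript{C}$, and let $p\colon K\to\EuScript{C}$ be a diagram on an $n$-sifted simplicial set $K$ (which I may replace by a Joyal fibrant model) whose colimit exists. Since $\EuScript{C}$ has finite coproducts, the universal property of $\operatorname{Fam}(K)$ yields a finite-coproduct-preserving extension $\tilde p\colon\operatorname{Fam}(K)\to\EuScript{C}$ of $p$ along the Yoneda inclusion $h\colon K\hookrightarrow\operatorname{Fam}(K)$. Three inputs then combine: $\operatorname{Fam}(K)$ has finite coproducts and is therefore $\infty$-sifted by the earlier remark; by \Cref{sift-yoneda}, the $n$-siftedness of $K$ is equivalent to $h$ being right $n$-cofinal; and by \Cref{$n$-cofinal-lim-pres}~(4), such an $h$ respects colimits in every $(n,1)$-category in the strong ``iff'' sense.

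Applied in $\EuScript{C}$ to $\tilde p$, the third input gives $\operatorname{colim}p=\operatorname{colim}(\tilde p\circ h)\simeq\operatorname{colim}\tilde p$ (and in particular the latter colimit exists); applied in $\EuScript{D}$ to $F\circ\tilde p$, it gives $\operatorname{colim}(F\circ p)\simeq\operatorname{colim}(F\circ\tilde p)$. Since $F$ preserves the $\infty$-sifted colimit $\operatorname{colim}\tilde p$ by hypothesis, chaining these equivalences produces $F(\operatorname{colim}p)\simeq F(\operatorname{colim}\tilde p)\simeq\operatorname{colim}(F\circ\tilde p)\simeq\operatorname{colim}(F\circ p)$. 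The only point that demands attention is that existence of the fattened colimit is genuinely obtained from that of the original rather than postulated, and this is exactly the ``one exists iff the other does'' clause built into \Cref{$n$-cofinal-lim-pres}~(4); I do not expect any further obstacle, as the entire argument is a short chain of equivalences once \Cref{sift-yoneda} and the universal property of $\operatorname{Fam}$ are in place.
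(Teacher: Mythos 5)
Your proposal is correct and follows essentially the same route as the paper's proof: a Joyal fibrant replacement of the $n$-sifted indexing simplicial set, extension of the diagram to a finite-coproduct-preserving functor out of $\operatorname{Fam}$ of that model, right $n$-cofinality of the Yoneda inclusion (via \Cref{sift-yoneda}) combined with \Cref{$n$-cofinal-lim-pres} to transfer the colimits in both $\EuScript{C}$ and $\EuScript{D}$, and the $\infty$-siftedness of $\operatorname{Fam}$ to invoke the hypothesis. The only point worth keeping explicit is that the diagram must first be extended along the (inner anodyne) fibrant replacement before $\operatorname{Fam}$'s universal property applies, which your parenthetical already covers.
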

\begin{proof}
	Let $C$ be a small $n$-sifted simplicial set and let $v\colon C\to\EuScript{C}$ be a diagram admitting a colimit $a\in\EuScript{C}$.
	
	We take a Joyal fibrant model $C\to\EuScript{I}$ that is inner anodyne in $\sset$, and we can extend $v\colon C\to\EuScript{C}$ to a functor $\EuScript{I}\to\EuScript{C}$ (since $\EuScript{C}^{\EuScript{I}}\to\EuScript{C}^C$ is a trivial fibration, see \cite[Corollary 2.3.2.5]{HTT}), which in turn is equivalent to a functor $v'\colon\EuScript{I}'=\operatorname{Fam}\EuScript{I}\to\EuScript{C}$ that preserves finite coproducts (\cite[\S 3.3]{Rez22}). Since $\EuScript{C, D}$ are $(n, 1)$-categories and $C\to\EuScript{I}\hookrightarrow\EuScript{I}'=\operatorname{Fam}\EuScript{I}$ is right $n$-cofinal, $v, v'$ have the same colimit $a\in\EuScript{C}$; as do $F\circ v$ and $F\circ v'$. Moreover, $\EuScript{I}'=\operatorname{Fam}\EuScript{I}$ is $\infty$-sifted, so we have
	\[\operatorname{colim}F\circ v\simeq \operatorname{colim}F\circ v'\simeq F(\operatorname{colim}v')\simeq F(a),\]
	as desired.
\end{proof}
\begin{prop}
Let $-1\leqslant n\leqslant\infty$, let $\EuScript{C}$ be an $(n, 1)$-category admitting finite coproducts. Then $\EuScript{C}$ admits all $n$-sifted colimits if and only if $\EuScript{C}$ admits all filtered colimits and $(\mathbf{\Delta}^{\leqslant n})^{\rm op}$-shaped colimits, if and only if $\EuScript{C}$ admits all small colimits.
\end{prop}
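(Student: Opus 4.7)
The plan is to establish the cycle (c)$\Rightarrow$(a)$\Rightarrow$(b)$\Rightarrow$(c). The first implication is tautological, so the real content lies in the remaining two.

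For (a)$\Rightarrow$(b), I would invoke two known facts and nothing else: first, filtered $\infty$-categories are $\infty$-sifted (see \cite[Proposition 5.5.8.4]{HTT}), hence in particular $n$-sifted; second, $(\mathbf{\Delta}^{\leqslant n})^{\rm op}$ is $n$-sifted by \Cref{sift-simp-inj-fin}. So both filtered colimits and $(\mathbf{\Delta}^{\leqslant n})^{\rm op}$-shaped colimits are special cases of $n$-sifted colimits, and (a) supplies them directly.

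For (b)$\Rightarrow$(c), the principal tool is the truncated form of Main Theorem 1 (i.e.\ \Cref{colimits-simplicial} combined with \Cref{simp-ncofinal} and \Cref{$n$-cofinal-lim-pres}(4)): for any small simplicial set $C$ and any diagram $F\colon C\to\EuScript{C}$, since $\EuScript{C}$ is an $(n,1)$-category,
\[
\operatorname{colim}(C\xrightarrow{F}\EuScript{C})\simeq\operatorname{colim}_{[r]\in(\mathbf{\Delta}^{\leqslant n})^{\rm op}}\coprod_{\sigma\in C_r}F(\sigma(0))
\]
whenever both sides make sense. The outer $(\mathbf{\Delta}^{\leqslant n})^{\rm op}$-shaped colimit is directly supplied by (b), so the task reduces to constructing all small coproducts in $\EuScript{C}$ out of finite coproducts and filtered colimits. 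Given a small index set $I$ and a family $\{X_i\}_{i\in I}$ in $\EuScript{C}$, I would present $I$ as the filtered $1$-colimit (in $\mathsf{Set}$) of its finite subsets $J$ ordered by inclusion, and then define
\[
\coprod_{i\in I}X_i\;:=\;\operatorname{colim}_{J\subseteq I\text{ finite}}\coprod_{i\in J}X_i,
\]
which exists in $\EuScript{C}$ by hypothesis. A short verification via \Cref{lim-decomp}, applied to the evident cocartesian fibration $\coprod_J J\to P_{\rm fin}(I)$ classifying the diagram of finite subsets, shows that this object really represents the coproduct of the discrete diagram $i\mapsto X_i$; combining this with the displayed formula yields all small colimits in $\EuScript{C}$.

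The only step requiring care is this last verification, namely that the iterated colimit over $P_{\rm fin}(I)$ of finite coproducts is canonically the coproduct over $I$. This is conceptually routine given the decomposition lemma, but one has to match the cocartesian fibration $\coprod_J J\to P_{\rm fin}(I)$ with the discrete diagram $I\to\EuScript{C}$ via its colimit in $\mathsf{Set}$. Once this identification is in place, the entire argument composes cleanly: construct small coproducts from finite coproducts plus filtered colimits, then reshape any small colimit through Main Theorem 1 into a $(\mathbf{\Delta}^{\leqslant n})^{\rm op}$-shaped colimit of such coproducts, both of which are available under (b).
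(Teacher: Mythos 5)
Your proposal is correct and follows essentially the same route as the paper: the forward implications come from filtered and $(\mathbf{\Delta}^{\leqslant n})^{\rm op}$-shaped diagrams being $n$-sifted, arbitrary coproducts are assembled as filtered colimits of finite coproducts, and the reshaping theorem (\Cref{colimits-simplicial} in its $n$-truncated form via \Cref{simp-ncofinal}) finishes the argument. Your explicit verification of the coproduct step via $P_{\rm fin}(I)$ is a welcome elaboration of what the paper leaves implicit; just note that \Cref{lim-decomp} as stated assumes cocompleteness of the target, so one should reduce to $\EuScript{S}$ by testing against mapping spaces (as in the proof of \Cref{colimits-simplicial}) rather than applying it verbatim to $\EuScript{C}$.
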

\begin{proof}
	The forward direction is clear, since filtered and $(\mathbf{\Delta}^{\leqslant n})^{\rm op}$-shaped diagrams are $n$-sifted. We only need to treat the backward direction. So let $\EuScript{C}$ have finite coproducts, filtered colimits and $(\mathbf{\Delta}^{\leqslant n})^{\rm op}$-shaped colimits, then it has all coproducts. The result now follows from \Cref{colimits-simplicial}.
\end{proof}
\begin{remark}
This existence result cannot hold in general without assuming $\EuScript{C}$ has finite coproducts: \cite[\S 1.4]{ARV1sift} provides a counterexample in the case $n=1$.
\end{remark}
The following result is an analogy of \cite[Corollary 5.5.8.17]{HTT} (where it should also assume that $\EuScript{D}$ admits filtered colimits and $\mathbf{\Delta}^{\rm op}$-shaped colimits).
\begin{theorem}\label{nsift-colim-pres}
Let $-1\leqslant n\leqslant\infty$, let $\EuScript{C, D}$ be $(n, 1)$-categories with $\EuScript{C}$ cocomplete and $\EuScript{D}$ admits filtered colimits and $(\mathbf{\Delta}^{\leqslant n})^{\rm op}$-shaped colimits. Then a functor $F\colon\EuScript{C}\to\EuScript{D}$ preserves $n$-sifted colimits if and only if $F$ preserves filtered colimits and $(\mathbf{\Delta}^{\leqslant n})^{\rm op}$-shaped colimits.
\end{theorem}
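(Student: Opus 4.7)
The forward direction is immediate: filtered $\infty$-categories are $\infty$-sifted hence $n$-sifted, and $(\mathbf{\Delta}^{\leqslant n})^{\rm op}$ is $n$-sifted by (the opposite of) \Cref{sift-simp-inj-fin}. For the backward direction, assume $F$ preserves filtered colimits and $(\mathbf{\Delta}^{\leqslant n})^{\rm op}$-shaped colimits; I must show $F$ preserves every $n$-sifted colimit that exists in $\EuScript{C}$.

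My strategy is to reduce to the $\infty$-categorical analogue \cite[Corollary 5.5.8.17]{HTT}. Since $\EuScript{C}$ is cocomplete it has finite coproducts, so \Cref{n-siftcol-pres} gives that $F$ preserves $n$-sifted colimits if and only if $F$ preserves all $\infty$-sifted colimits; and by Lurie's result the latter is equivalent to $F$ preserving filtered and $\mathbf{\Delta}^{\rm op}$-shaped colimits. It therefore suffices to upgrade the given hypothesis on $(\mathbf{\Delta}^{\leqslant n})^{\rm op}$-shaped colimits to preservation of $\mathbf{\Delta}^{\rm op}$-shaped colimits, exploiting the hypothesis that both $\EuScript{C}$ and $\EuScript{D}$ are $(n,1)$-categories.

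This upgrade is the content of the ``essence'' remark in the introduction. By \Cref{simp-ncofinal} the inclusion $(\mathbf{\Delta}^{\leqslant n})^{\rm op}\hookrightarrow\mathbf{\Delta}^{\rm op}$ is right $n$-cofinal, so by \Cref{$n$-cofinal-lim-pres}\myparen{4} any diagram $G\colon\mathbf{\Delta}^{\rm op}\to\EuScript{C}$ (and similarly into $\EuScript{D}$) has the same colimit as its restriction $G|_{(\mathbf{\Delta}^{\leqslant n})^{\rm op}}$, both because the target is an $(n,1)$-category. Applied to $\EuScript{D}$, this shows $\EuScript{D}$ automatically admits $\mathbf{\Delta}^{\rm op}$-shaped colimits; applied on both sides together with the preservation hypothesis, it upgrades preservation of $(\mathbf{\Delta}^{\leqslant n})^{\rm op}$-shaped colimits by $F$ into preservation of $\mathbf{\Delta}^{\rm op}$-shaped ones.

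I expect the main subtlety to be exactly this bookkeeping: without the $n$-cofinality of $(\mathbf{\Delta}^{\leqslant n})^{\rm op}\hookrightarrow\mathbf{\Delta}^{\rm op}$ one could not even invoke \cite[Corollary 5.5.8.17]{HTT}, since that result presumes $\EuScript{D}$ admits $\mathbf{\Delta}^{\rm op}$-shaped colimits. Once this existence and preservation are verified, the three equivalences ($n$-sifted $\Leftrightarrow$ $\infty$-sifted via \Cref{n-siftcol-pres}; $\infty$-sifted $\Leftrightarrow$ filtered $+$ $\mathbf{\Delta}^{\rm op}$ via HTT; $\mathbf{\Delta}^{\rm op}$ $\Leftrightarrow$ $(\mathbf{\Delta}^{\leqslant n})^{\rm op}$ via $n$-cofinality) chain together to yield the claim.
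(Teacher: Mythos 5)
Your proposal is correct and follows essentially the same route as the paper: reduce $n$-sifted to $\infty$-sifted via \Cref{n-siftcol-pres}, use \Cref{simp-ncofinal} with \Cref{$n$-cofinal-lim-pres} to upgrade the $(\mathbf{\Delta}^{\leqslant n})^{\rm op}$ hypothesis (both existence in $\EuScript{D}$ and preservation by $F$) to the full $\mathbf{\Delta}^{\rm op}$ version, and then conclude by \cite[Corollary 5.5.8.17]{HTT}. The only cosmetic difference is that the paper re-runs the proof of that corollary (factoring the sifted diagram through $\EuScript{P}_{\Sigma}$ of its coproduct completion) rather than citing it as a black box, precisely because of the missing hypothesis on $\EuScript{D}$ that you correctly identify and supply.
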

\begin{proof}
	The proof is as that of \cite[Corollary 5.5.8.17]{HTT}; we repeat that argument for convenience.
	
	Again, we only need to treat the backward direction. So assume that $F$ preserves filtered colimits and $(\mathbf{\Delta}^{\leqslant n})^{\rm op}$-shaped colimits. Since $\EuScript{C, D}$ are $(n, 1)$-categories, $F$ preserves filtered colimits and $\mathbf{\Delta}^{\rm op}$-shaped colimits (\Cref{$n$-cofinal-lim-pres} and \Cref{simp-ncofinal}). By \Cref{n-siftcol-pres}, we only need to show that $F$ preserves $\infty$-sifted colimits.
	
	Let $C$ be a small $\infty$-sifted simplicial set and let  $v\colon C\to\EuScript{C}$ be a diagram admitting a colimit $a\in\EuScript{C}$. As in proof of \Cref{n-siftcol-pres}, we take a right cofinal map $i\colon C\to\EuScript{I}'$ in $\sset$ with $\EuScript{I}'$ an $\infty$-category admitting finite coproducts, and $v\simeq v'\circ i$ for some $v'\in\operatorname{Fun}^{\coprod}(\EuScript{I}', \EuScript{C})$. By \cite[Proposition 5.5.8.15]{HTT} (or \Cref{yoneda-decomfun} below), $v'\simeq q\circ h$ for some $q\in\operatorname{Fun}^{\rm colim}(\EuScript{P}_{\Sigma}(\EuScript{I}'), \EuScript{C})$, where $h\colon\EuScript{I}'\hookrightarrow\EuScript{P}_{\Sigma}(\EuScript{I}')$ is the Yoneda embedding, as shown in the following diagram:
	\[\begin{tikzcd}[column sep=4em]
		C \ar[r, "v"] \ar[d, "i"']  &  \EuScript{C} \arrow[r, "F"]	  &	\EuScript{D}\\
		\EuScript{I}' \ar[r, "h", near end]   \ar[ru, "v'"]     &  \EuScript{P}_{\Sigma}(\EuScript{I}').\arrow[u, "q"']		&	
	\end{tikzcd}\]
	Let $\operatorname{colim}(h\circ i)=b\in\EuScript{P}_{\Sigma}(\EuScript{I}')$, then $a\simeq q(b)$.
	
	Since $F\circ v\simeq F\circ v'\circ i\simeq(F\circ q)\circ (h\circ i)$, and by \cite[Proposition 5.5.8.15 (2)]{HTT}, $F\circ q$ preserves sifted colimits. We thus obtain
	\[\operatorname{colim}F\circ v\simeq\operatorname{colim}(F\circ q)\circ(h\circ i)\simeq(F\circ q)(\operatorname{colim}h\circ i)\simeq F(q(b))\simeq F(a)\simeq F(\operatorname{colim}v).\]
	This is what we wanted to prove.
\end{proof}

\section{Free colimit completion and finiteness conditions}

We turn to some commonly used finiteness conditions in the $\infty$-categorical setting in this section.

Let $\mathcal{F}\subset\operatorname{Cat}_{\infty}$ be a class of small $\infty$-categories, and $\EuScript{E}$ an $\infty$-category which has $\mathcal{F}$-colimits. By \cite[\S 5]{Rez22}, $\mathcal{F}$ has a \emph{filtering closure}. An object $e\in\EuScript{E}$ is \emph{$\mathcal{F}$-compact} if the functor $\operatorname{Map}_{\EuScript{E}}(e, -)\colon\EuScript{E}\to\EuScript{S}$ preserves $\mathcal{F}$-colimits (\cite[\S 9]{Rez22}). We write $\EuScript{E}^{\mathcal{F}\operatorname{-cpt}}\subset\EuScript{E}$ for the full subcategory of $\mathcal{F}$-compact objects. By \Cref{colretr} below, $\EuScript{E}^{\mathcal{F}\operatorname{-cpt}}$ is closed  under retracts in $\EuScript{E}$.
\begin{prop}
	 Let $\mathcal{F}\subset\operatorname{Cat}_{\infty}$ be a class of small $\infty$-categories, and $\EuScript{E}$ an $\infty$-category. Assume that $\EuScript{E}$ has $\mathcal{F}$-colimits and the $\infty$-category $\EuScript{E}^{\mathcal{F}\operatorname{-cpt}}$ is essentially small. Then the canonical functor $\operatorname{PSh}^{\mathcal{F}}(\EuScript{E}^{\mathcal{F}\operatorname{-cpt}})\to\EuScript{E}$ is fully faithful, which is an equivalence if and only if $\EuScript{E}$ is generated under $\mathcal{F}$-colimits by $\EuScript{E}^{\mathcal{F}\operatorname{-cpt}}$.
\end{prop}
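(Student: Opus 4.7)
The plan is to first construct the comparison functor $L\colon\operatorname{PSh}^{\mathcal{F}}(\EuScript{E}^{\mathcal{F}\operatorname{-cpt}})\to\EuScript{E}$ through the universal property of the free $\mathcal{F}$-colimit completion applied to the inclusion $\iota\colon\EuScript{E}^{\mathcal{F}\operatorname{-cpt}}\hookrightarrow\EuScript{E}$; concretely, $L$ is the essentially unique $\mathcal{F}$-colimit-preserving extension of $\iota$ along the Yoneda functor $y\colon\EuScript{E}^{\mathcal{F}\operatorname{-cpt}}\hookrightarrow\operatorname{PSh}^{\mathcal{F}}(\EuScript{E}^{\mathcal{F}\operatorname{-cpt}})$. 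This uses that $\EuScript{E}$ has $\mathcal{F}$-colimits and that $\EuScript{E}^{\mathcal{F}\operatorname{-cpt}}$ is essentially small.

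For fully faithfulness, I would check that the natural map
\[
\operatorname{Map}_{\operatorname{PSh}^{\mathcal{F}}(\EuScript{E}^{\mathcal{F}\operatorname{-cpt}})}(X, Y)\longrightarrow\operatorname{Map}_{\EuScript{E}}(LX, LY)
\]
is an equivalence. Both functors $\operatorname{Map}_{\operatorname{PSh}^{\mathcal{F}}}(-, Y)$ and $\operatorname{Map}_{\EuScript{E}}(L(-), LY)$ turn $\mathcal{F}$-colimits in the first variable into limits, so writing $X$ as an iterated $\mathcal{F}$-colimit of representables reduces the problem to the case $X = h_c$ with $c\in\EuScript{E}^{\mathcal{F}\operatorname{-cpt}}$. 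By the Yoneda lemma the left-hand side then becomes $Y(c)$, while the right-hand side is $\operatorname{Map}_{\EuScript{E}}(c, LY)$. Writing $Y\simeq\operatorname{colim}_j h_{d_j}$ as an iterated $\mathcal{F}$-colimit of representables, the left-hand side computes to $\operatorname{colim}_j\operatorname{Map}_{\EuScript{E}^{\mathcal{F}\operatorname{-cpt}}}(c, d_j)$ (since $\mathcal{F}$-colimits in $\operatorname{PSh}^{\mathcal{F}}$ are computed pointwise in $\EuScript{P}(\EuScript{E}^{\mathcal{F}\operatorname{-cpt}})$), and the right-hand side is $\operatorname{Map}_{\EuScript{E}}(c, \operatorname{colim}_j d_j)\simeq\operatorname{colim}_j\operatorname{Map}_{\EuScript{E}}(c, d_j)$ precisely by the $\mathcal{F}$-compactness of $c$. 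These agree, giving fully faithfulness.

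For the equivalence criterion, the essential image of $L$ is a full subcategory of $\EuScript{E}$ containing $\EuScript{E}^{\mathcal{F}\operatorname{-cpt}}$ (as $L\circ y\simeq\iota$) and closed under $\mathcal{F}$-colimits (since $L$ preserves them and the fully faithfulness just established promotes preservation to reflection). Thus $L$ is essentially surjective if and only if $\EuScript{E}$ coincides with the smallest such full subcategory, i.e.\ if and only if $\EuScript{E}$ is generated under $\mathcal{F}$-colimits by $\EuScript{E}^{\mathcal{F}\operatorname{-cpt}}$. Conversely, if $L$ is an equivalence, then the generation of $\operatorname{PSh}^{\mathcal{F}}(\EuScript{E}^{\mathcal{F}\operatorname{-cpt}})$ by representables transports along $L$ to the corresponding generation of $\EuScript{E}$ by $\EuScript{E}^{\mathcal{F}\operatorname{-cpt}}$.

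The main obstacle is the fully faithfulness argument: its cleanliness depends on (i) $\mathcal{F}$-colimits in $\operatorname{PSh}^{\mathcal{F}}(\EuScript{E}^{\mathcal{F}\operatorname{-cpt}})$ being computed pointwise in the ambient presheaf $\infty$-category so that Yoneda produces a manageable formula, and (ii) the fact that \emph{every} object of $\operatorname{PSh}^{\mathcal{F}}(\EuScript{E}^{\mathcal{F}\operatorname{-cpt}})$ genuinely arises as a (possibly iterated) $\mathcal{F}$-colimit of representables, for which one must pass to the filtering closure of $\mathcal{F}$ as in \cite[\S 5]{Rez22}. Once these structural facts are cleanly in place, the remaining comparison is just the two-variable limit/colimit interchange allowed by $\mathcal{F}$-compactness.
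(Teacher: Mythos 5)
Your proposal is correct, and it is in substance the argument that the paper delegates entirely to the reference: the paper's own proof is two sentences, observing that the canonical functor is the identity on $\EuScript{E}^{\mathcal{F}\operatorname{-cpt}}$ and is computed by $\overline{\mathcal{F}}$-colimits in $\EuScript{E}$ \cite[\S 3.3]{Rez22}, and then citing \cite[\S 9.2]{Rez22} for full faithfulness and the generation criterion. What you have written is the standard double-reduction that lives inside that citation: both $\operatorname{Map}(-,Y)$ and $\operatorname{Map}_{\EuScript{E}}(L(-),LY)$ carry $\mathcal{F}$-colimits in the first variable to limits and agree on representables, and then in the second variable one compares two $\mathcal{F}$-colimit-preserving functors out of $\operatorname{PSh}^{\mathcal{F}}(\EuScript{E}^{\mathcal{F}\operatorname{-cpt}})$ that agree on representables. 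The one point where care is genuinely required is the one you flag yourself: objects of $\operatorname{PSh}^{\mathcal{F}}$ are only \emph{iterated} $\mathcal{F}$-colimits of representables, so the displayed single-colimit formulas $Y(c)\simeq\operatorname{colim}_j\operatorname{Map}(c,d_j)$ and $\operatorname{Map}_{\EuScript{E}}(c,\operatorname{colim}_j d_j)\simeq\operatorname{colim}_j\operatorname{Map}_{\EuScript{E}}(c,d_j)$ should either be read as shorthand for a transfinite induction over the construction of $\operatorname{PSh}^{\mathcal{F}}$, or one should invoke the filtering closure $\overline{\mathcal{F}}$ together with the fact (from \cite[\S 5]{Rez22}) that an $\mathcal{F}$-compact object is automatically $\overline{\mathcal{F}}$-compact; either way the step closes. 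The essential-image argument is also fine: full faithfulness ensures that any $\mathcal{F}$-diagram in the essential image lifts, so the image is closed under $\mathcal{F}$-colimits, which gives both directions of the equivalence criterion. In short, your proof is a correct, self-contained expansion of what the paper outsources to Rezk.
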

\begin{proof}
	 The canonical functor $\operatorname{PSh}^{\mathcal{F}}(\EuScript{E}^{\mathcal{F}\operatorname{-cpt}})\to\EuScript{E}$ is identity on $\EuScript{E}^{\mathcal{F}\operatorname{-cpt}}$ and in general is given by taking $\overline{\mathcal{F}}$-colimits in $\EuScript{E}$ \myparen{\cite[\S 3.3]{Rez22}}. The rest statement follows from \cite[\S 9.2]{Rez22}.
\end{proof}
\begin{lemma}[Colimit diagrams are stable under retracts]\label{colretr}
	Let $\EuScript{E}$ be an $\infty$-category and let $u\colon I^{\triangleright}\to\EuScript{E}$ be a colimit diagram. If $v\colon I^{\triangleright}\to\EuScript{E}$ is a diagram for which there exist edges $\alpha\colon u\to v, \beta\colon v\to u$ in $\operatorname{Fun}(I^{\triangleright}, \EuScript{E})$ with $\alpha\circ\beta\simeq\operatorname{id}_v$. Then $v$ is also a colimit diagram.
\end{lemma}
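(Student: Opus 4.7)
The plan is to reduce to a statement about limit diagrams in $\EuScript{S}$ and then invoke closure of equivalences under retracts. A diagram $w\colon I^{\triangleright}\to\EuScript{E}$ is a colimit diagram if and only if, for every object $e\in\EuScript{E}$, the composite
\[
w^*_e:=\operatorname{Map}_{\EuScript{E}}(w(-),e)\colon (I^{\triangleright})^{\rm op}\longrightarrow\EuScript{S}
\]
is a limit diagram (this is the Yoneda-style characterisation of colimits already used repeatedly in the paper). So fix $e\in\EuScript{E}$; it suffices to show that $v^*_e$ is a limit diagram in $\EuScript{S}$, knowing that $u^*_e$ is.

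First I would record the retract data on the mapping-space side. Applying $\operatorname{Map}_{\EuScript{E}}(-,e)$ to the edges $\alpha\colon u\to v$ and $\beta\colon v\to u$ in $\operatorname{Fun}(I^{\triangleright},\EuScript{E})$ produces edges $\alpha^*\colon v^*_e\to u^*_e$ and $\beta^*\colon u^*_e\to v^*_e$ in $\operatorname{Fun}((I^{\triangleright})^{\rm op},\EuScript{S})$, and the hypothesis $\alpha\circ\beta\simeq\operatorname{id}_v$ yields $\beta^*\circ\alpha^*\simeq\operatorname{id}_{v^*_e}$. In other words, $v^*_e$ sits as a retract of $u^*_e$ in the functor $\infty$-category $\operatorname{Fun}((I^{\triangleright})^{\rm op},\EuScript{S})$, compatibly at every position of the cone.

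The next step is to assemble the comparison-to-limit maps into a naturally defined morphism of arrows. Write $\bot$ for the cone point of $(I^{\triangleright})^{\rm op}$ (initial) and let $\phi_{w}\colon w^*_e(\bot)\to\operatorname{lim}(w^*_e|_{I^{\rm op}})$ be the canonical comparison, functorial in $w^*_e$. Naturality makes $\phi_{v^*_e}$ a retract of $\phi_{u^*_e}$ in the arrow $\infty$-category $\operatorname{Fun}(\Delta^1,\EuScript{S})$, with section induced by $\alpha^*$ and retraction by $\beta^*$. Since $u$ is a colimit diagram, $\phi_{u^*_e}$ is an equivalence; equivalences are closed under retracts in any $\infty$-category (this is a formal consequence of passing to the homotopy category, where retracts of isomorphisms are isomorphisms, together with the fact that $\operatorname{Fun}(\Delta^1,\EuScript{S})$ has the same equivalences as its homotopy category detects). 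Hence $\phi_{v^*_e}$ is an equivalence, so $v^*_e$ is a limit diagram, and $v$ is a colimit diagram.

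The only delicate point is the last one: one must be careful that the naturality of $\phi_w$ in $w$ really lifts the retract datum on diagrams to a retract datum on arrows. This is built into the functoriality of the restriction functor $\operatorname{Fun}((I^{\triangleright})^{\rm op},\EuScript{S})\to\operatorname{Fun}(\Delta^1,\EuScript{S})$, $F\mapsto(F(\bot)\to\operatorname{lim}F|_{I^{\rm op}})$; once this functor is in place, everything else is formal, and in particular no smallness or completeness hypothesis on $\EuScript{E}$ is needed beyond what is used to form $\operatorname{Map}_{\EuScript{E}}(-,e)$.
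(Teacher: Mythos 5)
Your proposal is correct and follows essentially the same route as the paper: test colimits by applying $\operatorname{Map}_{\EuScript{E}}(-,e)$, observe that the comparison-to-limit map for $v$ becomes a retract of the one for $u$ (which is an equivalence), and conclude since equivalences are closed under retracts. The paper packages this as a single $2\times 3$ commutative diagram in $\EuScript{S}$ whose outer horizontal composites are identities, but the content is identical.
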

\begin{proof}
	We denote the cone objects of $u, v$ by $a, b\in\EuScript{E}$. Since colimits in $\EuScript{E}$, exist or not, can be tested on applying $\operatorname{Map}_{\EuScript{E}}(-, e)$ for objects $e\in\EuScript{E}$, we only need to show that the left vertical arrow in the following commutative diagram
	\[\begin{tikzcd}[column sep=3em]
		\operatorname{Map}_{\EuScript{E}}(b, e) \ar[r, "\alpha^*"] \ar[d]  &  \operatorname{Map}_{\EuScript{E}}(a, e) \ar[r, "\beta^*"] \ar[d]  &  \operatorname{Map}_{\EuScript{E}}(b, e)    \arrow[d]\\
		\operatorname{lim}\operatorname{Map}_{\EuScript{E}}(-, e)\circ v|_I   \ar[r, "\alpha^*"]   &  \operatorname{lim}\operatorname{Map}_{\EuScript{E}}(-, e)\circ u|_I   \ar[r, "\beta^*"]  &  \operatorname{lim}\operatorname{Map}_{\EuScript{E}}(-, e)\circ v|_I
	\end{tikzcd}\]
	in $\EuScript{S}$ is an isomorphism for every $e\in\EuScript{E}$. But this is clear, since it exhibits the left vertical arrow as a retract of the middle vertical arrow, the latter is an isomorphism in $\EuScript{S}$ by assumption, hence so is the former.
\end{proof}
\begin{lemma}\label{colim-in-cat-pts}
	Let $I$ be a small simplicial set, let $\EuScript{C}$ be a small $\infty$-category admitting all $I$-shaped colimits.
	\begin{enumerate}[label=\myparen{\arabic*}]
		\item If $X\in\operatorname{Fun}^{I^{\rm op}-\operatorname{lim}}(\EuScript{C}^{\rm op}, \EuScript{S})\subset\EuScript{P}(\EuScript{C})$ is an $I^{\rm op}$-limit preserving functor, then the $\infty$-category $\EuScript{C}_{/X}$ admits all $I$-shaped colimits and the projection $p\colon\EuScript{C}_{/X}\to\EuScript{C}$ preserves such colimits..
		\item Let $\EuScript{D}$ be another $\infty$-category and let $F\in\operatorname{Fun}^{I-\operatorname{colim}}(\EuScript{C}, \EuScript{D})$ be an $I$-colimit preserving functor, then for every object $d\in\EuScript{D}$, the $\infty$-category $\EuScript{C}_{/d}=\EuScript{C}\times_{\EuScript{D}}\EuScript{D}_{/d}$ admits all $I$-shaped colimits and the projection $q\colon\EuScript{C}_{/d}\to\EuScript{C}$ preserves such colimits.
	\end{enumerate}
\end{lemma}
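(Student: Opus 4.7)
The plan is to first reduce (2) to (1) and then use unstraightening to give a direct computation. For (2): the slice $\EuScript{D}_{/d}\to\EuScript{D}$ is the right fibration classified by $h_d=\operatorname{Map}_{\EuScript{D}}(-,d)\colon\EuScript{D}^{\rm op}\to\EuScript{S}$, and by the base-change compatibility of unstraightening in \Cref{unsteq}, the pullback $\EuScript{C}_{/d}=\EuScript{C}\times_{\EuScript{D}}\EuScript{D}_{/d}\to\EuScript{C}$ is the right fibration classified by $X:=h_d\circ F^{\rm op}\colon\EuScript{C}^{\rm op}\to\EuScript{S}$. Since $F$ preserves $I$-shaped colimits and representable functors $h_d$ send colimits in $\EuScript{D}$ to limits in $\EuScript{S}$, the presheaf $X$ is $I^{\rm op}$-limit preserving, so (1) applied to this $X$ yields (2).

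For (1), fix a diagram $\tilde{f}\colon I\to\EuScript{C}_{/X}$ lifting $f=p\circ\tilde{f}\colon I\to\EuScript{C}$. Objects of $\EuScript{C}_{/X}$ are pairs $(a,\alpha)$ with $a\in\EuScript{C}$ and $\alpha\in X(a)$; write $\tilde{f}(i)=(f(i),\alpha_i)$. Since $\EuScript{C}_{/X}\to\EuScript{C}$ is a right fibration (and every section of a right fibration is automatically cartesian), lifts of $f$ correspond to points of $\lim_{I^{\rm op}}X\circ f^{\rm op}$. Let $c=\operatorname{colim}_I f\in\EuScript{C}$, which exists by hypothesis on $\EuScript{C}$; the $I^{\rm op}$-limit preservation of $X$ supplies a canonical equivalence $X(c)\simeq\lim_{I^{\rm op}}X\circ f^{\rm op}$, and transporting the coherent family $(\alpha_i)_{i\in I}$ along it produces an element $\xi\in X(c)$, hence a candidate cone apex $(c,\xi)\in\EuScript{C}_{/X}$.

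To verify universality, recall the mapping space formula for the right fibration: for $(a,\alpha),(b,\beta)\in\EuScript{C}_{/X}$,
\[\operatorname{Map}_{\EuScript{C}_{/X}}((a,\alpha),(b,\beta))\simeq\operatorname{Map}_{\EuScript{C}}(a,b)\times_{X(a)}\{\alpha\},\]
the structure map being $g\mapsto X(g^{\rm op})(\beta)$. Fixing a test object $(e,\eta)\in\EuScript{C}_{/X}$ and commuting limits with pullbacks yields
\[\lim_{i\in I^{\rm op}}\operatorname{Map}_{\EuScript{C}_{/X}}(\tilde{f}(i),(e,\eta))\simeq\operatorname{Map}_{\EuScript{C}}(c,e)\times_{X(c)}\{\xi\}\simeq\operatorname{Map}_{\EuScript{C}_{/X}}((c,\xi),(e,\eta)),\]
where the first equivalence combines $c=\operatorname{colim}f$ in $\EuScript{C}$ with the $I^{\rm op}$-limit preservation of $X$. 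This exhibits $(c,\xi)$ as the colimit, and by construction $p$ sends it to $c$.

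The main obstacle is upgrading the apex $(c,\xi)$ to a genuine colimit diagram $\tilde{f}^\triangleright\colon I^\triangleright\to\EuScript{C}_{/X}$ lying over the colimit diagram $f^\triangleright$ in $\EuScript{C}$; equivalently, extending the section of $\EuScript{C}_{/X}\times_{\EuScript{C}}I\to I$ determined by $\tilde{f}$ to a section of the pullback $\EuScript{C}_{/X}\times_{\EuScript{C}}I^\triangleright\to I^\triangleright$. Unwinding the right-fibration picture, these two section spaces are $\lim_{I^{\rm op}}X\circ f^{\rm op}$ and $\lim_{(I^\triangleright)^{\rm op}}X\circ(f^\triangleright)^{\rm op}$ respectively, and the $I^{\rm op}$-limit preservation of $X$ is exactly what makes the restriction map between them an equivalence; so such an extension exists essentially uniquely. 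The preservation of the colimit by $p$ is then immediate from the construction.
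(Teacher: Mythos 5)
Your proposal is correct and follows essentially the same route as the paper: reduce (2) to (1) by identifying $\EuScript{C}_{/d}\to\EuScript{C}$ as the right fibration classified by $h_d\circ F^{\rm op}$ via base-change compatibility of unstraightening, construct the cone apex from the equivalence $X(c)\simeq\lim_{I^{\rm op}}X\circ f^{\rm op}$, and verify universality using the fibre description of mapping spaces in $\EuScript{C}_{/X}$ together with the fact that forming fibres commutes with limits. Your closing paragraph on extending the apex to an actual cone over $f^{\triangleright}$ via section spaces of the right fibration is a welcome bit of extra care on a point the paper passes over quickly.
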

\begin{proof}
	\begin{enumerate}[label=(\arabic*)]
		\item Note first that the functor $X\colon\EuScript{C}^{\rm op}\to\EuScript{S}$ classifies the right fibration $\EuScript{C}_{/X}\to\EuScript{C}$. Let $v\colon I\to\EuScript{C}_{/X}$ be a diagram and let $\bar{u}\colon I^{\triangleright}\to\EuScript{C}$ be a colimit diagram extending $u=p\circ v\colon I\to\EuScript{C}, i\mapsto a_i$, with cone object $a\in\EuScript{C}$ so that $v(i)\in X(a_i)$.
		
		By assumption, $X(a)\xrightarrow{\sim}\operatorname{lim}_{i\in I^{\rm op}}X(a_i)\in\EuScript{S}$, so these $v(i)\in X(a_i)$ specify an object $v(\infty)\in X(a)$; together with $v\colon I\to\EuScript{C}_{/X}$, it defines a diagram $\bar{v}\colon I^{\triangleright}\to\EuScript{C}_{/X}$. We need to prove that this is a colimit diagram in $\EuScript{C}_{/X}$, for which we only need to prove that by mapping it to an arbitrary object in $\EuScript{C}_{/X}$ yields a limit diagram.
		
		This is the case since, for $\alpha\in X(c), \alpha'\in X(c')$, the mapping space $\operatorname{Map}_{\EuScript{C}_{/X}}(\alpha, \alpha')$ is the fibre of $\alpha'_*\colon\operatorname{Map}_{\EuScript{C}}(c, c')\to X(c)$ over the object $\alpha$, and forming fibres commutes with limits.
		\item Since the functor $h_d\colon\EuScript{D}^{\rm op}\to\EuScript{S}$ classifies the right fibration $\EuScript{D}_{/d}\to\EuScript{D}$ and the unstraightening equivalences are compatible with base change, the functor $h_d\circ F^{\rm op}\colon\EuScript{C}^{\rm op}\to\EuScript{S}, c\mapsto\operatorname{Map}_{\EuScript{D}}(F(c), d)$ classifies the right fibration $q\colon\EuScript{C}_{/d}\to\EuScript{C}$. But by design, unstraightening of a functor $X\colon\EuScript{C}^{\rm op}\to\EuScript{S}$ is the projection $\EuScript{C}_{/X}\to\EuScript{C}$, so  $\EuScript{C}_{/X}\simeq\EuScript{C}_{/d}$. And by assumption, the functor $X:=h_d\circ F^{\rm op}$ preserves $I$-shaped limits. Thus (2) follows from (1).
	\end{enumerate}
\end{proof}
For a class $\mathcal{U}\subset\operatorname{Cat}_{\infty}$ of small $\infty$-categories, we write $\mathcal{U}^{\rm op}=\{U^{\rm op}\colon U\in\mathcal{U}\}$ and define $\operatorname{Filt}_{\mathcal{U}}\subset\operatorname{Cat}_{\infty}$ to be the class of all small $\infty$-categories $J$ for which the functor $\operatorname{colim}\colon\operatorname{Fun}(J, \EuScript{S})\to\EuScript{S}$ preserves $\mathcal{U}^{\rm op}$-limits in $\EuScript{S}$. It is a filtering class (\cite[\S 10.5]{Rez22}).
\begin{prop}
	Let $\mathcal{U}\subset\operatorname{Cat}_{\infty}$ be a class of small $\infty$-categories, let $\mathcal{F}:= \operatorname{Filt}_{\mathcal{U}}$.
	\begin{enumerate}[label=\myparen{\arabic*}]
		\item For an $\infty$-category $\EuScript{S}$ admitting $\mathcal{F}$-colimits, its subcategory $\EuScript{E}^{\mathcal{F}\operatorname{-cpt}}$ of $\mathcal{F}$-compact objects is closed under retracts and under $\mathcal{U}$-colimits which exist in $\EuScript{E}$.
		\item Let $\EuScript{C}$ be an $\infty$-category. If $X\in\EuScript{P}(\EuScript{C})$ is a retract of the colimit of a diagram of the form $I\to\EuScript{C}\xrightarrow{h}\EuScript{P}(\EuScript{C})$ for some $I\in\mathcal{U}$, then $X\in\EuScript{P}(\EuScript{C})^{\mathcal{F}\operatorname{-cpt}}$.
	\end{enumerate}
\end{prop}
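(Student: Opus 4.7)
The plan is to reduce part (2) to part (1), so the real content lies in (1). For (2), the key observation is that every representable $h_c \in \EuScript{P}(\EuScript{C})$ is $\mathcal{F}$-compact for any class $\mathcal{F}$ whatsoever, because $\operatorname{Map}_{\EuScript{P}(\EuScript{C})}(h_c, -) \simeq \operatorname{ev}_c$ preserves all colimits (they are computed pointwise in $\EuScript{P}(\EuScript{C})$). Granted (1), a colimit of a diagram of the form $I \to \EuScript{C} \xrightarrow{h} \EuScript{P}(\EuScript{C})$ with $I \in \mathcal{U}$ lies in $\EuScript{P}(\EuScript{C})^{\mathcal{F}\operatorname{-cpt}}$ by closure under $\mathcal{U}$-colimits, and any retract of it lies in $\EuScript{P}(\EuScript{C})^{\mathcal{F}\operatorname{-cpt}}$ by closure under retracts.

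For the retract closure in (1), suppose $e'$ is a retract of an $\mathcal{F}$-compact object $e \in \EuScript{E}$, so there are edges $e' \xrightarrow{\iota} e \xrightarrow{\pi} e'$ with $\pi \circ \iota \simeq \operatorname{id}_{e'}$. Functoriality of the Yoneda embedding makes $\operatorname{Map}_{\EuScript{E}}(e', -)$ a retract of $\operatorname{Map}_{\EuScript{E}}(e, -)$ in $\operatorname{Fun}(\EuScript{E}, \EuScript{S})$ via $\pi^*$ and $\iota^*$. For any $\mathcal{F}$-colimit diagram $u\colon J^{\triangleright} \to \EuScript{E}$, postcomposition with $\operatorname{Map}_{\EuScript{E}}(e, -)$ yields a colimit diagram in $\EuScript{S}$ (by $\mathcal{F}$-compactness of $e$), and postcomposition with $\operatorname{Map}_{\EuScript{E}}(e', -)$ yields a diagram in $\EuScript{S}$ that is a retract of this one in $\operatorname{Fun}(J^{\triangleright}, \EuScript{S})$. \Cref{colretr} then shows that it too is a colimit diagram, proving $e'$ is $\mathcal{F}$-compact.

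For closure under $\mathcal{U}$-colimits, let $v \colon I \to \EuScript{E}^{\mathcal{F}\operatorname{-cpt}}$ with $I \in \mathcal{U}$ admit a colimit $e \in \EuScript{E}$, and let $u \colon J^{\triangleright} \to \EuScript{E}$ be any $\mathcal{F}$-colimit diagram with cone object $u_\infty$ and $J \in \mathcal{F}$. Using the universal property of $e$ followed by $\mathcal{F}$-compactness of each $v(i)$,
\[\operatorname{Map}_{\EuScript{E}}(e, u_\infty) \simeq \operatorname{lim}_{i \in I^{\rm op}}\operatorname{Map}_{\EuScript{E}}(v(i), u_\infty) \simeq \operatorname{lim}_{i \in I^{\rm op}}\operatorname{colim}_{j \in J}\operatorname{Map}_{\EuScript{E}}(v(i), u(j)).\]
On the other hand, $\operatorname{colim}_{j \in J}\operatorname{Map}_{\EuScript{E}}(e, u(j)) \simeq \operatorname{colim}_{j \in J}\operatorname{lim}_{i \in I^{\rm op}}\operatorname{Map}_{\EuScript{E}}(v(i), u(j))$. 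The required agreement of these two expressions is exactly the commutation of $\operatorname{colim}_J$ with the $I^{\rm op}$-indexed limit applied to the bifunctor $(j, i) \mapsto \operatorname{Map}_{\EuScript{E}}(v(i), u(j))$, viewed as an $I^{\rm op}$-diagram in $\operatorname{Fun}(J, \EuScript{S})$; since $I \in \mathcal{U}$ means $I^{\rm op} \in \mathcal{U}^{\rm op}$, this is precisely the defining property $J \in \operatorname{Filt}_{\mathcal{U}} = \mathcal{F}$.

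There is no genuine obstacle in this argument: both halves of (1) unfold directly from properties already packaged into the setup --- retract stability of colimit diagrams from \Cref{colretr}, and the limit--colimit interchange from the very definition of $\operatorname{Filt}_{\mathcal{U}}$. The only mild point requiring care is verifying that a pointwise retract $\operatorname{Map}_{\EuScript{E}}(e', -) \to \operatorname{Map}_{\EuScript{E}}(e, -) \to \operatorname{Map}_{\EuScript{E}}(e', -)$ assembles into a retract at the level of functors $\EuScript{E} \to \EuScript{S}$ so that \Cref{colretr} applies after whiskering with $u \colon J^{\triangleright} \to \EuScript{E}$; this is immediate from functoriality of Yoneda.
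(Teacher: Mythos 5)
Your proposal is correct and follows essentially the same route as the paper: retract closure via \Cref{colretr} applied to the retract of mapping-space functors, closure under $\mathcal{U}$-colimits via the limit--colimit interchange built into the definition of $\operatorname{Filt}_{\mathcal{U}}$, and (2) by noting that representables are completely compact and then invoking (1). You merely spell out the interchange computation that the paper leaves implicit.
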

\begin{proof}
\begin{enumerate}[label=(\arabic*)]
	\item Since by definition, $\mathcal{F}$-colimits commute with $\mathcal{U}^{\rm op}$-limits in $\EuScript{S}$, we easily find that $\EuScript{E}^{\mathcal{F}\operatorname{-cpt}}$ is closed under $\mathcal{U}$-colimits which exist in $\EuScript{E}$.
	\item Just note that objects in the image of the Yoneda embedding are  \emph{completely compact} (\cite[Definition 5.1.6.2]{HTT}), or \emph{atomic} (\cite[Definition 2.4]{COJ}) and by (1), $\EuScript{P}(\EuScript{C})^{\mathcal{F}\operatorname{-cpt}}$ is closed under retracts and under $\mathcal{U}$-colimits in $\EuScript{P}(\EuScript{C})$.
\end{enumerate}
\end{proof}
\begin{remark}
	It might be interesting to see if the converse of (2) holds. One possible way is to show that for $X\in\EuScript{P}(\EuScript{C})^{\mathcal{F}\operatorname{-cpt}}$, there exists a right cofinal functor $I\to\EuScript{C}_{/X}$ with $I\in\mathcal{U}$.
\end{remark}
\begin{theorem}\label{colim-yon}
	Let $\mathcal{U}\subset\operatorname{Cat}_{\infty}$ be a class of small $\infty$-categories, let $\mathcal{F}:= \operatorname{Filt}_{\mathcal{U}}$.
	\begin{enumerate}[label=\myparen{\arabic*}]
		\item For a small $\infty$-category $\EuScript{C}$, any object $X\in\operatorname{PSh}^{\mathcal{F}}(\EuScript{C})$, viewed as a functor $\EuScript{C}^{\rm op}\to\EuScript{S}$, preserves $\mathcal{U}^{\rm op}$-limits which exist in $\EuScript{C}^{\rm op}$.
		\item For a small $\infty$-category $\EuScript{C}$, the Yoneda functor $h\colon\EuScript{C}\hookrightarrow\operatorname{PSh}^{\mathcal{F}}(\EuScript{C})$ preserves $\mathcal{U}$-colimits which exist in $\EuScript{C}$.
		\item Assume that any small $\infty$-category admitting $\mathcal{U}$-colimits lies in $\mathcal{F}$, then for any small $\infty$-category $\EuScript{C}$ admitting $\mathcal{U}$-colimits, we have
		\[\operatorname{PSh}^{\mathcal{F}}(\EuScript{C})=\operatorname{Fun}^{\rm \mathcal{U}^{\rm op}-lim}(\EuScript{C}^{\rm op}, \EuScript{S})\subset\EuScript{P}(\EuScript{C}).\]
		Moreover, in this case, the $\infty$-category  $\operatorname{PSh}^{\mathcal{F}}(\EuScript{C})$ is closed under limits in $\EuScript{P}(\EuScript{C})$.
		\item 
		\newcommand*\cocolon{%
			\nobreak
			\mskip6mu plus1mu
			\mathpunct{}%
			\nonscript
			\mkern-\thinmuskip
			{:}
			\mskip2mu
			\relax
		}
		Let $\EuScript{C}$ be a small $\infty$-category admitting $\mathcal{U}$-colimits for which $\operatorname{PSh}^{\mathcal{F}}(\EuScript{C})=\operatorname{Fun}^{\rm \mathcal{U}^{\rm op}-lim}(\EuScript{C}^{\rm op}, \EuScript{S})\subset\EuScript{P}(\EuScript{C})$ and let $F\colon\EuScript{P}(\EuScript{C})\rightleftarrows\EuScript{D}\cocolon G$ be an adjunction. Then $G$ factors through $\operatorname{PSh}^{\mathcal{F}}(\EuScript{C})$ if and only if $F\circ h\in\operatorname{Fun}^{\rm \mathcal{U}-colim}(\EuScript{C}, \EuScript{D})$, where $h\colon\EuScript{C}\hookrightarrow\EuScript{P}(\EuScript{C})$ is the Yoneda functor.
		
		If this is the case, we obtain a restricted adjunction
		\[F\colon\operatorname{PSh}^{\mathcal{F}}(\EuScript{C})\rightleftarrows\EuScript{D}\cocolon G.\]
	\end{enumerate}
\end{theorem}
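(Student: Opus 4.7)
The plan is to prove the four parts in sequence, each building on the previous.

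For (1), I would identify $\operatorname{Fun}^{\mathcal{U}^{\rm op}-\lim}(\EuScript{C}^{\rm op}, \EuScript{S}) \subset \EuScript{P}(\EuScript{C})$ as a full subcategory that contains representables and is closed under $\mathcal{F}$-colimits. Every representable $h_c$ preserves all limits that exist in $\EuScript{C}^{\rm op}$, by the defining universal property of limits in an $\infty$-category. Closure under $\mathcal{F}$-colimits holds because colimits in $\EuScript{P}(\EuScript{C})$ are computed pointwise and, by the very definition $\mathcal{F} = \operatorname{Filt}_{\mathcal{U}}$, $\mathcal{F}$-colimits in $\EuScript{S}$ commute with $\mathcal{U}^{\rm op}$-limits. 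Since $\operatorname{PSh}^{\mathcal{F}}(\EuScript{C})$ is by construction the smallest such subcategory, (1) follows. Part (2) is then immediate: given $u \colon I \to \EuScript{C}$ with $I \in \mathcal{U}$ and colimit $a \in \EuScript{C}$, testing against any $X \in \operatorname{PSh}^{\mathcal{F}}(\EuScript{C})$ gives
\[\operatorname{Map}(h(a), X) \simeq X(a) \simeq \lim_{i \in I^{\rm op}} X(u(i)) \simeq \operatorname{Map}(\operatorname{colim}_I h \circ u, X)\]
by Yoneda and (1), identifying $h(a)$ with $\operatorname{colim}_I h \circ u$ in $\operatorname{PSh}^{\mathcal{F}}(\EuScript{C})$.

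For (3), one containment is exactly (1). For the reverse, I would take $X \in \operatorname{Fun}^{\mathcal{U}^{\rm op}-\lim}(\EuScript{C}^{\rm op}, \EuScript{S})$ and use the canonical expression $X \simeq \operatorname{colim}_{\EuScript{C}_{/X}} h$ as a colimit of representables in $\EuScript{P}(\EuScript{C})$. The key input is \Cref{colim-in-cat-pts}(1), which (applied for each $I \in \mathcal{U}$) shows that $\EuScript{C}_{/X}$ admits $\mathcal{U}$-colimits; the standing hypothesis then places $\EuScript{C}_{/X}$ in $\mathcal{F}$, so $X$ is exhibited as a $\mathcal{F}$-colimit of representables and lies in $\operatorname{PSh}^{\mathcal{F}}(\EuScript{C})$. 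Closure under limits in $\EuScript{P}(\EuScript{C})$ then follows from the pointwise formula: a pointwise limit of $\mathcal{U}^{\rm op}$-continuous functors remains $\mathcal{U}^{\rm op}$-continuous since limits commute with limits in $\EuScript{S}$. This is where I expect the bulk of the work; the substantive ingredient is the $\mathcal{U}$-colimit structure on $\EuScript{C}_{/X}$, and provided \Cref{colim-in-cat-pts}(1) is in hand the argument is clean.

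Finally, for (4), the Yoneda lemma combined with the adjunction gives $(Gd)(c) \simeq \operatorname{Map}_{\EuScript{P}(\EuScript{C})}(h_c, Gd) \simeq \operatorname{Map}_{\EuScript{D}}(Fh_c, d)$, so as a functor $\EuScript{C}^{\rm op} \to \EuScript{S}$,
\[Gd \simeq \operatorname{Map}_{\EuScript{D}}(-, d) \circ (F \circ h)^{\rm op}.\]
By (3), $G$ factors through $\operatorname{PSh}^{\mathcal{F}}(\EuScript{C})$ iff each $Gd$ preserves $\mathcal{U}^{\rm op}$-limits, which amounts to requiring that, for every $\mathcal{U}$-colimit diagram in $\EuScript{C}$ and every $d \in \EuScript{D}$, its image under $F \circ h$ is carried to a limit by $\operatorname{Map}_{\EuScript{D}}(-, d)$. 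Since colimits in $\EuScript{D}$ are detected by mapping into all objects, this is precisely the condition that $F \circ h$ preserve $\mathcal{U}$-colimits. When this holds, full faithfulness of $\operatorname{PSh}^{\mathcal{F}}(\EuScript{C}) \hookrightarrow \EuScript{P}(\EuScript{C})$ yields $\operatorname{Map}_{\operatorname{PSh}^{\mathcal{F}}(\EuScript{C})}(X, Gd) \simeq \operatorname{Map}_{\EuScript{D}}(FX, d)$ for $X \in \operatorname{PSh}^{\mathcal{F}}(\EuScript{C})$, so the restriction $F|_{\operatorname{PSh}^{\mathcal{F}}(\EuScript{C})} \dashv G$ is the asserted adjunction.
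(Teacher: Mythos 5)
Your proposal is correct and follows essentially the same route as the paper: part (1) via commutation of $\mathcal{F}$-colimits with $\mathcal{U}^{\rm op}$-limits in $\EuScript{S}$ applied to a presentation of $X$ by representables, (2) by Yoneda reduction to (1), (3) via \Cref{colim-in-cat-pts} giving $\EuScript{C}_{/X}\in\mathcal{F}$, and (4) by the identification $G(d)\simeq\operatorname{Map}_{\EuScript{D}}(-,d)\circ(F\circ h)^{\rm op}$. The only cosmetic difference is that in (1) you argue by minimality of the closure under $\mathcal{F}$-colimits rather than by writing $X$ explicitly as an $\mathcal{F}$-indexed colimit of representables; these are interchangeable.
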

\begin{proof}
	\begin{enumerate}[label=(\arabic*)]
		\item Let $u\colon(I^{\rm op})^{\triangleleft}\to\EuScript{C}^{\rm op}$ be a limit diagram with $I\in\mathcal{U}$. By writing $X$ as a colimit of representables over an $\infty$-category in $\mathcal{F}$ and using that $\mathcal{F}$-colimits commute with $\mathcal{U}^{\rm op}$-limits in $\EuScript{S}$, we easily find that $X\circ u\colon(I^{\rm op})^{\triangleleft}\to\EuScript{S}$ is also a limit diagram.
		\item Let $v\colon I^{\triangleright}\to\EuScript{C}$ be a colimit diagram with $I\in\mathcal{U}$. By the Yoneda lemma, we only need to show that, for any $X\in\operatorname{PSh}^{\mathcal{F}}(\EuScript{C})$, the functor $L\circ(h\circ v)^{\rm op}\colon(I^{\rm op})^{\triangleleft}\to\widehat{\EuScript{S}}$ is a limit diagram, where $L=\operatorname{Map}_{\EuScript{P}(\EuScript{C})}(-, X)\colon\operatorname{PSh}^{\mathcal{F}}(\EuScript{C})^{\rm op}\to\widehat{\EuScript{S}}$ is the functor represented by $X$. Again by the Yoneda lemma, we have $L\circ h^{\rm op}\simeq X\in\operatorname{PSh}^{\mathcal{F}}(\EuScript{C})$. The result now follows from (1).
		\item By (1), $\operatorname{PSh}^{\mathcal{F}}(\EuScript{C})\subset\operatorname{Fun}^{\rm \mathcal{U}^{\rm op}-lim}(\EuScript{C}^{\rm op}, \EuScript{S})$. Conversely, for any $X\in\operatorname{Fun}^{\rm \mathcal{U}^{\rm op}-lim}(\EuScript{C}^{\rm op}, \EuScript{S})$, the $\infty$-category $\EuScript{C}_{/X}$ admits all $\mathcal{U}$-colimits by the previous proposition, so $\EuScript{C}_{/X}\in\mathcal{F}$ by assumption, and hence $X\in\operatorname{PSh}^{\mathcal{F}}(\EuScript{C})$ (\cite[\S 4.2]{Rez22}).
		The last statement holds as taking limits commutes with each other.
		\item Since colimits in $\EuScript{D}$, exist or not, can be tested on applying $h'_d=\operatorname{Map}_{\EuScript{D}}(-, d)$ for objects $d\in\EuScript{D}$, $F\circ h\in\operatorname{Fun}^{\rm \mathcal{U}-colim}(\EuScript{C}, \EuScript{D})$ if and only if, for every object $d\in\EuScript{D}$, we have $h'_d\circ(F\circ h)^{\rm op}\in\operatorname{Fun}^{\rm \mathcal{U}^{\rm op}-lim}(\EuScript{C}^{\rm op}, \EuScript{S})=\operatorname{PSh}^{\mathcal{F}}(\EuScript{C})$. This is the case if and only if $G$ factors through $\operatorname{PSh}^{\mathcal{F}}(\EuScript{C})$, since by adjunction and the Yoneda lemma, $h'_d\circ(F\circ h)^{\rm op}\simeq G(d)\in\EuScript{P}(\EuScript{C})$.
		\end{enumerate}
\end{proof}
\begin{remark}
	\begin{enumerate}[label=(\arabic*)]
		\item Though the inclusion $\operatorname{PSh}^{\mathcal{F}}(\EuScript{C})\hookrightarrow\EuScript{P}(\EuScript{C})$ preserves all $\mathcal{F}$-colimits, it does not preserve $\mathcal{U}$-colimits in general: for $I\in\mathcal{U}$ and a diagram $v\colon I\to\EuScript{C}$, the colimits of $I\xrightarrow{v}\EuScript{C}\xhookrightarrow{h}\operatorname{PSh}^{\mathcal{F}}(\EuScript{C})$ and of $I\xrightarrow{v}\EuScript{C}\xhookrightarrow{h}\EuScript{P}(\EuScript{C})$ are not equivalent in $\EuScript{P}(\EuScript{C})$ in general.
		\item The assumption in statement (3) is satisfied in the situations described in the example below (and with $\mathcal{U}^{\rm op}=\mathcal{U}$). There should be certain conditions on the class $\mathcal{U}$, under which the assumption in statement (3) is automatically fulfilled. I will not explore such conditions here.
	\end{enumerate}
\end{remark}
\begin{example}
	Here are some important special cases of the previous discussion in the $\infty$-categorical setting, which are commonly used in the $1$-categorical setting. We fix an $\infty$-category $\EuScript{C}$.
	\begin{enumerate}[label=(\arabic*)]
		\item Take $\mathcal{F}=\operatorname{Filt}_{\operatorname{Sm}_{\kappa}}$ to be the collection of all $\kappa$-filtered $\infty$-categories for a regular cardinal $\kappa$, then $\operatorname{PSh}^{\mathcal{F}}(\EuScript{C})=\operatorname{Ind}_{\kappa}(\EuScript{C})$ (\cite[\S 10.7]{Rez22}).
		
		If $\EuScript{C}$ admits all $\kappa$-small colimits, then ${\rm Ind}_{\kappa}(\EuScript{C})=\operatorname{Fun}^{\rm \kappa-lim}(\EuScript{C}^{\rm op}, \EuScript{S})\subset\EuScript{P}(\EuScript{C})$ consists of those functors preserve $\kappa$-small limits. If $\EuScript{C}$ admits all finite colimits, then ${\rm Ind}(\EuScript{C})=\operatorname{Fun}^{\rm lex}(\EuScript{C}^{\rm op}, \EuScript{S})\subset\EuScript{P}(\EuScript{C})$ consists of all left exact functors \myparen{those preserve finite limits}.
		
		For an $\infty$-category $\EuScript{E}$ having $\kappa$-filtered colimits, we denote $\EuScript{E}^{\mathcal{F}\operatorname{-cpt}}=\EuScript{E}^{\kappa}$, and call it the full subcategory of \emph{$\kappa$-compact} objects in $\EuScript{E}$. It is closed under $\kappa$-small colimits which exist in $\EuScript{E}$ (since $\kappa$-filtered colimits commute with $\kappa$-small limits in $\EuScript{S}$). The inclusion functor $\EuScript{E}^{\kappa}\hookrightarrow\EuScript{E}$ corresponds to a fully faithful $\kappa$-filtered colimit preserving functor $\operatorname{Ind}_{\kappa}(\EuScript{E}^{\kappa})\to\EuScript{E}$ which is identity on $\EuScript{E}^{\kappa}$ and in general is given by taking $\kappa$-filtered colimits in $\EuScript{E}$.
		
		In the case $\kappa=\omega$, for an $\infty$-category $\EuScript{E}$ which has ($\omega$-)filtered colimits, we have $\EuScript{E}^{\mathcal{F}\operatorname{-cpt}}=\EuScript{E}^{\rm fp}=\EuScript{E}^{\omega}$. It is closed under finite colimits which exist in $\EuScript{E}$; its objects are called \emph{compact} or \emph{of finite presentation}. The inclusion functor $\EuScript{E}^{\rm fp}\hookrightarrow\EuScript{E}$ corresponds to a fully faithful filtered colimit preserving functor $\operatorname{Ind}(\EuScript{E}^{\rm fp})\to\EuScript{E}$ which is identity on $\EuScript{E}^{\rm fp}$ and in general is given by taking filtered colimits in $\EuScript{E}$.
		
		\vspace{2mm}
		
		If we only assume that $\EuScript{E}$ has filtered colimits of monomorphisms (i.e. all transition morphisms are monomorphisms), we say that an object $e\in\EuScript{E}$ is \emph{finitely generated} or \emph{of finite type} if the functor $\operatorname{Map}_{\EuScript{E}}(e, -)\colon\EuScript{E}\to\EuScript{S}$ commutes with filtered colimits of monomorphisms. The full subcategory of finitely generated objects in $\EuScript{E}$ is denoted by $\EuScript{E}^{\rm fg}$. It is closed under finite colimits which exist in $\EuScript{E}$.
		
		\vspace{2mm}
		
		\item Take $\mathcal{F}=\operatorname{Filt}_{\operatorname{Set}^{<\omega}}$ to be the collection of all small sifted $\infty$-categories, then $\operatorname{PSh}^{\mathcal{F}}(\EuScript{C})=\operatorname{sInd}(\EuScript{C})$, the \emph{sifted completion} of $\EuScript{C}$ (\cite[\S 10.8]{Rez22}). If $\EuScript{C}$ admits all finite coproducts, then $\operatorname{sInd}(\EuScript{C})=\EuScript{P}_{\Sigma}(\EuScript{C})=\operatorname{Fun}^{\times}(\EuScript{C}^{\rm op}, \EuScript{S})\subset\EuScript{P}(\EuScript{C})$ (see \Cref{colim-yon} (3) or \cite[Propositions 5.5.8.10 and 5.5.8.15]{HTT}).
		
		For an $\infty$-category $\EuScript{E}$ having small sifted colimits, we denote $\EuScript{E}^{\mathcal{F}\operatorname{-cpt}}=\EuScript{E}^{\rm sfp}$, it is closed under finite coproducts which exist in $\EuScript{E}$ (since sifted colimits commute with finite products in $\EuScript{S}$); its objects are called \emph{strongly of finite presentation} (or \emph{compact projective}, when $\EuScript{E}$ has all colimits). The inclusion functor $\EuScript{E}^{\rm sfp}\hookrightarrow\EuScript{E}$ corresponds to a fully faithful sifted colimit preserving functor $\operatorname{sInd}(\EuScript{E}^{\rm sfp})\to\EuScript{E}$ which is identity on $\EuScript{E}^{\rm sfp}$ and in general is given by taking sifted colimits in $\EuScript{E}$. If $\EuScript{E}$ is an $(n, 1)$-category, then for any $a\in\EuScript{E}^{\rm sfp}$, the functor $\operatorname{Map}_{\EuScript{E}}(a, -)=\tau_{\leqslant n-1}\operatorname{Map}_{\EuScript{E}}(a, -)\colon\EuScript{E}\to\tau_{\leqslant n-1}\EuScript{S}$ preserves all $n$-sifted colimits (by \Cref{n-siftcol-pres}).
	\end{enumerate}
\end{example}
\begin{prop}
	Let $\EuScript{C}$ be a small $\infty$-category and let $X\in\operatorname{sInd}(\EuScript{C})$, then the $\infty$-category $\EuScript{C}_{/X}$ is sifted.
\end{prop}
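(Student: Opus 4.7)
The plan is to read off the conclusion from the description of $\operatorname{sInd}$ already recorded in this section, once we match two classes of $\infty$-categories: the filtering class $\mathcal{F}=\operatorname{Filt}_{\operatorname{Set}^{<\omega}}$ used to define $\operatorname{sInd}(\EuScript{C})$, and the class of sifted $\infty$-categories. So I would not attempt a direct cofinality argument on the diagonal of $\EuScript{C}_{/X}$; instead, I would exploit the earlier characterisation \Cref{sift-finprod} of siftedness.

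First I would recall that $\operatorname{sInd}(\EuScript{C})=\operatorname{PSh}^{\mathcal{F}}(\EuScript{C})$ with $\mathcal{F}=\operatorname{Filt}_{\operatorname{Set}^{<\omega}}$, which by the definition of $\operatorname{Filt}_{\mathcal{U}}$ (noting $(\operatorname{Set}^{<\omega})^{\rm op}=\operatorname{Set}^{<\omega}$) consists of exactly those small $\infty$-categories $J$ for which $\operatorname{colim}\colon\operatorname{Fun}(J,\EuScript{S})\to\EuScript{S}$ preserves finite products. By \Cref{sift-finprod}, this is precisely the class of (non-empty) sifted $\infty$-categories. In other words, $\mathcal{F}$ is literally the sifted class; this matching is the one input that needs to be verified carefully, and it follows immediately from \Cref{sift-finprod} applied to the value category $\EuScript{S}=\tau_{\leqslant\infty-1}\EuScript{S}$.

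Next I would invoke the slice-criterion for membership in $\operatorname{PSh}^{\mathcal{F}}(\EuScript{C})$ when $\mathcal{F}$ is filtering (which is the case here, see \cite[\S 10.5]{Rez22}): $X\in\EuScript{P}(\EuScript{C})$ belongs to $\operatorname{PSh}^{\mathcal{F}}(\EuScript{C})$ if and only if the slice $\infty$-category $\EuScript{C}_{/X}$ belongs to $\mathcal{F}$. This is exactly the principle from \cite[\S 4.2]{Rez22} already used in the proof of \Cref{colim-yon}~(3) (there in the ``$\Leftarrow$'' direction; we need the ``$\Rightarrow$'' direction, which is the same statement). Combining with the previous paragraph yields the proposition: $X\in\operatorname{sInd}(\EuScript{C})\Longleftrightarrow\EuScript{C}_{/X}\in\mathcal{F}\Longleftrightarrow\EuScript{C}_{/X}$ is sifted.

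The only genuine obstacle is being confident in the slice-criterion; once that is quoted from \cite{Rez22}, the proof is essentially tautological. As a sanity check (and an alternative route if one prefers to avoid the black box), one can write $X\simeq\operatorname{colim}_{j\in J}h_{c_j}$ with $J$ sifted, observe via unstraightening that $\EuScript{C}_{/X}$ is the corresponding colimit of the right fibrations $\EuScript{C}_{/c_j}\to\EuScript{C}$, note that each $\EuScript{C}_{/c_j}$ has a terminal object and is therefore $\infty$-sifted, and then verify the finite-product criterion \Cref{sift-finprod} for $\EuScript{C}_{/X}$ by iterating the decomposition result \Cref{lim-decomp}: sifted $J$-colimits and $\EuScript{C}_{/c_j}$-colimits each commute with finite products in $\EuScript{S}$, and these commutations assemble to give the required property for $\EuScript{C}_{/X}$.
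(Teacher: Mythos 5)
Your proposal is correct and takes essentially the same route as the paper, whose entire proof is to cite \cite[\S 4.2 and 10.8]{Rez22} for exactly the two facts you isolate: that $\operatorname{Filt}_{\operatorname{Set}^{<\omega}}$ is the class of sifted $\infty$-categories, and that membership of $X$ in $\operatorname{PSh}^{\mathcal{F}}(\EuScript{C})$ for a filtering class $\mathcal{F}$ is equivalent to $\EuScript{C}_{/X}\in\mathcal{F}$. You simply spell out the identification of the filtering class via \Cref{sift-finprod}, which the paper leaves implicit.
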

Again, this follows from \cite[\S 4.2 and 10.8]{Rez22}; in fact, $\EuScript{C}_{/X}$ admits all finite coproducts if $\EuScript{C}$ does (by \Cref{colim-in-cat-pts}).
\begin{prop}
	Let $\EuScript{C}$ be a small $\infty$-category, let $\kappa$ be a regular cardinal. Then the inclusions
	\[\EuScript{C}\hookrightarrow\operatorname{Ind}_{\kappa}(\EuScript{C})\hookrightarrow\operatorname{Ind}(\EuScript{C})\hookrightarrow\operatorname{sInd}(\EuScript{C})\]
	are left cofinal.
\end{prop}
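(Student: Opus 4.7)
The plan is to reduce the problem to showing that the composite inclusion $\EuScript{C}\hookrightarrow\operatorname{sInd}(\EuScript{C})$ is left cofinal; once this is in hand, the left cofinality of each of the three factor inclusions follows by formal bookkeeping with \Cref{ncof-comp} (2). Indeed, $\operatorname{Ind}_{\kappa}(\EuScript{C})$, $\operatorname{Ind}(\EuScript{C})$ and $\operatorname{sInd}(\EuScript{C})$ are all constructed as full subcategories of $\EuScript{P}(\EuScript{C})$, so every inclusion in the chain is a full embedding of $\infty$-categories. Applying \Cref{ncof-comp} (2) to the factorisation $\EuScript{C}\hookrightarrow\operatorname{Ind}(\EuScript{C})\hookrightarrow\operatorname{sInd}(\EuScript{C})$ shows both factors are left cofinal; applying it once more to $\EuScript{C}\hookrightarrow\operatorname{Ind}_{\kappa}(\EuScript{C})\hookrightarrow\operatorname{Ind}(\EuScript{C})$ handles the remaining two.

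To establish that $\EuScript{C}\hookrightarrow\operatorname{sInd}(\EuScript{C})$ is left cofinal one must verify that the slice $\EuScript{C}_{/X}$ is weakly contractible for every $X\in\operatorname{sInd}(\EuScript{C})$. The proposition immediately preceding the target statement already records (via \cite[\S 4.2 and 10.8]{Rez22}) that $\EuScript{C}_{/X}$ is sifted whenever $X\in\operatorname{sInd}(\EuScript{C})$. Hence the only genuine remaining input is that any sifted $\infty$-category $\EuScript{K}$ is weakly contractible, which is precisely the content, for $n=\infty$, of the remark immediately following \Cref{$n$-siftedness}: since $\EuScript{K}$ is non-empty and the diagonal $\delta\colon\EuScript{K}\to\EuScript{K}\times\EuScript{K}$ is right cofinal by definition, taking a Joyal fibrant replacement and applying \cite[Corollary 2.11 (2)]{COJ} yields $|\EuScript{K}|\simeq|\EuScript{K}_{c/}\times\EuScript{K}|$ for any chosen object $c$, from which weak contractibility of $|\EuScript{K}|$ drops out. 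I expect this last step to be the only nontrivial one in the whole proof; everything else is pure cancellation.

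A cleaner alternative avoiding any iterated application of the cancellation lemma would be to handle the three compositions $\EuScript{C}\hookrightarrow\operatorname{Ind}_{\kappa}(\EuScript{C})$, $\EuScript{C}\hookrightarrow\operatorname{Ind}(\EuScript{C})$, $\EuScript{C}\hookrightarrow\operatorname{sInd}(\EuScript{C})$ uniformly: in each case the Rezk characterisation $X\in\operatorname{PSh}^{\mathcal{F}}(\EuScript{C})\Leftrightarrow\EuScript{C}_{/X}\in\mathcal{F}$ identifies $\EuScript{C}_{/X}$ as $\kappa$-filtered, filtered, or sifted respectively, and each of these classes of $\infty$-categories is weakly contractible (standard for the filtered cases, and by the argument above for the sifted one). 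The three intermediate inclusions then follow exactly as before by \Cref{ncof-comp} (2) applied to the relevant factorisations through full embeddings.
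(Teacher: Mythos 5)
Your proposal is correct and follows essentially the same route as the paper: the paper likewise notes that $\EuScript{C}_{/X}$ is sifted, hence weakly contractible, for every $X\in\operatorname{sInd}(\EuScript{C})$, concludes that $\EuScript{C}\hookrightarrow\operatorname{sInd}(\EuScript{C})$ is left cofinal, and then invokes \Cref{ncof-comp} (2) to peel off the intermediate full embeddings. Your extra justification of ``sifted implies weakly contractible'' via the remark after \Cref{$n$-siftedness} is a welcome expansion of a step the paper leaves implicit.
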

\begin{proof}
	Let $X\in\operatorname{sInd}(\EuScript{C})$, then $\EuScript{C}_{/X}$ is sifted, hence  is weakly contractible. So $\EuScript{C}\hookrightarrow\operatorname{sInd}(\EuScript{C})$ is left cofinal. Now apply \Cref{ncof-comp} (2).
\end{proof}
	The same argument shows that the inclusion $\EuScript{C}\hookrightarrow\operatorname{PSh}^{\mathcal{F}}(\EuScript{C})$ is left cofinal, where $\mathcal{F}= \operatorname{Filt}_{\{\varnothing\}}$ is the class of weakly contractible $\infty$-categories (\cite[\S 10.10]{Rez22}). In fact, we have the following more general result.
\begin{prop}
Let $\mathcal{F}\subset\operatorname{Cat}_{\infty}$ be the class of small \emph{(weakly)} $n$-connective $\infty$-categories.
\begin{enumerate}[label=\myparen{\arabic*}]
	\item The class $\mathcal{F}$ is a filtering class.
	\item For any small $\infty$-category $\EuScript{C}$, the Yoneda embedding $h\colon\EuScript{C}\hookrightarrow\operatorname{PSh}^{\mathcal{F}}(\EuScript{C})$ is left $n$-cofinal, and $\operatorname{PSh}^{\mathcal{F}}(\EuScript{C})$ is the \emph{largest} full subcategory of $\EuScript{P}(\EuScript{C})$ which contains $\EuScript{C}$ as a left $n$-cofinal subcategory \emph{(and any other full subcategory in between also does)}.
\end{enumerate}
\end{prop}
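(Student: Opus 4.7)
The plan splits into the two parts, with (1) as the technical core and (2) essentially formal once (1) is in place. For (1), I would exhibit $\mathcal{F}$ as $\operatorname{Filt}_{\mathcal{U}}$ for a well-chosen class $\mathcal{U}\subset\operatorname{Cat}_{\infty}$, after which filtering-ness is automatic from \cite[\S 10.5]{Rez22} (quoted just before the theorem). This generalises the $n=\infty$ case already noted, where $\mathcal{U}=\{\varnothing\}$, so that preservation of $\mathcal{U}^{\rm op}$-limits by $\operatorname{colim}_J$ amounts to preservation of the terminal object, i.e.\ $|J|\simeq *$. For finite $n$, the translation is that $J$ is $n$-connective iff $\tau_{\leqslant n-1}|J|\simeq *$, and this admits an equivalent phrasing in terms of $\operatorname{colim}_J$ commuting with limits over a suitable small class of $(n-1)$-truncated test categories (via the adjunction $\tau_{\leqslant n-1}\dashv\iota$ and the representability of $(n-1)$-truncation). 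As a fallback, one can verify the intrinsic closure axioms of a filtering class directly: $n$-connectivity of $J$ depends only on $|J|$ and is therefore invariant under categorical equivalence, while the remaining operations on $\infty$-groupoids respect the $n$-connective/$(n-1)$-truncated factorisation.

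For (2), the key input is the characterisation
\[\operatorname{PSh}^{\mathcal{F}}(\EuScript{C})=\{X\in\EuScript{P}(\EuScript{C}):\EuScript{C}_{/X}\in\mathcal{F}\}\]
from \cite[\S 4.2]{Rez22}, already used in \Cref{colim-yon}(3). With $\mathcal{F}$ the class of $n$-connective $\infty$-categories, this reads: $X\in\operatorname{PSh}^{\mathcal{F}}(\EuScript{C})$ iff $\EuScript{C}_{/X}$ is $n$-connective, which is precisely the condition for the Yoneda embedding $h$ to be left $n$-cofinal (by fullness of $\operatorname{PSh}^{\mathcal{F}}(\EuScript{C})\subset\EuScript{P}(\EuScript{C})$, the slice $\EuScript{C}_{/X}$ does not depend on which ambient $\infty$-category we use to form it). For maximality, if $\EuScript{C}\subset\EuScript{D}\subset\EuScript{P}(\EuScript{C})$ is a full subcategory with $\EuScript{C}\hookrightarrow\EuScript{D}$ left $n$-cofinal, the same fullness gives $\EuScript{C}\times_{\EuScript{D}}\EuScript{D}_{/X}\simeq\EuScript{C}\times_{\EuScript{P}(\EuScript{C})}\EuScript{P}(\EuScript{C})_{/X}=\EuScript{C}_{/X}$ for every $X\in\EuScript{D}$, hence $\EuScript{C}_{/X}$ is $n$-connective and $X\in\operatorname{PSh}^{\mathcal{F}}(\EuScript{C})$. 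The parenthetical refinement is automatic: for any intermediate full $\EuScript{C}\subset\EuScript{D}'\subset\operatorname{PSh}^{\mathcal{F}}(\EuScript{C})$, the slice of $\EuScript{C}$ over any $X\in\EuScript{D}'$ (computed in $\EuScript{D}'$ or in $\operatorname{PSh}^{\mathcal{F}}(\EuScript{C})$, the same thing by fullness) is $n$-connective by the already-proved left $n$-cofinality of $\EuScript{C}\hookrightarrow\operatorname{PSh}^{\mathcal{F}}(\EuScript{C})$.

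The main obstacle is pinning down $\mathcal{U}$ in (1) with the correct smallness to capture $n$-connectivity, or alternatively matching Rezk's abstract filtering axioms by hand; once that ingredient is in place, every step of (2) is a mechanical manipulation of slice categories under fullness.
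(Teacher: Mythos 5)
Part (2) of your proposal is correct and is essentially the paper's argument: once one knows that $\mathcal{F}$ is a filtering class, the identification $\operatorname{PSh}^{\mathcal{F}}(\EuScript{C})=\{X\in\EuScript{P}(\EuScript{C}):\EuScript{C}_{/X}\in\mathcal{F}\}$ (the inclusion $\supseteq$ holding for any class, the inclusion $\subseteq$ requiring filtering-ness) reduces everything to bookkeeping with slices under full embeddings, exactly as you do.

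The gap is in (1). Your primary route --- exhibiting the class of small $n$-connective $\infty$-categories as $\operatorname{Filt}_{\mathcal{U}}$ for some $\mathcal{U}$ --- cannot work for any finite $n\geqslant 0$. Indeed, $\operatorname{colim}\colon\operatorname{Fun}(\varnothing,\EuScript{S})\simeq\Delta^0\to\EuScript{S}$ picks out the initial object of $\EuScript{S}$, and the only $\mathcal{U}^{\rm op}$-limit it can fail to preserve is the empty one; hence $\varnothing\in\operatorname{Filt}_{\mathcal{U}}$ unless $\varnothing\in\mathcal{U}$, while if $\varnothing\in\mathcal{U}$ then every member of $\operatorname{Filt}_{\mathcal{U}}$ is weakly contractible. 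Since for each finite $n\geqslant 0$ the class of $n$-connective $\infty$-categories excludes $\varnothing$ yet contains non-weakly-contractible objects (e.g. $\Delta^0\sqcup\Delta^0$ for $n=0$, a category realising $S^1$ for $n=1$, etc.), it is not of the form $\operatorname{Filt}_{\mathcal{U}}$. (Note also that $\operatorname{Filt}_{\mathcal{U}}$ is defined via the colimit functor valued in $\EuScript{S}$, not in $\tau_{\leqslant n-1}\EuScript{S}$, so re-phrasing $n$-connectivity through truncated targets does not produce such a $\mathcal{U}$.) Your fallback is too vague to repair this: a filtering class in Rezk's sense is not specified by closure axioms but by the single condition that $\mathbf{1}\in\operatorname{PSh}^{\mathcal{F}}(\EuScript{C})$ forces $\EuScript{C}\in\mathcal{F}$, and your sketch never engages with it. The paper verifies precisely this condition: since each member of $\mathcal{F}$ is $n$-connective, the constant functor $\underline{*}\colon\operatorname{PSh}^{\mathcal{F}}(\EuScript{C})\to\tau_{\leqslant n-1}\EuScript{S}$ preserves $\mathcal{F}$-colimits (\Cref{cofintopt-cst}), hence is a left Kan extension of its restriction to $\EuScript{C}$ along the Yoneda embedding, so that $\tau_{\leqslant n-1}|\EuScript{C}|\simeq\operatorname{colim}_{\EuScript{C}}*\simeq\operatorname{colim}_{\operatorname{PSh}^{\mathcal{F}}(\EuScript{C})}*\simeq\underline{*}(\mathbf{1})=*$ whenever $\mathbf{1}$ lies in $\operatorname{PSh}^{\mathcal{F}}(\EuScript{C})$. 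This is the ingredient your proposal is missing.
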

\begin{proof}
	\begin{enumerate}[label=(\arabic*)]
		\item By definition, we have to show that if $\operatorname{PSh}^{\mathcal{F}}(\EuScript{C})$ contains the terminal presheaf $\mathbf{1}=\underline{*}$, then $\EuScript{C}$ is weakly $n$-connective. Indeed, by \Cref{cofintopt-cst}, $\mathbf{1}=\underline{*}$ viewed as a functor $\operatorname{PSh}^{\mathcal{F}}(\EuScript{C})\xrightarrow{*}\tau_{\leqslant n-1}\EuScript{S}$, preserves $\mathcal{F}$-colimits. So it is a left Kan extension of the functor $\EuScript{C}\xrightarrow{*}\tau_{\leqslant n-1}\EuScript{S}$ along the Yoneda embedding $h\colon\EuScript{C}\hookrightarrow\operatorname{PSh}^{\mathcal{F}}(\EuScript{C})$ (\cite[\S 3.3]{Rez22}). Thus they share the same colimit:
		\[\tau_{\leqslant n-1}|\EuScript{C}|\simeq\operatorname{colim}(\EuScript{C}\xrightarrow{*}\tau_{\leqslant n-1}\EuScript{S})\simeq\operatorname{colim}(\operatorname{PSh}^{\mathcal{F}}(\EuScript{C})\xrightarrow{*}\tau_{\leqslant n-1}\EuScript{S})\simeq*(\mathbf{1})=*\]
		as $\mathbf{1}$ is a terminal object of $\operatorname{PSh}^{\mathcal{F}}(\EuScript{C})$.
		\item Since $\mathcal{F}$ is a filtering class, we have $\EuScript{C}_{/X}\in\mathcal{F}$ for every $X\in\operatorname{PSh}^{\mathcal{F}}(\EuScript{C})$. Thus $h\colon\EuScript{C}\hookrightarrow\operatorname{PSh}^{\mathcal{F}}(\EuScript{C})$ is left $n$-cofinal. The rest statement is easy.
	\end{enumerate}
	
\end{proof}
\begin{prop}
Let $\EuScript{C}$ be a small $\infty$-category, let $\kappa$ be a regular cardinal.
\begin{enumerate}[label=\myparen{\arabic*}]
	\item The $\infty$-category $(\operatorname{Ind}_{\kappa}(\EuScript{C}))^{\kappa}\subset\EuScript{P}(\EuScript{C})$ consists of objects that are retracts of representable functors \myparen{retracts of objects in the image of the Yoneda embedding $h\colon\EuScript{C}\hookrightarrow\EuScript{P}(\EuScript{C})$}.
	
	If $\EuScript{C}$ has all $\kappa$-filtered colimits, then $(\operatorname{Ind}_{\kappa}(\EuScript{C}))^{\kappa}\subset\EuScript{P}(\EuScript{C})$ consists of representables.
	
	In particular, if $\EuScript{C}$ has all filtered colimits, then $(\operatorname{Ind}\EuScript{C})^{\rm fp}\subset\EuScript{P}(\EuScript{C})$ consists of representables.
	\item The $\infty$-category $(\operatorname{sInd}(\EuScript{C}))^{\rm sfp}\subset\EuScript{P}(\EuScript{C})$ consists of retracts of representable functors.
	
	If $\EuScript{C}$ has all small sifted colimits \myparen{or just $\kappa$-filtered colimits}, then $(\operatorname{sInd}(\EuScript{C}))^{\rm sfp}\subset\EuScript{P}(\EuScript{C})$ consists of representables.
\end{enumerate}
\end{prop}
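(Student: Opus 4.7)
The plan is to prove (1) and (2) in parallel, since the arguments are structurally identical; (2) follows (1) with ``$\kappa$-filtered'' replaced by ``sifted'' everywhere, using that $\operatorname{sInd}(\EuScript{C})\subset\EuScript{P}(\EuScript{C})$ is closed under sifted colimits, so they are computed pointwise in $\EuScript{S}$.

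For the first assertion of (1), I would first observe that each representable $h_c$ lies in $(\operatorname{Ind}_\kappa(\EuScript{C}))^{\kappa}$: by the Yoneda lemma, $\operatorname{Map}_{\operatorname{Ind}_\kappa(\EuScript{C})}(h_c,-)$ is evaluation at $c$, and $\kappa$-filtered colimits in $\operatorname{Ind}_\kappa(\EuScript{C})$ (formed in $\EuScript{P}(\EuScript{C})$) are pointwise, so this functor preserves them. The stability of colimit-preservation under retracts (recorded just above via \Cref{colretr}, applied to the corepresented functors) then shows that retracts of representables are $\kappa$-compact. For the converse, every $X\in\operatorname{Ind}_\kappa(\EuScript{C})$ admits a presentation $X\simeq\operatorname{colim}_{i\in I}h_{c_i}$ with $I$ a small $\kappa$-filtered diagram. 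If $X$ is $\kappa$-compact, then
\[
\operatorname{Map}(X,X)\simeq\operatorname{colim}_{i\in I}\operatorname{Map}(X,h_{c_i}),
\]
so $\operatorname{id}_X$ factors through some $h_{c_i}$, exhibiting $X$ as a retract of $h_{c_i}$.

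For the second sentence of (1), suppose now that $\EuScript{C}$ admits $\kappa$-filtered colimits. The universal property of $\operatorname{Ind}_\kappa(\EuScript{C})$ as the free $\kappa$-filtered cocompletion produces a functor $L\colon\operatorname{Ind}_\kappa(\EuScript{C})\to\EuScript{C}$ extending $\operatorname{id}_\EuScript{C}$ and preserving $\kappa$-filtered colimits; fully faithfulness of $h$ gives $L\circ h\simeq\operatorname{id}_\EuScript{C}$, and one checks $L\dashv h$. Given $X$ as a retract of $h_c$, the idempotent $e$ on $h_c$ splitting to $X$ is of the form $h(e')$ for a unique idempotent $e'\colon c\to c$ in $\EuScript{C}$ (by $Lh\simeq\operatorname{id}$, applying $L$ recovers $e'$), and the pair $LX\to c\to LX$ splits $e'$ in $\EuScript{C}$. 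Applying $h$, we obtain a splitting of $e$ on the object $h(LX)$. Since $\operatorname{Ind}_\kappa(\EuScript{C})$ is cocomplete (in particular admits sequential colimits), idempotent splittings there are essentially unique, so $X\simeq h(LX)$ is representable.

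The main obstacle is the bookkeeping in this last step: identifying the two idempotents on $h_c$ coming from $X$ and from $h(LX)$ as the same idempotent $h(e')$, and invoking uniqueness of idempotent splittings in $\operatorname{Ind}_\kappa(\EuScript{C})$. For (2), the same recipe applies: representables are strongly of finite presentation because sifted colimits in $\operatorname{sInd}(\EuScript{C})$ are pointwise; the identity-factorisation argument shows every $X\in(\operatorname{sInd}\EuScript{C})^{\rm sfp}$ is a retract of a representable; and when $\EuScript{C}$ admits small sifted colimits (or merely $\kappa$-filtered ones, which is already enough to split the relevant idempotents via the same left-adjoint extension from the universal property of $\operatorname{sInd}$ restricted to the $\kappa$-filtered subclass), the retract must itself be representable.
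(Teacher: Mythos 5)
Your proof is correct and, for the main assertion, follows the same route as the paper: representables are $\kappa$-compact (the paper says more — they are completely compact), $\kappa$-compacts are closed under retracts by \Cref{colretr}, and conversely a $\kappa$-compact $X$ written as a $\kappa$-filtered colimit of representables has $\operatorname{id}_X$ factoring through a term of the presentation, exhibiting $X$ as a retract of a representable. (The paper is slightly more careful here: it uses the canonical presentation over $\EuScript{C}_{/X}$ and cites Rezk to see that this indexing category is $\kappa$-filtered, resp.\ sifted in case (2).) Where you genuinely diverge is the final representability step: the paper simply invokes \cite[Corollary 4.4.5.16]{HTT} ($\EuScript{C}$ admits $\kappa$-filtered colimits $\Rightarrow$ $\EuScript{C}$ is idempotent complete) and concludes that the retract splits already in $\EuScript{C}$, whereas you build the colimit-extension $L\colon\operatorname{Ind}_\kappa(\EuScript{C})\to\EuScript{C}$ with $L\dashv h$ and $Lh\simeq\operatorname{id}$, apply $L$ to the retract diagram to split the (coherent) idempotent in $\EuScript{C}$, and transport back along $h$ using uniqueness of splittings. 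Your route is more self-contained and avoids the idempotent-completeness citation, at the cost of some bookkeeping with coherent idempotents; note that uniqueness of a splitting needs no cocompleteness (a splitting is both a limit and a colimit of the $\operatorname{Idem}$-diagram), so your parenthetical justification there is unnecessary. The one wobbly spot is the parenthetical case of (2) where $\EuScript{C}$ has only $\kappa$-filtered colimits: there is then no colimit-preserving extension $L$ defined on all of $\operatorname{sInd}(\EuScript{C})$, so your ``same left-adjoint extension'' phrasing does not literally apply; but you do not need $L$ globally — the idempotent lives on $h_c$ and transfers to $c$ by full faithfulness of $h$, where it splits because $\kappa$-filtered colimits suffice for idempotent completeness, which is exactly the paper's argument.
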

\begin{proof}
	Since objects in the image of the Yoneda embedding are completely compact, and $(\operatorname{Ind}_{\kappa}(\EuScript{C}))^{\kappa}$ is closed  under retracts in $\EuScript{P}(\EuScript{C})$, we find that retracts of representables are in $(\operatorname{Ind}_{\kappa}(\EuScript{C}))^{\kappa}$.
	
	Conversely, given $A\in(\operatorname{Ind}_{\kappa}(\EuScript{C}))^{\kappa}$. By \cite[\S 4.2]{Rez22}, the $\infty$-category $\EuScript{C}_{/A}$ is $\kappa$-filtered as $\mathcal{F}=\operatorname{Filt}_{\operatorname{Sm}_{\kappa}}$ is a filtering class (\cite[\S 10.7]{Rez22}). Since $A\simeq\operatorname{colim}_{(h_X\to A)\in\EuScript{C}_{/A}}h_X$, the relation
	\[\operatorname{id}_A\in\operatorname{Map}_{\EuScript{P}(\EuScript{C})}(A, A)\simeq\operatorname{colim}_{(h_X\to A)\in\EuScript{C}_{/A}}\operatorname{Map}_{\EuScript{P}(\EuScript{C})}(A, h_X)\]
	yields a retract $h_X\to A$, so $A$ is a retract of a representable. If $\EuScript{C}$ has all $\kappa$-filtered colimits, then by \cite[Corollary 4.4.5.16]{HTT}, $A$ is also representable.
	
	This proves (1); (2) has the same proof (using \cite[\S 10.8]{Rez22}).
\end{proof}
\begin{remark}
	Using \cite[\S 5.1 and 9.1]{Rez22}, one can prove that $(\operatorname{PSh}^{\mathcal{F}}(\EuScript{C}))^{\mathcal{F}\operatorname{-cpt}}\subset\EuScript{P}(\EuScript{C})$ consists of retracts of objects in the image of the Yoneda embedding $\EuScript{C}\hookrightarrow\EuScript{P}(\EuScript{C})$ for a class $\mathcal{F}\subset\operatorname{Cat}_{\infty}$ of small $\infty$-categories (in particular, it is essentially small); see also \cite[\S 11.1]{Rez21}. In particular, it is independent of the class $\mathcal{F}$. Cf. \cite[Lemma 2.6]{COJ}.
\end{remark}
\begin{prop}
Let $0\leqslant n\leqslant\infty$, let $\EuScript{E}$ be a cocomplete $(n, 1)$-category generated under colimits by $\EuScript{E}^{\rm sfp}$, i.e. $\EuScript{P}(\EuScript{E}^{\rm sfp})\xrightarrow{\sim}\EuScript{E}$. Then
\[
\EuScript{P}_{\Sigma}(\EuScript{E}^{\rm sfp})\simeq\operatorname{sInd}(\EuScript{E}^{\rm sfp})\xrightarrow{\sim}\EuScript{E}.\]

If $n<\infty$, we also have
\[\operatorname{Ind}(\EuScript{E}^{\rm fp})\xrightarrow{\sim}\EuScript{E}.\]
\end{prop}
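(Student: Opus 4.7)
The plan is to handle the two statements in turn. For the main equivalence $\operatorname{sInd}(\EuScript{E}^{\rm sfp})\xrightarrow{\sim}\EuScript{E}$, the universal property of $\operatorname{sInd}$ (using that $\EuScript{E}$ has sifted colimits) produces a canonical sifted-colimit-preserving extension $L\colon\operatorname{sInd}(\EuScript{E}^{\rm sfp})\to\EuScript{E}$ of the inclusion $\EuScript{E}^{\rm sfp}\hookrightarrow\EuScript{E}$. Because $\EuScript{E}^{\rm sfp}$ is closed under finite coproducts in $\EuScript{E}$, the inclusion preserves them; combined with preservation of sifted colimits and the standard fact that all colimits factor through sifted colimits of finite coproducts, $L$ preserves \emph{all} colimits.

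I would then verify full faithfulness of $L$ by decomposing $X\simeq\operatorname{colim}_\alpha h_{x_\alpha}$ as a sifted colimit of representables and showing that both $\operatorname{Map}_{\operatorname{sInd}(\EuScript{E}^{\rm sfp})}(X,Y)$ and $\operatorname{Map}_{\EuScript{E}}(LX,LY)$ compute the same limit $\lim_\alpha Y(x_\alpha)$: on the source side by Yoneda, and on the target side by presenting $LY$ itself as a sifted colimit of representables in $\EuScript{E}$ and invoking the sifted-compactness of each $x_\alpha\in\EuScript{E}^{\rm sfp}$. For essential surjectivity, the image of $L$ is a full subcategory of $\EuScript{E}$ closed under all colimits (by the previous paragraph) and containing $\EuScript{E}^{\rm sfp}$, so the generating hypothesis $\EuScript{P}(\EuScript{E}^{\rm sfp})\xrightarrow{\sim}\EuScript{E}$ forces this image to be all of $\EuScript{E}$.

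For the refinement $\operatorname{Ind}(\EuScript{E}^{\rm fp})\xrightarrow{\sim}\EuScript{E}$ when $n<\infty$, I will exploit the just-established $\EuScript{E}\simeq\operatorname{sInd}(\EuScript{E}^{\rm sfp})$ together with the $(n,1)$-structural decomposition from \Cref{nsift-colim-pres}: sifted colimits in $\EuScript{E}$ are generated by filtered colimits and $(\mathbf{\Delta}^{\leqslant n})^{\rm op}$-shaped colimits, and by the right $n$-cofinality of $(\mathbf{\Delta}_{\rm s}^{\leqslant n})^{\rm op}\hookrightarrow(\mathbf{\Delta}^{\leqslant n})^{\rm op}$ (\Cref{lcof-simp-inj-fin}, passed to opposites) the latter reduce to $(\mathbf{\Delta}_{\rm s}^{\leqslant n})^{\rm op}$-shaped colimits, which are genuinely \emph{finite} when $n<\infty$. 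Combined with $\EuScript{E}^{\rm sfp}\subseteq\EuScript{E}^{\rm fp}$ and the closure of $\EuScript{E}^{\rm fp}$ under finite colimits, every object of $\EuScript{E}$ becomes a filtered colimit of $\rm fp$-objects, giving essential surjectivity of $\operatorname{Ind}(\EuScript{E}^{\rm fp})\to\EuScript{E}$; full faithfulness is the analogous Yoneda computation with filtered-compactness replacing sifted-compactness. The main obstacle I anticipate is the bookkeeping that $L$ preserves \emph{all} colimits and not merely the sifted ones built into its construction, since essential surjectivity collapses without this, plus verifying the essential smallness of $\EuScript{E}^{\rm fp}$ so that $\operatorname{Ind}(\EuScript{E}^{\rm fp})$ is well-defined; the latter should follow from the description of $\EuScript{E}^{\rm fp}$ via retracts of finite colimits of objects in $\EuScript{E}^{\rm sfp}$.
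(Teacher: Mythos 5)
Your argument for $\operatorname{sInd}(\EuScript{E}^{\rm sfp})\xrightarrow{\sim}\EuScript{E}$ is correct but follows a genuinely different, more formal route than the paper. You construct $L$ from the universal property, upgrade it to preserve all colimits (sifted colimits plus finite coproducts generate everything), check full faithfulness by the standard Yoneda/compactness computation, and then observe that the essential image is a colimit-closed full subcategory containing the generators. The paper instead gets essential surjectivity completely explicitly: by Main Theorem 1 (\Cref{colimits-simplicial}) every object of $\EuScript{E}\simeq\EuScript{P}(\EuScript{E}^{\rm sfp})$ is a geometric realisation of coproducts of objects of $\EuScript{E}^{\rm sfp}$, each coproduct is a filtered colimit of finite coproducts, and $\EuScript{E}^{\rm sfp}$ is closed under finite coproducts; full faithfulness is already provided by the first proposition of \S 8. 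Your version is cleaner as an abstract argument, but the paper's buys an explicit simplicial presentation that it then reuses for the second statement --- which is exactly where your proposal has a gap.

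For $\operatorname{Ind}(\EuScript{E}^{\rm fp})\xrightarrow{\sim}\EuScript{E}$ you cite \Cref{nsift-colim-pres} to say that sifted colimits ``are generated by'' filtered colimits and $(\mathbf{\Delta}^{\leqslant n})^{\rm op}$-shaped colimits and conclude that every object is a filtered colimit of fp-objects. But \Cref{nsift-colim-pres} is a statement about which colimits a \emph{functor} must preserve; it does not decompose the colimit \emph{presentation} of an individual object. An arbitrary sifted diagram $J\to\EuScript{E}^{\rm sfp}$ carries no factorisation into filtered colimits of finite diagrams, so the inference from ``sifted colimit of sfp-objects'' to ``filtered colimit of fp-objects'' is precisely the missing step. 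The paper fills it by reshaping: truncate the Main Theorem 1 presentation to a $(\mathbf{\Delta}_{\rm s}^{\leqslant n})^{\rm op}$-shaped --- hence finite, as $n<\infty$ --- diagram of coproducts, filter this finite diagram by subdiagrams whose terms are finite sub-coproducts (so lie in $\EuScript{E}^{\rm sfp}$), note that each finite colimit of such a subdiagram lies in $\EuScript{E}^{\rm fp}$, and commute colimits to exhibit $X$ as the filtered colimit of these; then \cite[\S 9.2]{Rez22} concludes. (A formal alternative in the spirit of your first paragraph --- show $\operatorname{Ind}(\EuScript{E}^{\rm fp})\to\EuScript{E}$ is fully faithful and preserves all colimits, so its essential image is colimit-closed and contains the generators --- would also work, but it is not the argument you wrote.) Your concern about essential smallness of $\EuScript{E}^{\rm fp}$ is easily dispatched: $\EuScript{E}\simeq\EuScript{P}_{\Sigma}(\EuScript{E}^{\rm sfp})$ is presentable, so its compact objects form an essentially small subcategory.
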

\begin{proof}
	By our Main Theorem 1, each object $X$ in $\EuScript{E}$ is a geometric realisation of coproducts of objects in $\EuScript{E}^{\rm sfp}$. As any coproduct is a filtered colimit of finite coproducts, and since $\EuScript{E}^{\rm sfp}$ is closed under finite coproducts, we see that each object $X$ in $\EuScript{E}$ is in the sifted completion $\operatorname{sInd}(\EuScript{E}^{\rm sfp})$.
	
	On the other hand, as $\EuScript{E}$ is a cocomplete $(n, 1)$-category, each object $X$ in $\EuScript{E}$ is a $(\mathbf{\Delta}_{\rm s}^{\leqslant n})^{\rm op}$-shaped colimit of coproducts of objects in $\EuScript{E}^{\rm sfp}$. Since objects in $\EuScript{E}^{\rm sfp}$ are finitely presented, if $n<\infty$, we can filter this $(\mathbf{\Delta}_{\rm s}^{\leqslant n})^{\rm op}$-shaped diagram by $(\mathbf{\Delta}_{\rm s}^{\leqslant n})^{\rm op}$-shaped diagrams, each term is in $\EuScript{E}^{\rm sfp}$, so each (finite) colimit of such a diagram is in $\EuScript{E}^{\rm fp}$; the colimit of the resulting filtered diagram is $X$. By \cite[\S 9.2]{Rez22}, $\operatorname{Ind}(\EuScript{E}^{\rm fp})\xrightarrow{\sim}\EuScript{E}$.
\end{proof}
\begin{theorem}\label{yoneda-decomfun}
	Let $\mathcal{U}\subset\operatorname{Cat}_{\infty}$ be a class of small $\infty$-categories and let $\mathcal{F}:= \operatorname{Filt}_{\mathcal{U}}$. Let $\EuScript{C}$ be a small $\infty$-category admitting $\mathcal{U}$-colimits for which $\operatorname{PSh}^{\mathcal{F}}(\EuScript{C})=\operatorname{Fun}^{\rm \mathcal{U}^{\rm op}-lim}(\EuScript{C}^{\rm op}, \EuScript{S})\subset\EuScript{P}(\EuScript{C})$, and assume that $\operatorname{PSh}^{\mathcal{F}}(\EuScript{C})$ admits $\mathcal{U}$-colimits. Then for an  $\infty$-category $\EuScript{V}$ admitting $\mathcal{F}$-colimits, the restriction equivalence
	\[h^*\colon\operatorname{Fun}^{\rm \mathcal{F}-colim}(\operatorname{PSh}^{\mathcal{F}}(\EuScript{C}), \EuScript{V})\xrightarrow{\sim}\operatorname{Fun}(\EuScript{C}, \EuScript{V}), F\mapsto F\circ h\]
	restricts to an equivalence
	\[h^*\colon\operatorname{Fun}^{\rm \mathcal{F}\cup\mathcal{U}-colim}(\operatorname{PSh}^{\mathcal{F}}(\EuScript{C}), \EuScript{V})\xrightarrow{\sim}\operatorname{Fun}^{\rm \mathcal{U}-colim}(\EuScript{C}, \EuScript{V}).\]
	
	If $\operatorname{PSh}^{\mathcal{F}}(\EuScript{C})$ admits all small colimits, then $\operatorname{Fun}^{\rm \mathcal{F}\cup\mathcal{U}-colim}(\operatorname{PSh}^{\mathcal{F}}(\EuScript{C}), \EuScript{V})=\operatorname{Fun}^{\rm colim}(\operatorname{PSh}^{\mathcal{F}}(\EuScript{C}), \EuScript{V})$.
\end{theorem}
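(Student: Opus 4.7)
The plan is to combine the universal property of $\operatorname{PSh}^{\mathcal{F}}(\EuScript{C})$ as the free $\mathcal{F}$-cocompletion with the adjunction criterion established in \Cref{colim-yon} (4). The forward direction is immediate: for $F\in\operatorname{Fun}^{\mathcal{F}\cup\mathcal{U}-\operatorname{colim}}(\operatorname{PSh}^{\mathcal{F}}(\EuScript{C}), \EuScript{V})$, the composition $F\circ h$ preserves $\mathcal{U}$-colimits because the Yoneda functor $h$ does by \Cref{colim-yon} (2).

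For the backward direction I would first dispose of the case when $\EuScript{V}$ is cocomplete. Starting from $G\colon\EuScript{C}\to\EuScript{V}$ that preserves $\mathcal{U}$-colimits, I left Kan extend along the full Yoneda embedding $h\colon\EuScript{C}\hookrightarrow\EuScript{P}(\EuScript{C})$ to obtain a colimit-preserving functor $\tilde G\colon\EuScript{P}(\EuScript{C})\to\EuScript{V}$ with right adjoint $R=\operatorname{Map}_{\EuScript{V}}(G(-),-)$. Because $\tilde G\circ h=G$ preserves $\mathcal{U}$-colimits, \Cref{colim-yon} (4) yields that $R$ factors through $\operatorname{PSh}^{\mathcal{F}}(\EuScript{C})$ and produces a restricted adjunction $\tilde G|\colon\operatorname{PSh}^{\mathcal{F}}(\EuScript{C})\rightleftarrows\EuScript{V}$. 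As a left adjoint, $\tilde G|$ preserves every colimit that exists in $\operatorname{PSh}^{\mathcal{F}}(\EuScript{C})$; in particular it preserves $\mathcal{F}$-colimits and agrees with $G$ along $h$, so the uniqueness side of the main equivalence $h^*\colon\operatorname{Fun}^{\mathcal{F}-\operatorname{colim}}(\operatorname{PSh}^{\mathcal{F}}(\EuScript{C}), \EuScript{V})\simeq\operatorname{Fun}(\EuScript{C}, \EuScript{V})$ forces $\tilde G|\simeq F$. Hence $F$ preserves all small colimits, in particular $\mathcal{U}$-colimits.

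When $\EuScript{V}$ only admits $\mathcal{F}$-colimits, I reduce to the cocomplete situation by testing on mapping spaces. For each $e\in\EuScript{V}$ consider
\[\beta_e\colon\operatorname{PSh}^{\mathcal{F}}(\EuScript{C})\to\widehat{\EuScript{S}}^{\rm op},\qquad Y\mapsto\operatorname{Map}_{\EuScript{V}}(F(Y), e).\]
Both $F$ and $\operatorname{Map}_{\EuScript{V}}(-,e)\colon\EuScript{V}\to\widehat{\EuScript{S}}^{\rm op}$ preserve $\mathcal{F}$-colimits, so $\beta_e$ does; and $\beta_e\circ h=\operatorname{Map}_{\EuScript{V}}(G(-),e)$ preserves $\mathcal{U}$-colimits because $G$ does. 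Since $\widehat{\EuScript{S}}^{\rm op}$ is cocomplete (passing to a larger universe if needed), the cocomplete-target case above applies and shows that $\beta_e$ preserves $\mathcal{U}$-colimits. Varying $e$ and using that a cocone in $\EuScript{V}$ is a colimit cocone if and only if it becomes one after applying every $\operatorname{Map}_{\EuScript{V}}(-,e)$, I conclude that $F$ preserves $\mathcal{U}$-colimits.

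The final assertion follows from the same argument: in the cocomplete-target step $\tilde G|$ was a \emph{left adjoint} and therefore preserved \emph{all} colimits, not merely $\mathcal{U}$-colimits; so if $\operatorname{PSh}^{\mathcal{F}}(\EuScript{C})$ is cocomplete, the mapping-space reduction upgrades ``preserves $\mathcal{F}\cup\mathcal{U}$-colimits'' to ``preserves all small colimits'' for arbitrary $\EuScript{V}$. The main subtlety I anticipate is the identification $\tilde G|\simeq F$, which relies crucially on the uniqueness clause in the universal property of the free $\mathcal{F}$-cocompletion (cf. \cite[\S 3.3]{Rez22}); apart from that, minor bookkeeping about universes is needed when we enlarge the target to $\widehat{\EuScript{S}}^{\rm op}$.
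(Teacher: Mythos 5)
Your proof is correct and its engine is the same as the paper's: left Kan extend along the Yoneda embedding into $\EuScript{P}(\EuScript{C})$, invoke the adjoint functor theorem together with \Cref{colim-yon} (4) to restrict the adjunction to $\operatorname{PSh}^{\mathcal{F}}(\EuScript{C})$, conclude that the restricted functor is a left adjoint and hence preserves all colimits existing in $\operatorname{PSh}^{\mathcal{F}}(\EuScript{C})$, and identify it with $F$ via the uniqueness of $\mathcal{F}$-colimit-preserving extensions (the paper proves this identification by hand, checking the counit $h_!h^*(\theta\circ F)\to\theta\circ F$ on representables and propagating along $\mathcal{F}$-colimits, which is just an unwinding of the uniqueness clause you cite). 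The only genuine divergence is how the non-cocomplete target $\EuScript{V}$ is handled: the paper embeds $\EuScript{V}$ once and for all into a cocomplete, colimit-detecting $\EuScript{D}$ via \cite[Lemma 5.3.5.7]{HTT} and runs the argument for $\theta\circ F$, whereas you run it for each $\operatorname{Map}_{\EuScript{V}}(-,e)\circ F$ valued in $\widehat{\EuScript{S}}^{\rm op}$ and use that mapping spaces jointly detect colimits. Both work, but your choice of target is not locally small relative to the original universe, so the right adjoint produced by the adjoint functor theorem lands a priori in $\widehat{\EuScript{P}}(\EuScript{C})$ rather than $\EuScript{P}(\EuScript{C})$, and \Cref{colim-yon} (4) has to be re-run one universe up; this is routine but is slightly more than bookkeeping, and it is precisely what the paper's choice of $\EuScript{D}$ is designed to avoid.
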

\begin{proof}
	We adapt the proof of \cite[Proposition 5.5.8.15]{HTT}. By \Cref{colim-yon} (2), we obtain a restricted functor
	\[h^*\colon\operatorname{Fun}^{\rm \mathcal{F}\cup\mathcal{U}-colim}(\operatorname{PSh}^{\mathcal{F}}(\EuScript{C}), \EuScript{V})\to\operatorname{Fun}^{\rm \mathcal{U}-colim}(\EuScript{C}, \EuScript{V}), F\mapsto F\circ h.\]
	
	So let $F\in\operatorname{Fun}^{\rm \mathcal{F}-colim}(\operatorname{PSh}^{\mathcal{F}}(\EuScript{C}), \EuScript{V})$ with $F\circ h\in\operatorname{Fun}^{\rm \mathcal{U}-colim}(\EuScript{C}, \EuScript{V})$, we want to show
	\[F\in\operatorname{Fun}^{\rm \mathcal{F}\cup\mathcal{U}-colim}(\operatorname{PSh}^{\mathcal{F}}(\EuScript{C}), \EuScript{V}).\]
	
	By \cite[Lemma 5.3.5.7]{HTT}, we can find a full embedding $\theta\colon\EuScript{V}\to\EuScript{D}$ with $\EuScript{D}$ cocomplete, such that a small diagram $v\colon I^{\triangleright}\to\EuScript{V}$ is a colimit diagram if and only if $\theta\circ v\colon I^{\triangleright}\to\EuScript{D}$ is. By \cite[Lemma 5.1.5.5]{HTT}, we can find a colimit-preserving functor $G\colon\EuScript{P}(\EuScript{C})\to\EuScript{D}$ which is a left Kan extension of $\theta\circ F\circ h$ along $\iota\circ h\colon\EuScript{C}\to\EuScript{P}(\EuScript{C})$, i.e., $G\simeq(\iota\circ h)_!(\theta\circ F\circ h)$. The situation can be depicted as the following diagram:
	\[\begin{tikzcd}[column sep=4em]
		\EuScript{C} \ar[r, "F\circ h"] \ar[d, hook, "h"']  &  \EuScript{V} \arrow[r, "\theta"]	  &	\EuScript{D}.	\\
		\operatorname{PSh}^{\mathcal{F}}(\EuScript{C})  \ar[d, hook, "\iota"']   \ar[rru, bend right=15, "G\circ\iota"]   \ar[ru, bend right=10, "F"]     &   	&	\\
		\EuScript{P}(\EuScript{C}) \ar[rruu, bend right=20, "G"']&   	&
	\end{tikzcd}\]
	
	We have $G\circ\iota\simeq\iota^*(\iota\circ h)_!(\theta\circ F\circ h)\simeq h_!h^*(\theta\circ F)$ with a canonical natural transformation $\varepsilon\colon G\circ\iota\to\theta\circ F$; we have $h^*\varepsilon\colon h^*(G\circ\iota)\xrightarrow{\sim}h^*(\theta\circ F)$. Let $J$ denote the full subcategory of $\operatorname{PSh}^{\mathcal{F}}(\EuScript{C})$ on which $\varepsilon$ restricts to an equivalence (in $\operatorname{Fun}(J, \EuScript{D})$). Clearly $J$ contains the essential image of the Yoneda functor $h\colon\EuScript{C}\hookrightarrow\operatorname{PSh}^{\mathcal{F}}(\EuScript{C})$; also since both $G\circ\iota$ and $\theta\circ F$ preserve $\mathcal{F}$-colimits, we must have $J=\operatorname{PSh}^{\mathcal{F}}(\EuScript{C})$. Thus $\varepsilon\colon G\circ\iota\xrightarrow{\sim}\theta\circ F\in\operatorname{Fun}(\operatorname{PSh}^{\mathcal{F}}(\EuScript{C}), \EuScript{D})$, and so $G\circ(\iota\circ h)\simeq\theta\circ(F\circ h)$ preserves $\mathcal{U}$-colimits.
	
	The adjoint functor theorem (\cite[Remark 5.5.2.10]{HTT}) tells that $G$ is a left adjoint, whose right adjoint factors through $\operatorname{PSh}^{\mathcal{F}}(\EuScript{C})$ by \Cref{colim-yon} (4). So $\theta\circ F\simeq G\circ\iota\colon\operatorname{PSh}^{\mathcal{F}}(\EuScript{C})\to\EuScript{D}$ is a left adjoint, which thus preserves all colimits which exist in $\operatorname{PSh}^{\mathcal{F}}(\EuScript{C})$, and so does $F$.
\end{proof}
\begin{remark}
	Another possible way to prove the above result, using \Cref{lim-decomp}, is to show: if $\operatorname{PSh}^{\mathcal{F}}(\EuScript{C})$ admits $\mathcal{U}$-colimits, then the functor $\operatorname{PSh}^{\mathcal{F}}(\EuScript{C})\to\operatorname{Cat}_{\infty}, X\mapsto\EuScript{C}_{/X}$ (which classifies the cocartesian fibration $\EuScript{C}\times_{\operatorname{PSh}^{\mathcal{F}}(\EuScript{C})}\operatorname{PSh}^{\mathcal{F}}(\EuScript{C})^{\Delta^1}\to\operatorname{PSh}^{\mathcal{F}}(\EuScript{C})$, where the left hand side is the fibre product of $h\colon\EuScript{C}\hookrightarrow\operatorname{PSh}^{\mathcal{F}}(\EuScript{C})$ and $d_1\colon\operatorname{PSh}^{\mathcal{F}}(\EuScript{C})^{\Delta^1}\to\operatorname{PSh}^{\mathcal{F}}(\EuScript{C})$, and the map is the projection followed by $d_0$) preserves $\mathcal{U}$-colimits.
\end{remark}

\section{On descent}

In this section, we briefly discuss the general notion of descent. It looks off-topic, but fits well as a natural continuation of functor-fibration correspondences and (co)limits in $\operatorname{Cat}_{\infty}$. This of course suits best in the $\infty$-topos setting.

For an $\infty$-category $\EuScript{E}$, write $\operatorname{Cart}(\EuScript{E})$ for the non-full $\infty$-subcategory of $\EuScript{E}^{\Delta^1}$ consisting of cartesian squares as morphisms. If $\EuScript{E}$ admits fibered products, then $\operatorname{Cart}(\EuScript{E})$ is the non-full $\infty$-subcategory of $\EuScript{E}^{\Delta^1}$ consisting of $d_0$-cartesian edges, where $d_0\colon\EuScript{E}^{\Delta^1}\to\EuScript{E}$ sends an arrow in $\EuScript{E}$ to its target (see \Cref{ar-cart-fib}).
\begin{corollary}
	Let $\EuScript{E}$ be an $\infty$-category that admits colimits and fibered products. Then for any functor $F\colon K\to\EuScript{E}^{\mathrm{op}}$, we have
	\[\operatorname{lim}(K\xrightarrow{F}\EuScript{E}^{\mathrm{op}}\xrightarrow{F_{\EuScript{E}}}\operatorname{CAT}_{\infty})\simeq\operatorname{Fun}_{/\EuScript{E}^{\mathrm{op}}}(K, \operatorname{Cart}(\EuScript{E})^{\mathrm{op}})\in\operatorname{CAT}_{\infty},\]
	where $F_{\EuScript{E}}\colon\EuScript{E}^{\mathrm{op}}\to\operatorname{CAT}_{\infty}, c\mapsto\EuScript{E}_{/c}$. Objects in $\operatorname{lim}(K\xrightarrow{F}\EuScript{E}^{\mathrm{op}}\xrightarrow{F_{\EuScript{E}}}\operatorname{CAT}_{\infty})$ are given by lifts of the diagram
	\[
	\begin{tikzcd}[row sep=1.5em, column sep=3.2em]
		& \operatorname{Cart}(\EuScript{E})^{\mathrm{op}} \arrow[d, hook]\\
		& (\EuScript{E}^{\Delta^1})^{\mathrm{op}} \arrow[d, "d_0^{\mathrm{op}}"]\\
		K  \ar[r, "F"]  \ar[ruu, bend left, densely dotted, "A"]    &  \EuScript{E}^{\mathrm{op}}.
	\end{tikzcd}\]
\end{corollary}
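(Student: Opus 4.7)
The plan is to realise both sides as cartesian sections of the same fibration. The three key ingredients are Theorem~\ref{lim-catinf-op} (which computes $\operatorname{CAT}_\infty$-valued limits as cartesian sections), Example~\ref{ar-cart-fib} (identifying $F_\EuScript{E}$ with the cartesian fibration $d_0\colon\EuScript{E}^{\Delta^1}\to\EuScript{E}$, whose cartesian edges are exactly the cartesian squares in $\EuScript{E}$, i.e.\ the morphisms in $\operatorname{Cart}(\EuScript{E})$), and the base-change compatibility of unstraightening from Theorem~\ref{unsteq}.

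Writing $L:=K^{\rm op}$, the composite $\varphi:=F_\EuScript{E}\circ F\colon K\to\operatorname{CAT}_\infty$ is a functor $L^{\rm op}\to\operatorname{CAT}_\infty$. First I would apply Theorem~\ref{lim-catinf-op} to get
\[
\operatorname{lim}\varphi\simeq\operatorname{Fun}^{\rm cart}_{/L}(L,\; \textstyle\int^{L}\varphi).
\]
Next, the base-change compatibility in Theorem~\ref{unsteq} identifies the cartesian fibration $\int^{L}\varphi\to L$ with the pullback $L\times_\EuScript{E}\EuScript{E}^{\Delta^1}\to L$ formed along $F^{\rm op}\colon L\to\EuScript{E}$ and $d_0$; its cartesian edges are those projecting to $d_0$-cartesian edges, i.e.\ to cartesian squares in $\EuScript{E}$.

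It remains to observe that a cartesian section of $L\times_\EuScript{E}\EuScript{E}^{\Delta^1}\to L$ is the same datum as a functor $L\to\EuScript{E}^{\Delta^1}$ over $\EuScript{E}$ (via $F^{\rm op}$) that sends every edge to a cartesian square, equivalently a functor $L\to\operatorname{Cart}(\EuScript{E})$ over $\EuScript{E}$. Passing to opposites then gives a natural equivalence with functors $K\to\operatorname{Cart}(\EuScript{E})^{\rm op}$ over $\EuScript{E}^{\rm op}$, i.e.\ with lifts of $F$ through $d_0^{\rm op}\colon(\EuScript{E}^{\Delta^1})^{\rm op}\to\EuScript{E}^{\rm op}$ factoring through $\operatorname{Cart}(\EuScript{E})^{\rm op}$. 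This recovers the pictorial description of objects stated in the corollary.

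The main subtlety is the bookkeeping of opposites: one must match $F_\EuScript{E}\colon\EuScript{E}^{\rm op}\to\operatorname{CAT}_\infty$ with the ``cartesian fibrations over $\EuScript{E}$'' side of Theorem~\ref{unsteq} (not the ``cocartesian over $\EuScript{E}^{\rm op}$'' side), and correspondingly apply Theorem~\ref{lim-catinf-op} with base $L=K^{\rm op}$ rather than $K$. Once the dualisation is handled, nothing beyond base change and the definition of $\operatorname{Cart}(\EuScript{E})$ is required.
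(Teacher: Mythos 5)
Your route is essentially the paper's: identify $F_{\EuScript{E}}$ as the straightening of the arrow\--category fibration $d_0\colon\EuScript{E}^{\Delta^1}\to\EuScript{E}$ (\Cref{ar-cart-fib}), pull back along $F$ via the base-change compatibility of \Cref{unsteq}, and read off the limit as (co)cartesian sections. The only difference is which side of the duality you work on: you use the cartesian fibration over $\EuScript{E}$ together with \Cref{lim-catinf-op}, while the paper pulls back the cocartesian fibration $d_0^{\rm op}\colon(\EuScript{E}^{\Delta^1})^{\rm op}\to\EuScript{E}^{\rm op}$ along $F$ and uses \Cref{lim-catinf}. Your choice is in fact the cleaner one: the paper asserts that the pullback of $d_0^{\rm op}$ along $F$ is classified by $F_{\EuScript{E}}\circ F$, whereas by the ${\rm op}$-compatibility in \Cref{unsteq} it is classified by ${\rm op}\circ F_{\EuScript{E}}\circ F$ (its fibre over $i$ is $(\EuScript{E}_{/F(i)})^{\rm op}$); staying on the cartesian side over $\EuScript{E}$ avoids this.

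There is, however, a gap in your last step --- the same place where the paper's own proof is loose. What your argument actually produces is $\operatorname{Fun}^{\rm cart}_{/\EuScript{E}}(K^{\rm op},\EuScript{E}^{\Delta^1})$, the \emph{full} subcategory of $\operatorname{Fun}_{/\EuScript{E}}(K^{\rm op},\EuScript{E}^{\Delta^1})$ spanned by functors sending every edge of $K^{\rm op}$ to a cartesian square (morphisms of cartesian sections are arbitrary natural transformations over $\EuScript{E}$, not required to have cartesian components). This has the same objects as $\operatorname{Fun}_{/\EuScript{E}}(K^{\rm op},\operatorname{Cart}(\EuScript{E}))$ but more morphisms, so the two are not equivalent as $\infty$-categories; and ``passing to opposites'' turns $\operatorname{Fun}_{/\EuScript{E}}(K^{\rm op},\operatorname{Cart}(\EuScript{E}))$ into the \emph{opposite} of $\operatorname{Fun}_{/\EuScript{E}^{\rm op}}(K,\operatorname{Cart}(\EuScript{E})^{\rm op})$, not into that category itself. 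A test case makes the discrepancy visible: for $K=\Delta^0$ hitting $c$, the limit is $\EuScript{E}_{/c}$, whereas $\operatorname{Fun}_{/\EuScript{E}^{\rm op}}(\Delta^0,\operatorname{Cart}(\EuScript{E})^{\rm op})$ is the fibre of $\operatorname{Cart}(\EuScript{E})\to\EuScript{E}$ over $c$, whose morphisms are the cartesian squares over $\operatorname{id}_c$, i.e.\ equivalences. So your argument (like the paper's) correctly establishes the object-level description --- the ``lifts of the diagram'' picture --- but the displayed equivalence of $\infty$-categories needs the right-hand side to be read as the full subcategory of $\operatorname{Fun}_{/\EuScript{E}^{\rm op}}(K,(\EuScript{E}^{\Delta^1})^{\rm op})$ on the edgewise-cocartesian functors; you should flag this rather than pass it off as an ``observation.''
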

\begin{proof}
	Since the unstraightening equivalences commute with base change, we obtain a pullback square
	\[
	\begin{tikzcd}
		L \ar[r]    \ar[d]   \arrow[dr, phantom, "\pb", very near start]  & (\EuScript{E}^{\Delta^1})^{\mathrm{op}} \arrow[d, "d_0^{\mathrm{op}}"] \\
		K \ar[r, "F"]      & \EuScript{E}^{\mathrm{op}},
	\end{tikzcd}\]
	where the left vertical cocartesian fibration $L\to K$ is  classified by $K\xrightarrow{F}\EuScript{E}^{\mathrm{op}}\xrightarrow{F_{\EuScript{E}}}\operatorname{CAT}_{\infty}$. Thus
	\[\operatorname{lim}(K\xrightarrow{F}\EuScript{E}^{\mathrm{op}}\xrightarrow{F_{\EuScript{E}}}\operatorname{CAT}_{\infty})\simeq\operatorname{Fun}_{/K}^{\rm cocart}(K, L)\simeq\operatorname{Fun}_{/\EuScript{E}^{\mathrm{op}}}^{\rm cocart}(K, (\EuScript{E}^{\Delta^1})^{\mathrm{op}})\simeq\operatorname{Fun}_{/\EuScript{E}^{\mathrm{op}}}(K, \operatorname{Cart}(\EuScript{E})^{\mathrm{op}}),\]
	as desired.
\end{proof}
\begin{definition}[Descent]\label{abs-des}
	We say that an $\infty$-category $\EuScript{E}$ \emph{has descent} if $\operatorname{Cart}(\EuScript{E})$ has small colimits and the inclusion functor $\operatorname{Cart}(\EuScript{E})\hookrightarrow\EuScript{E}^{\Delta^1}$ preserves small colimits.
\end{definition}
\begin{remark}
	This formulation of having descent is due to Charles Rezk. An important result in higher topos theory says that every $\infty$-topos has descent (\cite[Theorem 6.1.3.9 and Proposition 6.1.3.10]{HTT}). Having descent is somehow complementary to the condition that colimits are universal (cf. \cite[Lemma 6.1.3.3 (5)]{HTT}).
\end{remark}
\begin{prop}\label{des-cat}
	Let $\EuScript{E}$ be an $\infty$-category that admits colimits and fibered products in which colimits are universal. Then $\EuScript{E}$ has descent if and only if the functor $F_{\EuScript{E}}\colon\EuScript{E}^{\mathrm{op}}\to\operatorname{CAT}_{\infty}, c\mapsto\EuScript{E}_{/c}$ preserves limits.
\end{prop}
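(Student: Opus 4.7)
The plan is to use the corollary preceding the proposition to reduce preservation of limits by $F_{\EuScript{E}}$ to a concrete statement about pullbacks and colimits, and then to match this with descent in Rezk's sense.

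Fix a colimit diagram $\bar{v}\colon J^{\triangleright} \to \EuScript{E}$ with $c = \operatorname{colim}_J v$. Applying the preceding corollary to $v^{\mathrm{op}}\colon J^{\mathrm{op}} \to \EuScript{E}^{\mathrm{op}}$ identifies
\[\operatorname{lim}_{J^{\mathrm{op}}}(F_{\EuScript{E}} \circ v^{\mathrm{op}}) \simeq \operatorname{Fun}_{/\EuScript{E}^{\mathrm{op}}}(J^{\mathrm{op}}, \operatorname{Cart}(\EuScript{E})^{\mathrm{op}}) \simeq \operatorname{Fun}_{/\EuScript{E}}(J, \operatorname{Cart}(\EuScript{E})),\]
whose objects are diagrams $(X_j \to v(j))_{j \in J}$ whose naturality squares are cartesian. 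Since $F_{\EuScript{E}}$ is classified by the cartesian fibration $d_0\colon \EuScript{E}^{\Delta^1} \to \EuScript{E}$ (\Cref{ar-cart-fib}), the canonical comparison map, induced by the cone $c \to v^{\mathrm{op}}$ in $\EuScript{E}^{\mathrm{op}}$, becomes the pullback functor
\[\Phi\colon \EuScript{E}_{/c} \longrightarrow \operatorname{Fun}_{/\EuScript{E}}(J, \operatorname{Cart}(\EuScript{E})), \qquad (Y \to c) \longmapsto (Y \times_c v(j) \to v(j))_{j \in J}.\]
Thus $F_{\EuScript{E}}$ preserves this limit if and only if $\Phi$ is an equivalence.

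Assuming descent, for any $D = (X_j \to v(j))_j$ viewed as a diagram $J \to \operatorname{Cart}(\EuScript{E})$, the colimit in $\operatorname{Cart}(\EuScript{E})$ is the pointwise colimit $(\operatorname{colim}_j X_j \to c)$, with each component square $(X_j \to \operatorname{colim} X,\, v(j) \to c)$ cartesian. This gives a candidate inverse $\Psi\colon D \mapsto (\operatorname{colim}_j X_j \to c)$; descent implies $\Phi \circ \Psi \simeq \operatorname{id}$, and universality of colimits implies $Y \simeq \operatorname{colim}_j (Y \times_c v(j))$, so $\Psi \circ \Phi \simeq \operatorname{id}$. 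Conversely, if $\Phi$ is an equivalence for every $v$, then given $D\colon J \to \operatorname{Cart}(\EuScript{E})$ with pointwise components $X \to Y$ and $c = \operatorname{colim} Y$, the object $\Phi^{-1}(D)$ is some $W \to c$ with $W \times_c Y_j \simeq X_j$; universality yields $W \simeq \operatorname{colim}_j (W \times_c Y_j) \simeq \operatorname{colim} X$, so the pointwise colimit lands in $\operatorname{Cart}(\EuScript{E})$. For any other cocone in $\operatorname{Cart}(\EuScript{E})$ at some $(Z \to W')$, the comparison square $(\operatorname{colim} X \to Z,\, c \to W')$ is again cartesian, because $Z \times_{W'} c \simeq \operatorname{colim}_j (Z \times_{W'} Y_j) \simeq \operatorname{colim}_j X_j$ by universality; hence the pointwise colimit is the colimit in $\operatorname{Cart}(\EuScript{E})$, i.e.\ $\EuScript{E}$ has descent.

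The main obstacle is to promote $\Phi$ (and $\Psi$) from an object-level pullback recipe to a genuine functor, and to identify it with the canonical comparison map provided by the universal property of limits in $\operatorname{CAT}_{\infty}$. I would do this by realising $\Phi$ as the restriction to cartesian sections of the pullback of $d_0\colon \EuScript{E}^{\Delta^1} \to \EuScript{E}$ along the cocone $\bar{v}\colon J^{\triangleright} \to \EuScript{E}$, so that its functoriality and the mutual inversion with $\Psi$ flow from compatibility of the unstraightening equivalence with base change, once verified on objects as above.
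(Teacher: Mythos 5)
Your proposal is correct and follows essentially the same route as the paper: identify the limit of $F_{\EuScript{E}}\circ v^{\mathrm{op}}$ with $\operatorname{Fun}_{/\EuScript{E}^{\mathrm{op}}}(J^{\mathrm{op}},\operatorname{Cart}(\EuScript{E})^{\mathrm{op}})$ via the preceding corollary, realise the comparison map as the pullback functor $\Phi$, use universality of colimits to produce the colimit functor $\Psi$ as a one-sided inverse, and identify the condition that it is a two-sided inverse with the descent condition. Your converse direction is if anything slightly more careful than the paper's (you check that the pointwise colimit actually satisfies the universal property in the non-full subcategory $\operatorname{Cart}(\EuScript{E})$), and the functoriality issue you flag at the end is handled no more explicitly in the paper than in your sketch.
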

\begin{proof}
	Let $F\colon K\to\EuScript{E}^{\mathrm{op}}, i\mapsto d_i$ be a functor with limit $d$ (i.e. $F$ extends to a limit diagram $\overline{F}\colon K^{\triangleleft}\to\EuScript{E}^{\mathrm{op}}$ with cone object $d$). Since colimits are universal in $\EuScript{E}$, the induced functor
	\[\EuScript{E}_{/d}\to\operatorname{lim}(K\xrightarrow{F}\EuScript{E}^{\mathrm{op}}\xrightarrow{F_{\EuScript{E}}}\operatorname{CAT}_{\infty})\simeq\operatorname{Fun}_{/\EuScript{E}^{\mathrm{op}}}(K, \operatorname{Cart}(\EuScript{E})^{\mathrm{op}})\]
	sending $c\to d$ to the diagram $K\to\operatorname{Cart}(\EuScript{E})^{\mathrm{op}}, i\mapsto(c\times_dd_i\to d_i)$ has a left-inverse given by sending a diagram $A\in\operatorname{Fun}_{/\EuScript{E}^{\mathrm{op}}}(K, \operatorname{Cart}(\EuScript{E})^{\mathrm{op}})$ to $\operatorname{colim}(K^{\mathrm{op}}\xrightarrow{A^{\rm op}}\operatorname{Cart}(\EuScript{E})\hookrightarrow\EuScript{E}^{\Delta^1})$.
	
	So $F_{\EuScript{E}}$ preserves limits if and only if it is also a right-inverse. This is the same as saying that for any diagram $A\colon K\to\operatorname{Cart}(\EuScript{E})^{\mathrm{op}}$, if we denote $\operatorname{colim}(K^{\mathrm{op}}\xrightarrow{A^{\rm op}}\operatorname{Cart}(\EuScript{E})\hookrightarrow\EuScript{E}^{\Delta^1})$ by $c\to d$, and write $A^{\rm op}(i)=:(c_i\to d_i)$, then for each $i\in K$, the square
	\[
	\begin{tikzcd}
		c_i \ar[r] \ar[d]  & c  \arrow[d]\\
		d_i  \ar[r]          & d
	\end{tikzcd}\]
	is cartesian in $\EuScript{E}$, which amounts to saying that ($\operatorname{Cart}(\EuScript{E})$ has small colimits and) $\operatorname{Cart}(\EuScript{E})\hookrightarrow\EuScript{E}^{\Delta^1}$ preserves small colimits, i.e. $\EuScript{E}$ has descent.
\end{proof}

\begin{definition}[Descent property for a section]\label{abs-des-sect}
	Let $\EuScript{C}$ be an $\infty$-category, let $\Phi\colon\EuScript{C}^{\mathrm{op}}\to\operatorname{Cat}_{\infty}$ be a functor classifying a cocartesian fibration $p\colon\EuScript{E}=\int_{\EuScript{C}^{\rm op}}\Phi\to\EuScript{C}^{\rm op}$.
	
	We say that a section $s\colon\EuScript{C}^{\rm op}\to\EuScript{E}$ of $p$ \emph{satisfies descent} if for each $c\in\EuScript{C}$ and $e\in \Phi(c)$, the functor
	\[
	\begin{aligned}
		F_{c, e}\colon(\EuScript{C}_{/c})^{\rm op}&\to\EuScript{S},\\
		(c'\xrightarrow{u}c)&\mapsto\operatorname{Map}_{\Phi(c')}(u^*e, s(c'))
	\end{aligned}\]
	preserves small limits. Here, for $u\colon c'\xrightarrow{h}c''\xrightarrow{v}c$, the map $F_{c, e}(h)$ is the composite
	\[\operatorname{Map}_{\Phi(c'')}(v^*e, s(c''))\xrightarrow{h^*}\operatorname{Map}_{\Phi(c')}(u^*e, h^*s(c''))\to\operatorname{Map}_{\Phi(c')}(u^*e, s(c')),\]
	where the latter map is induced by the canonical morphism $h^*s(c'')\to s(c')$.
	
	We say that an object $e\in \Phi(c)$ \emph{satisfies descent} if the cocartesian section $(c'\xrightarrow{u}c)\mapsto u^*e$ of the cocartesian fibration $p_{c/}\colon\EuScript{E}_{c/}\to(\EuScript{C}_{/c})^{\rm op}$ satisfies descent, i.e., for any morphism $u\colon c'\to c$ and object $e'\in \Phi(c')$, the functor
	\[
	\begin{aligned}
		F_{c', e'}\colon(\EuScript{C}_{/c'})^{\rm op}&\to\EuScript{S},\\
		(b\xrightarrow{v}c')&\mapsto\operatorname{Map}_{\Phi(b)}(v^*e', v^*u^*e)
	\end{aligned}\]
	preserves small limits.
\end{definition}
\begin{prop}
	Let $\EuScript{C}$ be an $\infty$-category, let $\Phi\colon\EuScript{C}^{\mathrm{op}}\to\operatorname{Cat}_{\infty}$ be a functor, let $c\in\EuScript{C}$ be an object. If an object $e\in \Phi(c)$ satisfies descent, then $u^*e\in\Phi(c')$ satisfies descent for every morphism $c'\xrightarrow{u}c$ in $\EuScript{C}$.
\end{prop}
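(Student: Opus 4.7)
The plan is to observe that the descent condition for $u^*e\in\Phi(c')$, tested at any morphism $w\colon c''\to c'$ and any $e''\in\Phi(c'')$, coincides with the descent condition for $e\in\Phi(c)$ tested at the composite morphism $u\circ w\colon c''\to c$ and the same object $e''$. Since the latter is assumed to preserve limits, so does the former, and $u^*e$ satisfies descent.

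First I would unwind \Cref{abs-des-sect}: to prove $u^*e$ satisfies descent, I must show that for every $w\colon c''\to c'$ and every $e''\in\Phi(c'')$, the functor
\[(\EuScript{C}_{/c''})^{\rm op}\to\EuScript{S},\qquad (b\xrightarrow{v}c'')\mapsto \operatorname{Map}_{\Phi(b)}(v^*e'',\, v^*w^*(u^*e))\]
preserves small limits. Using the (coherent) functoriality of $\Phi$, the natural equivalence $v^*w^*u^*e\simeq (u\circ w\circ v)^*e$ identifies this functor with the functor $F_{c'',e''}$ of \Cref{abs-des-sect} associated to $e\in\Phi(c)$, but with the morphism $u$ replaced by the composite $u\circ w\colon c''\to c$. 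As $e$ satisfies descent by hypothesis, this latter functor preserves small limits, and we conclude.

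The only part requiring care is checking that the above identification is natural in $v$, i.e. that the transition maps induced by a composable pair $b_1\to b_2\to c''$ agree on both sides under the equivalence $v^*w^*u^*e\simeq (u\circ w\circ v)^*e$. I would handle this by passing to the cocartesian fibration $p\colon\EuScript{E}\to\EuScript{C}^{\rm op}$ classified by $\Phi$: composition with $u$ defines a functor $(\EuScript{C}_{/c'})^{\rm op}\to(\EuScript{C}_{/c})^{\rm op}$, and the cocartesian section of $p_{c'/}$ associated with $u^*e$ is literally the restriction along this functor of the cocartesian section of $p_{c/}$ associated with $e$. Consequently the descent functor for $u^*e$ is obtained by precomposing the descent functor for $e$ with the evident change-of-slice functor $(\EuScript{C}_{/c''})^{\rm op}\to(\EuScript{C}_{/c})^{\rm op}$ induced by further composition, and limit preservation is automatically inherited. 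I do not expect a serious obstacle here; the argument is essentially bookkeeping about how cocartesian lifts compose.
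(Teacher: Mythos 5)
Your proposal is correct and is exactly the intended argument: the paper states this proposition without any proof, treating it as immediate from \Cref{abs-des-sect}, and your unwinding via the identification $v^*w^*u^*e\simeq(u\circ w\circ v)^*e$ --- made coherent by observing that the cocartesian section of $p_{c'/}$ determined by $u^*e$ is the restriction of the cocartesian section of $p_{c/}$ determined by $e$ along the composition functor $(\EuScript{C}_{/c'})^{\rm op}\to(\EuScript{C}_{/c})^{\rm op}$ --- is precisely the omitted verification. No gaps.
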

\begin{prop}[Descent property for a functor]
	Let $\EuScript{C}$ be an $\infty$-category. A constant functor $\Phi\colon\EuScript{C}^{\mathrm{op}}\to\operatorname{CAT}_{\infty}$ with value $\EuScript{D}$ classifies the cocartesian fibration $p\colon\EuScript{C}^{\rm op}\times\EuScript{D}\to\EuScript{C}^{\rm op}$. Any functor $F\colon\EuScript{C}^{\rm op}\to\EuScript{D}$ can be identified with a section of the cocartesian fibration $p$. It satisfies descent if and only if $F\in\operatorname{Fun}^{\rm lim}(\EuScript{C}^{\rm op}, \EuScript{D})$.
\end{prop}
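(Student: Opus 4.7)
The plan is to reduce the claim to two elementary ingredients: the fact that the coslice projection creates limits, and the Yoneda-style principle that limits in $\EuScript{D}$ are detected by mapping spaces. First I would unwind $F_{c,e}$ in the constant $\Phi$ case. Since $u^{*}e=e$ and $s(c')=F(c')$, and the ``canonical morphism'' $h^{*}s(c'')\to s(c')$ attached to $h\colon c'\to c''$ degenerates to $F(h)\colon F(c'')\to F(c')$, the functor $F_{c,e}$ is identified with the composite
\[(\EuScript{C}_{/c})^{\rm op}\;\simeq\;(\EuScript{C}^{\rm op})_{c/}\xrightarrow{\pi_{c}}\EuScript{C}^{\rm op}\xrightarrow{F}\EuScript{D}\xrightarrow{\operatorname{Map}_{\EuScript{D}}(e,-)}\EuScript{S},\]
where $\pi_{c}$ is the forgetful projection from the coslice. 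This identification is the crux of the proof, and once it is in place the two directions are each immediate.

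For the forward implication, assuming $F\in\operatorname{Fun}^{\rm lim}(\EuScript{C}^{\rm op},\EuScript{D})$, I would observe that $\pi_{c}$ creates small limits (this is the standard behaviour of coslice projections, e.g.\ \cite[\S 1.2.13, \S 4.4.2]{HTT}), that $F$ preserves them by hypothesis, and that $\operatorname{Map}_{\EuScript{D}}(e,-)$ always preserves limits. Hence $F_{c,e}$ preserves small limits for every $c\in\EuScript{C}$ and $e\in\EuScript{D}$, so the section corresponding to $F$ satisfies descent in the sense of \Cref{abs-des-sect}.

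For the converse, I would start with a limit diagram $\bar G\colon K^{\triangleleft}\to\EuScript{C}^{\rm op}$ with cone point $c$. The cone data lifts $\bar G$ canonically to $\bar G'\colon K^{\triangleleft}\to(\EuScript{C}^{\rm op})_{c/}$ with cone point $\operatorname{id}_{c}$, and creation of limits by $\pi_{c}$ makes $\bar G'$ itself a limit diagram. The descent hypothesis applied to $\bar G'$ then asserts that $F_{c,e}\circ\bar G'$ is a limit diagram in $\EuScript{S}$ for every $e\in\EuScript{D}$; but under the identification above this is precisely $\operatorname{Map}_{\EuScript{D}}(e,F\circ\bar G)$. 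Since this holds for all $e$, the Yoneda criterion (limits in $\EuScript{D}$ are detected after mapping in from all objects) forces $F\circ\bar G$ to be a limit diagram in $\EuScript{D}$, giving $F\in\operatorname{Fun}^{\rm lim}(\EuScript{C}^{\rm op},\EuScript{D})$.

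There is no substantive obstacle; the only points that ask for a moment's attention are (a) checking that the ``canonical morphism'' $h^{*}s(c'')\to s(c')$ really is $F(h)$ when $\Phi$ is the constant functor, so that $F_{c,e}$ factors as claimed, and (b) using the correct variance, namely that $(\EuScript{C}_{/c})^{\rm op}=(\EuScript{C}^{\rm op})_{c/}$ is an undercategory in $\EuScript{C}^{\rm op}$, which is the side on which the forgetful functor creates limits.
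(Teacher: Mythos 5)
Your proposal is correct and follows essentially the same route as the paper: unwind $F_{c,e}$ in the constant case to $\operatorname{Map}_{\EuScript{D}}(e,F(-))$ precomposed with the coslice projection, then use that limit cones in $\EuScript{C}^{\rm op}$ correspond to limit cones in $(\EuScript{C}^{\rm op})_{c/}$ and that limits in $\EuScript{D}$ are detected by mapping spaces. The paper's proof is just a terser version of this same unwinding, so no further comparison is needed.
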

\begin{proof}
	For any $c\in\EuScript{C}, d\in\EuScript{D}$, we have
	\[
	\begin{aligned}
		F_{c, d}\colon(\EuScript{C}_{/c})^{\rm op}&\to\EuScript{S},\\
		(c'\xrightarrow{u}c)&\mapsto\operatorname{Map}_{\EuScript{D}}(d, F(c')).
	\end{aligned}\]
	Given a colimit diagram $G\colon K^{\triangleright}\to\EuScript{C}$ with cone object $c$, we have $F_{c, d}(G(i)\to c)=\operatorname{Map}_{\EuScript{D}}(d, F(G(i))), \forall i\in K$. So $F_{c, d}$ preserves small limits for all $c\in\EuScript{C}, d\in\EuScript{D}$ if and only if $F\in\operatorname{Fun}^{\rm lim}(\EuScript{C}^{\rm op}, \EuScript{D})$.
\end{proof}
\begin{remark}
	By taking $\EuScript{D}=\operatorname{Cat}_{\infty}$, we observe (in light of \Cref{des-cat}) that if an $\infty$-category $\EuScript{E}$ admits colimits and fibered products in which colimits are universal, then the section $c\mapsto(c, \EuScript{E}_{/c})$ of the cocartesian fibration $\EuScript{E}^{\rm op}\times\operatorname{Cat}_{\infty}\to\EuScript{E}^{\rm op}$ satisfies descent if and only if the $\infty$-category $\EuScript{E}$ has descent. We see that the descent property for sections includes that for $\infty$-categories in \Cref{abs-des} as a special case. See \cite{BHoNorm} for a concise description in the case $\EuScript{C}$ is the $\infty$-topos associated to a Grothendieck site.
	
	In geometric situations, \cite[Corollary 4.7.5.3]{HA} gives a nice descent criterion for $\operatorname{Cat}_{\infty}$-valued cosimplicial diagrams that are very useful in practice.
\end{remark}

\providecommand{\bysame}{\leavevmode\hbox to3em{\hrulefill}\thinspace}
\providecommand{\MR}{\relax\ifhmode\unskip\space\fi MR }
\providecommand{\MRhref}[2]{%
  \href{http://www.ams.org/mathscinet-getitem?mr=#1}{#2}
}
\providecommand{\href}[2]{#2}

\vspace{5mm}

\noindent{\textsc{School of Mathematical Sciences\\ The University of Nottingham\\ University Park\\Nottingham, NG7 2RD}}

\vspace{5mm}

\noindent{Email: pengdudp@gmail.com}

\end{document}